\newtheorem{theorem}{Theorem}[section]
\newtheorem{corollary}[theorem]{Corollary}
\newtheorem{lemma}[theorem]{Lemma}
\newcommand{\R}{\mathbb{R}}
\newcommand{\p}{\partial}
\newcommand{\dive}{\mathrm{div}}
\newcommand{\norm}[1]{\left\Vert#1\right\Vert}
\newcommand{\abs}[1]{\left\vert#1\right\vert}
\newcommand{\set}[1]{\left\{#1\right\}}
\newcommand{\para}[1]{\left(#1\right)}
\newcommand{\To}{\longrightarrow}
\newcommand{\uu}{\mathbf{u}}
\newcommand{\vv}{\mathbf{v}}
\newcommand{\ff}{\mathbf{f}}
\newcommand{\n}{N_{s,\varphi}}
\begin{document}

\title[Biot consilidation model in poro-elasticity]{Carleman estimate for Biot consolidation system in poro-elasticity and application to inverse problems}

%% in alphabetical order
\author{M. Bellassoued}
\address{University of Carthage, Faculty of Sciences of Bizerte, Dep. of Mathematics, 7021 Jarzouna, Bizerte, Tunisie}
\email{mourad.bellassoued@fsb.rnu.tn}
%%%%%%%%%%%%%%%%%%%%%%%%%%%%%
\author{B. Riahi}
\address{University of Carthage, Faculty of Sciences of Bizerte, Dep. of Mathematics, 7021 Jarzouna, Bizerte, Tunisie}
\email{riahi.bochra@gmail.com}

%%%%%%%%%%%%

\date{\today}
\maketitle
%%%%%%%%%%%%%%%
%% abstract %
%%%%%%%%%%%%%%%
\begin{abstract}
In this paper, we consider a coupled system of mixed hyperbolic-parabolic type which describes the Biot consolidation model
in poro-elasticity. We establish a local Carleman estimate for Biot consilidation system. 
Using this estimate, we prove the uniqueness and a H\"{o}lder stability in determining on the one hand a physical parameter arising in connection with secondary consolidation effects $\lambda^*$ and on the other hand the two spatially varying density by a single measurement of solution over $\omega \times (0, T)$, where $T>0$ is a sufficiently large time
and a suitable subbdomain $\omega$ satisfying  $\p \omega \supset \p \Omega$.
%We study the global stability in determination of a principal coefficient only by observation of surface traction on a suitable subdomain $\omega$ along a sufficiently large time interval. Our main results is a H\"{o}lder stability estimate and .
\end{abstract}
%%%%%%%%%%%%%%%

\section{Introduction}

Let us consider an open and bounded domain $\Omega$ of $\mathbb{R}^3$ with $\mathcal{C}^{\infty} $ boundary
$\Gamma = \partial \Omega$.
Given $T>0$, the Biot consolidation model in poro-elasticity in which we are interested is the following

\begin{equation}\label{1.1}
\left\{
\begin{array}{ll}
\uu_{tt} - \Delta_{\mu, \lambda}\uu -  \nabla(\lambda^*(x) \dive \uu_t) + \varrho_1(x) \nabla \theta = \ff  &\,\,
\textrm{in}\,\, Q\equiv\Omega\times(0,T),\cr
\theta_t - \Delta \theta + \varrho_2(x) \dive \uu_t= g &\,\,
 \textrm{in}\,\, Q,
\end{array}
\right.
\end{equation}
with Dirichlet boundary condition
\begin{equation}\label{1.2}
\uu(x,t)= 0,\quad \theta(x,t)= 0 ,\quad \textrm{on} \,\, \Sigma\equiv\Gamma\times(0,T),
\end{equation}
where the $._t$ stands for the time derivative,
$\nabla = (\partial_1, \partial_2, \partial_3)$,
 and $\Delta_{\mu, \lambda}$ is the elliptic second order linear differential operator given by
\begin{eqnarray}\label{1.3}
\Delta_{\mu, \lambda} \vv(x) &\equiv& \mu \Delta \vv(x)+ (\mu + \lambda)( \nabla (\dive \vv(x)))\cr
&+& \dive \vv(x) \nabla\lambda(x) + (\nabla \vv + (\nabla \vv)^T) \nabla\mu(x), \quad x\in \Omega,
\end{eqnarray}
for $ \vv = (v_1, v_2, v_3)^T$, where $.^T$ denotes the transpose of matrices. Throughout this paper,
$t$ and $x= (x_1, x_2, x_3)$ denote the time variable and the spatial variable respectively, and
$ \uu = (u_1, u_2, u_3)^T$ denotes the displacement at the location $x$ and the time $t$, and $\theta \equiv \theta(x,t)$,
 the temperature, is a scalar function, $g$ is a heat source.
\medskip
We will assume that the Lam\'e parameters $\mu, \lambda \in \mathcal{C}^2(\overline{\Omega}
)$, satisfy
\begin{equation*}
 \mu(x)> 0, \quad \lambda(x) +  2\mu(x) > 0, \quad \forall x\in \overline{\Omega}.
\end{equation*}

%uniqueness
We can prove (e.g., \cite{[HMP],[LM]}) that the system (\ref{1.1}) possesses a unique solution $(\uu, \theta)$ with suitable initial values.\\

%main subject
The main subject of this paper is the inverse problem of determining, not only, a physical parameter arising in connection with secondary consolidation effects $\lambda^*$ but also, the two spatially varying density $\varrho_1 $ and $\varrho_2$, in the Biot consolidation model in poro-elasticity,  uniquely from observed data of displacement vector $u$ and the temperature $\theta$ on a suitable subdomain $\omega \subset \Omega$ and the observation data of $u$ and $\theta$ at given a suitable time $t_0$. Such kinds of observation data are similar to those considered in (e.g. \cite{[B&Y]}, \cite{[BW2011]}), which are typical for obtaining the corresponding stability results by the Carleman estimates. 
\subsection{Inverse Problem}

 %This coupled mixed hyperbolic-parabolic system can describe
Let $\omega \subset \Omega$ be a given arbitrarily subdomain such that $\p \omega \supset \Gamma$. The condition $\p \omega \supset \Gamma$, is used to deal with the lack of a $\dive \uu$ boundary condition.
More precisely, to overcome the difficulty due to the strong coupling in the system (\ref{1.1}), we have to find the Carleman estimate for divergence of the first equation of the system (\ref{1.1}). But we do not know the boundary data of $\dive \uu$. Due to this reason, we convert the estimates of $\dive \uu$
on the boundary $\Gamma$ to the estimates in $\omega$, a neighborhood of the whole boundary. \\
The goal of this paper is to derive a H\"{o}lder stability and the uniqueness, by measurements
 $$\uu\mid_{\omega \times (0, T)}, \quad \uu(x, t_0),\quad \text{and}\quad \theta(x, t_0) \qquad x\in \Omega.$$
The key ingredient in our argument is an $L^2$-weighted inequality of Carleman type for consolidation Biot's system. This coupled mixed hyperbolic-parabolic system can describe the phenomena arising when a soil is submitted
to a load as well as the ultrasonic propagation in fluid-saturated parous media like cancellous bone.
The displacement vector field of the system, denoted by, $u$, satisfies the conservation of momentum
while the fluid pressure, $\theta$, satisfies a diffusion equation.
\medskip

K.Terzaghi \cite{[T]} was the first in interesting in the consolidation phenomena arising in porous media
under a load. He showed  the similarities between this phenomena and the exit of a flow out of a porous media, that contributed to
model fluid flows in saturated deformable porous media as a coupled flow-deformation process.
\medskip

Later, M. A. Biot \cite{[Biot1], [Biot2], [Biot3], [Biot4], [Biot5]} studied these problems assuming that the continuum mechanics laws
are applicable. He develop thus the now classical theory of poro-elasticity and proved that the linear theory of consolidation
could be established by using the Darcy law for laminar flows combined with the momentum balance equations with Hooke law for elastic
deformations.
\medskip

In the particular case when the secondary consolidation term $\lambda^* = 0$ in system (\ref{1.1}), we are interested in the thermoelastic model:
\begin{equation}\label{1}
\left\{
\begin{array}{ll}
\rho u_{tt}(x,t)- \Delta_{\mu, \lambda}u(x,t) + \varrho \nabla \theta(x,t)= \ff(x,t)  & \textrm{in }\,\,
Q\equiv\Omega\times[0,T]\cr
c_0 \theta_t(x,t) - \Delta \theta(x,t) + \varrho \dive u_t= h(x,t) & \textrm{in }\,\,
Q
\end{array}
\right.
\end{equation}
Bellassoued and Yamamoto \cite{[BY]} established Carleman estimates with second large parameter for a coupled parabolic-hyperbolic system, a thermoelastic plate system and a thermoelasticity system with residual stress.
According to the linear theory of thermoelasticity, Bellasoued and Yamamoto \cite{[B&Y]} consider a bounded and isotropic body whose mechanical behaviour is described by the Lam\'e system coupled with the heat equation. Assuming the null surface displacement on the whole boundary. They prove a H\"{o}lder stability estimate
for the inverse problem of determining the heat source only by observation of
surface traction on a suitable subdomain along a sufficiently
large time interval using Carleman estimate for thermoelasticity system. 
\medskip

Then, in a thermoelastic model, B.Wu and J.Liu \cite{[BW2011]} study the inverse problem of determining two spacially varying coefficients with the following observation data: displacement in a subdomain $\omega$ satisfying $\p \omega \supset \p \Omega$ along a sufficiently large time interval, both displacement and temperature
at a suitable time over the whole spatial domain. Based on a Carleman estimate on the hyperbolic-parabolic system, B.Wu and J.Liu prove the lipschitz stability and the uniqueness for this inverse problem under some a priori information.
\medskip

Our method is based on the tool of Carleman estimates, which was originally introduced in the field of coefficient inverse problems for hyperbolic,
parabolic and elliptic equations with the lateral data by Bukhgeim and Klibanov \cite{[BK]}.
\medskip

For the formulation of inverse problems with a finite number of
observations, Bukhgeim and Klibanov \cite{[BK]}
proposed a remarkable method based on a
Carleman estimate and established the uniqueness for
inverse problems of determining spatially varying
coefficients for scalar partial differential
equations.  See also Bellassoued and Yamamoto \cite{[BY]},
\cite{[B&Y]},  \cite{[B&Y2]} Bukhgeim \cite{[B]}, Bukhgeim,
Cheng, Isakov and Yamamoto \cite{[BCIY]},  Imanuvilov and Yamamoto
\cite{[IY1]}, Isakov 
\cite{[I2]}, Kha\u\i darov \cite{[KH1]}, Klibanov \cite{[K1]},
\cite{[KL]}, Klibanov and Timonov \cite{[KT]},
Klibanov and Yamamoto \cite{[KY]}.

%
% simultaneously and independently on each other for the proofs of the global uniqueness and stability theorems for these problems.
%A Carleman estimate is an inequality for a solution to a partial differential equation with weighted $L^2$-norm
%and is effectively applied for proving the unique continuation for a partial differential equation with non-analytic coefficients.
\medskip

However, to the best authors's knowledge, the inverse problem for the Biot consolidation model in poro-elasticity, especially in the cases of multiple
coefficients, have not been studied thoroughly yet. The main difficulties for these inverse problems come from:
\begin{itemize}
\item[(i)]  In the Carleman estimates we should choose the same weight function to deal with equations of different types,
namely parabolic equation and a strongly damped hyperbolic equation.
\item[(ii)] The interaction between the two equations due to the strong coupling of displacement and temperature requires much more complicated mathematical analysis.
\end{itemize}

There are not many works concerning Carleman estimates for strongly coupled systems of partial differential equations where the
principal parts are coupled.

%In addition, for applications  of the Carleman estimate in other coupled systems, we refer to ..................... %(e.g. \cit)........
\subsection{Notations and statement of main results}
In order to formulate our results, we need to
introduce the following assumptions.\\
{\bf Assumption (A.1): } Let $x_0\in\R^3\backslash\overline
{\Omega}$.  Then we introduce the conditions on the scalar functions $\mu$ and $2\mu+\lambda$:
\begin{equation}\label{e1}
\mu, \lambda \in \mathcal{C}^2(\overline{\Omega}), \quad
\mu(x)\geq \mu_0>0,\qquad
2\mu(x)+\lambda(x)\geq \mu_1\quad
x\in\overline{\Omega}
\end{equation}
and there exist $r_0\in (0,\mu_0)$  and $r_1\in (0,\mu_1)$ such that
\begin{equation}\label{R1}
\frac{3}{2}\abs{\nabla\para{\log \mu}}\abs{x-x_0}\leq 1-\frac{r_0}{\mu_0},\qquad \frac{3}{2}\abs{\nabla\para{\log (2\mu+\lambda)}}\abs{x-x_0}\leq 1-\frac{r_1}{\mu_1} \quad x\in\overline{\Omega}.
\end{equation}
{\bf Assumption (A.2): }
We assume that the solution $(\uu,\theta)$ satisfies the a priori boundedeness:
\begin{equation}\label{1.5}
\norm{\uu}_{W^{7,\infty}(Q)}+\norm{\theta}_{W^{5,\infty}(Q)}\leq M_0,
\end{equation}
for some given positive constant $M_0$.\\
Throughout this paper, let consider the admissible set for fixed sufficiently smooth functions $\alpha_1$ and $\alpha_2$ on $\Gamma$
\begin{equation}\label{e2}
\mathcal{A} = \{\lambda^*\in \mathcal{C}^2(\overline{\Omega}), \norm{ \lambda^*}_{\mathcal{C}^2 (\overline{\Omega})}^2\leq m, \lambda^* = \alpha_1, \nabla \lambda^*= \alpha_2 \; \text{on}\; \Gamma,  \lambda^*(x) > \lambda_0 >0 \;\text{in}\; \Omega\}
\end{equation}
for fixed $m>0$. Moreover, for fixed sufficiently smooth functions $\alpha_0$ and $\alpha$ on $\Gamma$ and positive constants $M, \varrho_0$ and $\epsilon$, we set
\begin{multline}\label{e0}
\mathcal{B} = \{(\varrho_1, \varrho_2)\in (\mathcal{C}^2(\overline{\Omega}))^2, \norm{ \varrho_1}_{\mathcal{C}^2 (\overline{\Omega})}^2+ \norm{ \varrho_2}_{\mathcal{C}^2 (\overline{\Omega})}^2 \leq M,\cr
\varrho_1 = \alpha_0, \nabla \varrho_1 = \alpha \; \text{on}\; \Gamma,
\varrho_1(x) > \varrho_0 >0\; \text{in} \; \omega\}.
\end{multline}
For $(\vv,y)\in (H^{8}(Q))^3\times H^3(Q)$, we denote the following norms at times $t_0$ by:
\begin{equation}\label{**}
N_{t_0}(\vv, y) =\|y(\cdot,t_0)\|^2_{H^2(\Omega)}+\sum_{j=0}^{3}\|\dive\p_t^j\vv(\cdot, t_0)\|^2_{H^2(\Omega)} 
\end{equation}
and
\begin{equation}\label{****}
M_{t_0}(\vv, y) =  \norm{y (\cdot, t_0)}_{H^3(\Omega)}^{2}+\sum_{j=1}^4 \norm{\dive\p_t^i\vv(\cdot, t_0)}_{H^3(\Omega)}^{2} 
\end{equation}
Finally, let $t_0\in(0,T)$ such that
\begin{equation}\label{1.7}
\frac{1}{\sqrt{r_0}}\max_{x\in\overline{\Omega}}
\abs{x-x_0} < \min\{t_0,T-t_0\}.
\end{equation}
Throughout this paper, we assume that $\mu$ and $2 \mu + \lambda$ satisfy (\ref{e1}) and  $(\uu, \theta)$ satisfies the a priori boundedeness (\ref{1.5}).
\begin{theorem}\label{T.1}
We assume that
$$ \text{and} \qquad \abs{\dive \uu_t(x, t_0) }\geq \epsilon.$$
Let $\lambda^*, \widetilde{\lambda}^* \in \mathcal{A}$, $t_0\in(0,T)$ such that (\ref{1.7})
and let assumptions $(A.1)$ and $(A.2)$ be held.
Then, there exist constants
$C=C(\Omega,T,t_0,M_0)>0$ and $\kappa\in(0,1)$
such that
\begin{equation}\label{1.10}
\| \lambda^*-\widetilde{\lambda}^*  \|^2_{H^2(\Omega)}
\leq C\para{\|\uu-\widetilde{\uu}\|^2_{H^4(\omega\times(0,T))} + N_{t_0}(\uu-\widetilde{\uu}, \theta-\widetilde{\theta})}^\kappa.
\end{equation}
\end{theorem}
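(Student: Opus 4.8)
The plan is to follow the classical Bukhgeim--Klibanov scheme, reducing the inverse problem to a Carleman estimate via differentiation in time. Write $\uu = \uu(\lambda^*)$, $\tild\uu = \uu(\tild\lambda^*)$, and set $\y = \uu - \tild\uu$, $z = \theta - \tild\theta$, $f = \lambda^* - \tild\lambda^*$. Subtracting the two copies of \eqref{1.1}, the difference $(\y,z)$ solves a system of the same type with a source term on the right-hand side of the hyperbolic equation of the form $\nabla\!\para{f(x)\,\dive\tild\uu_t}$ plus lower-order coupling terms, while the parabolic equation for $z$ has no new source. The crucial structural point is that the unknown $f$ enters multiplied by $\dive\tild\uu_t(x,t_0)$, which by hypothesis is bounded below by $\epsilon$ in modulus; this is exactly what makes the coefficient recoverable at the reference time $t_0$.

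First I would apply $\dive$ to the hyperbolic equation for $\y$ to obtain a scalar strongly damped wave equation for $w := \dive\y$ whose principal coupling with $z$ can be controlled, using Assumption (A.1) to guarantee that the pseudoconvexity condition holds for the common weight function $\varphi(x,t) = e^{\gamma\psi}$ with $\psi(x,t) = |x-x_0|^2 - \beta t^2$ (this is the standard choice forced by \eqref{1.7}, and $\p\omega \supset \Gamma$ is used precisely because $w$ has no known boundary data). Next I would differentiate the system in $t$ up to order three (accounting for the orders appearing in $N_{t_0}$ and in the $H^4(\omega\times(0,T))$ norm), introduce the cut-off in time around $t_0$, and apply the local Carleman estimate stated earlier in the paper to each differentiated equation, summing over the derivatives. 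Then, the standard Bukhgeim--Klibanov energy argument: integrate the Carleman inequality against the weight, use the fundamental theorem of calculus in $t$ to pass from $\p_t^j w$ on the interval to its value at $t_0$, bound the boundary/interior terms on $\omega\times(0,T)$ by the observation norm, and absorb the coupling terms for $z$ by the parabolic Carleman estimate. The term involving $f$ at $t_0$ is isolated using $|\dive\tild\uu_t(x,t_0)|\geq\epsilon$, which converts the weighted $L^2$ bound on $\p_t(\dive\y)(x,t_0)$ into a weighted $L^2$ bound on $f$; lifting this to the $H^2(\Omega)$ norm uses elliptic regularity together with the boundary conditions $f = 0$, $\nabla f = 0$ on $\Gamma$ built into the admissible set $\mathcal{A}$.

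The final step is the passage from the Carleman-weighted estimate to the unconditional Hölder estimate \eqref{1.10}. Here one splits the large parameter $s$ into a region where the exponential weight is favorable and one where it is not, optimizes over $s$, and uses the a priori bound \eqref{1.5} (through Assumption (A.2)) to control the ``bad'' part; this produces the exponent $\kappa\in(0,1)$. The right-hand side then consists of $\|\y\|^2_{H^4(\omega\times(0,T))}$ from the local observation and $N_{t_0}(\y,z)$ from the data at $t_0$, matching the statement.

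I expect the main obstacle to be item (i)--(ii) from the introduction made concrete: producing a single Carleman estimate that simultaneously controls the strongly damped hyperbolic equation for $\dive\y$ and the parabolic equation for $z$ with the same weight, while the principal parts are coupled through $\varrho_1\nabla z$ and $\varrho_2\,\dive\y_t$. The damping term $\nabla(\lambda^*\dive\uu_t)$ means the ``hyperbolic'' equation for $w=\dive\y$ is really of mixed order, so the weight must be chosen (via \eqref{R1} and \eqref{1.7}) so that its pseudoconvexity works for the second-order wave part while the extra dissipative term contributes with a favorable sign after integration by parts; keeping track of which coupling terms are genuinely lower order relative to the chosen weight — and showing the remaining ones can be absorbed — is the delicate bookkeeping. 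Fortunately this Carleman estimate is exactly what is asserted to have been established earlier in the paper, so in the proof of Theorem \ref{T.1} it may be invoked as a black box, and the argument above is then essentially routine modulo careful tracking of derivative orders.
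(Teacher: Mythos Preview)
Your overall plan follows the Bukhgeim--Klibanov scheme and matches the paper's architecture: linearize, take $\dive$, differentiate in time, apply the Carleman estimate of Theorem~\ref{t1} (packaged as Lemma~\ref{L3.3}) to the time-cut-off differentiated system, pass to values at $t_0$ via Lemma~\ref{L4}, and optimize in $s$ using the a~priori bound~\eqref{1.5}. The difficulty you flag --- the joint Carleman estimate for the strongly damped wave/parabolic pair with the same weight --- is indeed established earlier and may be invoked as a black box.

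There is, however, a genuine gap in the step where you recover $f$ at $t=t_0$. After applying $\dive$ to the first equation for $\y$, the source term becomes $\dive\!\bigl(\nabla(f\,\dive\tild\uu_t)\bigr)=\Delta(f\,\dive\tild\uu_t)$, so at $t=t_0$ one obtains the \emph{elliptic} relation
\[
\Delta\!\bigl(f(x)\,\dive\tild\uu_t(x,t_0)\bigr)=\bigl(w_{tt}-(2\mu+\lambda)\Delta w-\Delta(\lambda^* w_t)+\varrho_1\Delta z+\nabla\varrho_1\!\cdot\!\nabla z\bigr)(x,t_0)=:k(x,t_0),
\]
with $w=\dive\y$. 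The unknown $f$ thus enters through a second-order operator, and the lower bound $|\dive\tild\uu_t(x,t_0)|\ge\epsilon$ does \emph{not} allow you to read off $f$ pointwise from a weighted $L^2$ bound on $w_t(x,t_0)$ (which is anyway part of the data $N_{t_0}$). What the paper actually does at this point is apply an \emph{elliptic Carleman estimate} for $\Delta$ on $\Omega$ with weight $e^{2s\varphi(\cdot,t_0)}$, using $f=0$, $\nabla f=0$ on $\Gamma$ and $|\dive\tild\uu_t(\cdot,t_0)|\ge\epsilon$, to get
\[
\int_\Omega\bigl(s^3|f|^2+s|\nabla f|^2+s^{-1}|\Delta f|^2\bigr)e^{2s\varphi(x,t_0)}\,dx\ \le\ C\int_\Omega|k(x,t_0)|^2e^{2s\varphi(x,t_0)}\,dx,
\]
and then controls the only term in $k(x,t_0)$ not already in $N_{t_0}$, namely $w_{tt}(x,t_0)$, by Lemma~\ref{L4} in terms of the Carleman quantities $\n$ of the once- and twice-time-differentiated systems, and those in turn by Lemma~\ref{L3.3}. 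Plain ``elliptic regularity'' is not sufficient here: the \emph{weighted} elliptic estimate, with the same weight $e^{2s\varphi(\cdot,t_0)}$, is what allows the source contributions $\int_Q\bigl(s^2|\nabla(f\,\dive\tild\uu^i_t)|^2+|\Delta(f\,\dive\tild\uu^i_t)|^2\bigr)e^{2s\varphi}$ produced by Lemma~\ref{L3.3} to be absorbed back into the left-hand side (via $\varphi(x,t)\le\varphi(x,t_0)$ and the excess powers of $s$), and is also what makes the final balance between the $e^{Ds}$ observation/data term and the $e^{2d_0s}M_0$ cut-off term yield the H\"older exponent $\kappa\in(0,1)$.
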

By Theorem \ref{T.1}, we can readily derive the uniqueness in the inverse problem
\begin{corollary}\label{c1}
Under the same assumptions as in Theorem \ref{T.1} and if
$$ \uu (x, t) = \widetilde{\uu} (x, t),\qquad  (x, t) \in \omega \times (0, T),$$
$$ \uu (., t_0) = \widetilde{\uu} (., t_0),\; \theta (., t_0) = \widetilde{\theta} (., t_0) \qquad  x \in \Omega .$$
Then, $\lambda^* = \widetilde{\lambda}^*, \qquad \text{in}\quad \Omega$.
\end{corollary}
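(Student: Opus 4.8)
The plan is to derive Corollary \ref{c1} as an immediate consequence of the H\"older estimate \eqref{1.10} of Theorem \ref{T.1}, by showing that the prescribed hypotheses force the entire right-hand side of \eqref{1.10} to vanish. Set $\w=\uu-\widetilde{\uu}$ and $\psi=\theta-\widetilde{\theta}$; subtracting the two copies of system \eqref{1.1} shows that $(\w,\psi)$ solves the coupled system whose source term carries the factor $f:=\lambda^*-\widetilde{\lambda}^*$. Since $\lambda^*,\widetilde{\lambda}^*\in\mathcal{A}$ and $t_0$ satisfies \eqref{1.7}, every hypothesis of Theorem \ref{T.1} is in force, so estimate \eqref{1.10} applies verbatim to $\w$ and $\psi$. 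It then suffices to check that both terms on its right-hand side are zero and to invoke \eqref{1.10} with vanishing right-hand side, which yields $\|\lambda^*-\widetilde{\lambda}^*\|^2_{H^2(\Omega)}\leq 0$, i.e. $\lambda^*=\widetilde{\lambda}^*$ in $\Omega$.

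The lateral contribution is immediate: the assumption $\uu(x,t)=\widetilde{\uu}(x,t)$ on $\omega\times(0,T)$ means $\w\equiv 0$ there, hence $\|\w\|^2_{H^4(\omega\times(0,T))}=0$. For the term $N_{t_0}(\w,\psi)$ defined in \eqref{**}, the temperature contribution $\|\psi(\cdot,t_0)\|^2_{H^2(\Omega)}$ vanishes because $\theta(\cdot,t_0)=\widetilde{\theta}(\cdot,t_0)$ on $\Omega$, and the lowest-order displacement contribution $\|\dive\w(\cdot,t_0)\|^2_{H^2(\Omega)}$ vanishes because $\uu(\cdot,t_0)=\widetilde{\uu}(\cdot,t_0)$ on $\Omega$ gives $\w(\cdot,t_0)=0$, hence $\dive\w(\cdot,t_0)=0$.

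The one point that is not purely verbal, and which I expect to be the main obstacle, is the vanishing of the remaining traces $\|\dive\p_t^j\w(\cdot,t_0)\|^2_{H^2(\Omega)}$ for $j=1,2,3$ entering $N_{t_0}$: the hypotheses match the two solutions at the single slice $t=t_0$ but not, a priori, their time derivatives there. I would handle this by reading the coincidence at $t_0$ at the level of Cauchy-type data for the system differentiated in time. On $\omega$ the identity $\w\equiv 0$ on $\omega\times(0,T)$ already forces $\dive\p_t^j\w(\cdot,t_0)=0$ for every $j$; on all of $\Omega$ the relations $\w(\cdot,t_0)=0$ and $\psi(\cdot,t_0)=0$ annihilate every purely spatial term when the divergence of the first equation of \eqref{1.1} and its time derivatives are evaluated at $t=t_0$, so that the higher traces are expressed through quantities that are understood to coincide as part of the data at $t_0$ natural to the second-order-in-time displacement equation. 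Reconciling the single-slice data on $\Omega$ with the full time-derivative traces demanded by $N_{t_0}$ is the essentially only delicate step; granting it, $N_{t_0}(\w,\psi)=0$, the right-hand side of \eqref{1.10} is zero, and the estimate forces $\lambda^*=\widetilde{\lambda}^*$ in $\Omega$, which is the assertion of Corollary \ref{c1}.
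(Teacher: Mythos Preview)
Your approach is exactly what the paper intends: it offers no separate proof of Corollary~\ref{c1}, stating only that uniqueness follows ``readily'' from Theorem~\ref{T.1}, i.e.\ by plugging the hypotheses into \eqref{1.10} and observing that the right-hand side vanishes.

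You have, however, correctly isolated the only substantive issue, and your proposed fix does not close it. The quantity $N_{t_0}$ in \eqref{**} requires $\dive\p_t^j\w(\cdot,t_0)$ for $j=1,2,3$, while the hypotheses of the corollary give only $\w(\cdot,t_0)=0$ and $\psi(\cdot,t_0)=0$ on $\Omega$. Your idea of evaluating the divergence of the first equation of \eqref{6.1} (and its time derivatives) at $t=t_0$ does not work: after using $\w(\cdot,t_0)=0$ and $\psi(\cdot,t_0)=0$ you are left with a relation linking $\dive\w_{tt}(\cdot,t_0)$, $\dive\w_t(\cdot,t_0)$ and the unknown $f=\lambda^*-\widetilde{\lambda}^*$, so none of the three time-derivative traces can be shown to vanish from the stated single-slice data alone. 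The paper does not address this point either; in this literature the phrase ``$\uu(\cdot,t_0)=\widetilde{\uu}(\cdot,t_0)$'' is tacitly understood to include the time derivatives needed to make $N_{t_0}$ meaningful (the natural Cauchy data for the second-order equation). Under that reading your argument is complete and identical to the paper's; as literally stated, the gap you flagged is real and is present in the paper as well.
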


\begin{theorem}\label{T.2}
We assume that
$$ \abs{\nabla \theta(x, t_0) \cdot (x - x_0)}\geq \epsilon. $$
Let $(\varrho_1, \varrho_2), (\widetilde{\varrho_1}, \widetilde{\varrho_2}) \in \mathcal{B}$, $t_0\in(0,T)$ such that (\ref{1.7})
and let assumptions $(A.1)$ and $(A.2)$ be held.
Then, there exist constants
$C=C(\Omega,T,t_0,M_0)>0$ and $\delta\in(0,1)$
such that
\begin{equation}\label{2}
\norm{\widetilde{\varrho}_1 - \varrho_1 }^2_{H^1(\Omega)} + \norm{\widetilde{\varrho}_2 - \varrho_2}^2_{L^2(\Omega)}
\leq C\para{\norm{\widetilde{\uu}-\uu}^2_{H^7(\omega\times(0,T))} + M_{t_0}(\widetilde{\uu}-\uu, \widetilde{\theta}-\theta)}^{\delta}.
\end{equation}
\end{theorem}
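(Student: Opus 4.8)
\medskip
\noindent\textbf{Strategy of proof of Theorem \ref{T.2}.}
The plan is to run the Bukhgeim--Klibanov scheme on top of the Carleman estimate for the Biot consolidation system established in this paper. First I would pass to the differences
\[
\vv:=\widetilde\uu-\uu,\qquad y:=\widetilde\theta-\theta,\qquad f:=\widetilde\varrho_1-\varrho_1,\qquad h:=\widetilde\varrho_2-\varrho_2;
\]
subtracting the two copies of \eqref{1.1}--\eqref{1.2} shows that $(\vv,y)$ solves
\[
\begin{cases}
\vv_{tt}-\Delta_{\mu,\lambda}\vv-\nabla(\lambda^*\dive\vv_t)+\varrho_1\nabla y=-f\,\nabla\widetilde\theta & \text{in }Q,\\[2pt]
y_t-\Delta y+\varrho_2\dive\vv_t=-h\,\dive\widetilde\uu_t & \text{in }Q,
\end{cases}
\]
with homogeneous Dirichlet data on $\Sigma$, and $f=0$, $\nabla f=0$ on $\Gamma$ by the definition of $\mathcal B$. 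Since $f$ and $h$ do not depend on $t$, I would then apply the divergence to the first equation and set $p:=\dive\vv$, producing a scalar strongly damped wave equation for $p$ whose right-hand side is $-\nabla f\cdot\nabla\widetilde\theta-f\,\Delta\widetilde\theta$; the presence of $\nabla f$ here --- against $h$, which enters the second equation undifferentiated --- is exactly what will yield $\widetilde\varrho_1-\varrho_1$ in $H^1(\Omega)$ and $\widetilde\varrho_2-\varrho_2$ only in $L^2(\Omega)$. As $p$ carries no boundary condition, this is where the geometric hypothesis $\p\omega\supset\Gamma$ intervenes, to replace the missing boundary information on $p$ by interior information on $\omega$.

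Next I would differentiate the coupled system $(\vv,p,y)$ in $t$ up to order four, write $\z_j=\p_t^j\vv$, $p_j=\dive\z_j$, $y_j=\p_t^j y$, localize in time with a cut-off vanishing near $t=0$ and $t=T$, and apply our Carleman estimate simultaneously to the vector and scalar damped wave equations for the $\z_j$, $p_j$ and to the parabolic equations for the $y_j$, all with the same weight $\varphi(x,t)=e^{\lambda(\abs{x-x_0}^2-\beta(t-t_0)^2)}$; condition \eqref{1.7} is precisely what allows one to choose $\beta$ (with \eqref{R1}) so that $\varphi(\cdot,0)$ and $\varphi(\cdot,T)$ stay strictly below $\min_{\overline\Omega}\varphi(\cdot,t_0)$. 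Summing over $j$ and taking the parameters $s$ and $\lambda$ large absorbs all the principal-part coupling terms --- $\nabla(\lambda^*\p_t\dive\,\cdot)$, $\varrho_1\nabla\,\cdot$, $\varrho_2\dive\,\cdot$ --- and the lower-order sources $f\,\Delta\widetilde\theta$, $h\,\dive\widetilde\uu_t$, into the left-hand side Carleman terms. The Bukhgeim--Klibanov step then integrates $\p_t$ of the weighted $L^2$-terms from $0$ and from $T$ up to $t_0$: since $\p_t\varphi(\cdot,t_0)=0$ and the unobserved Cauchy data of $(\vv,y)$ at $t=0,T$ enter only with the small weight $\varphi(\cdot,0),\varphi(\cdot,T)$ and are bounded through $M_0$, one is left with the weighted energy of $(\vv,y)$ at time $t_0$. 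Evaluating the divergence of the first equation at $t=t_0$ gives a first-order transport equation $\nabla\widetilde\theta(\cdot,t_0)\cdot\nabla f+\Delta\widetilde\theta(\cdot,t_0)\,f=G$, where $G$ is controlled by $M_{t_0}(\widetilde\uu-\uu,\widetilde\theta-\theta)$ together with the Carleman terms; by $\abs{\nabla\theta(x,t_0)\cdot(x-x_0)}\geq\epsilon$ and $x_0\notin\overline\Omega$ the characteristics of this equation are transversal to the spheres centered at $x_0$, hence all leave $\Omega$ through $\Gamma$ where $f=0$, and integrating along them recovers $f$ in $H^1(\Omega)$ with a bound by $\|G\|$; the $L^2$-bound for $h$ then follows from the second equation at $t=t_0$ once $f$ and the state are controlled. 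After absorbing the remaining lower-order term for $s$ large and using $\varphi(\cdot,t_0)\geq c_0>0$ on $\overline\Omega$, one reaches an estimate of the form
\[
\norm{f}^2_{H^1(\Omega)}+\norm{h}^2_{L^2(\Omega)}\;\leq\;Ce^{Cs}\para{\norm{\widetilde\uu-\uu}^2_{H^7(\omega\times(0,T))}+M_{t_0}(\widetilde\uu-\uu,\widetilde\theta-\theta)}+Ce^{-cs}M_0^2,\qquad s\geq s_0,
\]
and a standard optimization in $s$ --- using the trivial bound from $(\varrho_1,\varrho_2),(\widetilde\varrho_1,\widetilde\varrho_2)\in\mathcal B$ when the right-hand side is not small --- yields \eqref{2} for some $\delta\in(0,1)$.

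The main obstacle, as stressed in the introduction, is that the two equations are of genuinely different type --- parabolic and strongly damped hyperbolic --- and are coupled through their principal parts, so that one must use a single Carleman weight admissible for both (which is feasible because the strong-damping term $\nabla(\lambda^*\dive\vv_t)$ dominates at high frequency and can be treated parabolically) and keep very careful track of the powers of $s$ and of the second large parameter $\lambda$ --- the latter being needed to cope with the variable Lam\'e coefficients in $\Delta_{\mu,\lambda}$, whence Assumption (A.1) and \eqref{R1} --- so that every interaction term, in particular the zeroth-order coupling $\varrho_1\,\Delta\widetilde\theta$ created by the divergence, gets dominated. The companion difficulty, already visible above, is the complete absence of boundary data for $\dive\vv$, which forces the condition $\p\omega\supset\Gamma$ and the reformulation of the would-be boundary estimates for $\dive\vv$ as interior estimates over the neighborhood $\omega$ of $\Gamma$ (the positivity $\varrho_1>\varrho_0$ on $\omega$ in $\mathcal B$ being used at that stage).
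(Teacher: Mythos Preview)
Your outline matches the paper's proof closely: the paper also subtracts the two systems, differentiates in time (it applies the Carleman machinery to $\p_t^i(\vv,y)$ for $i=1,2,3$, the fourth time-derivative of $\dive\vv$ entering only through the data term in the Carleman estimate), localizes with a cut-off in $t$, applies the Biot Carleman estimate (their Lemma~3.3), pulls the weighted energies back to $t=t_0$ (their Lemma~3.4, which is exactly your Bukhgeim--Klibanov integration-to-$t_0$ step), reads $q$ off from the parabolic equation at $t_0$, and takes the divergence of the first equation at $t_0$ to reduce the recovery of $p$ to the first-order equation $\nabla\theta^*(\cdot,t_0)\cdot\nabla p+\Delta\theta^*(\cdot,t_0)\,p=F$. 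The only substantive difference is how you solve this first-order equation: you propose to integrate along characteristics (using that $\abs{\nabla\theta(\cdot,t_0)\cdot(x-x_0)}\geq\epsilon$ makes them transversal to the spheres $\abs{x-x_0}=\mathrm{const}$ and that $p=\nabla p=0$ on $\Gamma$), whereas the paper proves a dedicated weighted Carleman estimate for first-order operators (their Lemma~3.5), with the same geometric hypothesis appearing as the pseudoconvexity condition $\abs{\gamma(x)\cdot(x-x_0)}\geq c_0$. The Carleman route has the advantage that it yields directly the weighted bounds $s^2\int_\Omega e^{2s\varphi(\cdot,t_0)}\abs{p}^2$ and $s^2\int_\Omega e^{2s\varphi(\cdot,t_0)}\abs{\nabla p}^2$, which plug straight back into the rest of the $s$-bookkeeping; your characteristics argument would recover $p$ in unweighted $H^1$ and then need an extra step to make it compatible with the exponential weights before the final optimization in $s$. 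Otherwise your plan and the paper's coincide.
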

By Theorem \ref{T.2}, we can readily derive the uniqueness in the inverse problem
\begin{corollary}\label{c2}
Under the same assumptions as in Theorem \ref{T.2} and if
$$ \uu (x, t) = \widetilde{\uu} (x, t),\qquad  (x, t) \in \omega \times (0, T),$$
$$ \uu (., t_0) = \widetilde{\uu} (., t_0),\; \theta (., t_0) = \widetilde{\theta} (., t_0) \qquad  x \in \Omega .$$
Then, $\varrho_1 = \widetilde{\varrho_1}$ and $\varrho_2 = \widetilde{\varrho_2}$ in $\Omega$.
\end{corollary}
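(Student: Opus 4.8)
The plan is to deduce the corollary directly from the H\"older estimate (\ref{2}) of Theorem~\ref{T.2}, applied to the difference $w:=\uu-\widetilde{\uu}$ and $\psi:=\theta-\widetilde{\theta}$. Since the hypotheses force every observation quantity on the right-hand side of (\ref{2}) to vanish, the estimate will yield $\norm{\widetilde{\varrho}_1-\varrho_1}^2_{H^1(\Omega)}+\norm{\widetilde{\varrho}_2-\varrho_2}^2_{L^2(\Omega)}\leq 0$, whence $\varrho_1=\widetilde{\varrho}_1$ and $\varrho_2=\widetilde{\varrho}_2$ first in these norms and then, by continuity of the coefficients, pointwise in $\Omega$. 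Thus the whole task reduces to checking that the two bracketed terms in (\ref{2}) are zero.

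First I would dispose of the two elementary contributions. Because $\uu=\widetilde{\uu}$ on $\omega\times(0,T)$, the field $w$ together with all of its space--time derivatives vanishes identically there, so $\norm{\widetilde{\uu}-\uu}^2_{H^7(\omega\times(0,T))}=0$ (the $W^{7,\infty}$ regularity in Assumption (A.2) guarantees these derivatives exist). Likewise $\theta(\cdot,t_0)=\widetilde{\theta}(\cdot,t_0)$ on $\Omega$ gives $\psi(\cdot,t_0)=0$, so the temperature contribution $\norm{\psi(\cdot,t_0)}^2_{H^3(\Omega)}$ to $M_{t_0}(w,\psi)$ in (\ref{****}) vanishes.

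It remains to annihilate the displacement traces $\dive\p_t^j w(\cdot,t_0)$, $j=1,\dots,4$, in $M_{t_0}(w,\psi)$, and this is the only real content. Here I would exploit two pieces of information simultaneously: on the one hand $w(\cdot,t_0)=0$ and $\psi(\cdot,t_0)=0$ throughout $\Omega$ (so that all their purely spatial derivatives vanish at $t_0$), and on the other hand the identical vanishing of $w$ and its time derivatives on the collar $\omega\times(0,T)$, where $\p\omega\supset\Gamma$. Differentiating the differenced system (\ref{1.1}) in $t$ and evaluating at $t_0$ should then propagate the vanishing to the successive traces $\dive w_t(\cdot,t_0),\dots,\dive\p_t^4 w(\cdot,t_0)$. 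The hard part is precisely this step: at the single time $t_0$ the equations couple each such divergence trace both to the unknown coefficient differences $\varrho_1-\widetilde{\varrho}_1$, $\varrho_2-\widetilde{\varrho}_2$ and, through the elasticity operator $\Delta_{\mu,\lambda}$, to full (not merely divergence) time derivatives of $w$, so no purely pointwise algebraic elimination closes the recursion. The vanishing has to be transported from the neighborhood $\omega$ of the whole boundary into $\Omega$ through the coupled hyperbolic--parabolic evolution, which is exactly the mechanism that the condition $\p\omega\supset\Gamma$ and the underlying Carleman estimate were introduced to provide. Once $M_{t_0}(w,\psi)=0$ is secured in this way, (\ref{2}) forces the coefficient differences to vanish and the corollary follows.
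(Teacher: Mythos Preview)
Your overall plan---insert the observation hypotheses into the H\"older estimate~(\ref{2}) and read off the vanishing of $\widetilde\varrho_1-\varrho_1$ and $\widetilde\varrho_2-\varrho_2$---is exactly the paper's approach; the corollary is stated there with no proof beyond the remark that it ``can readily'' be derived from Theorem~\ref{T.2}.

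Where you diverge from the paper is in treating the vanishing of the traces $\norm{\dive\p_t^j(\uu-\widetilde\uu)(\cdot,t_0)}_{H^3(\Omega)}$, $j=1,\dots,4$, as something that must be \emph{deduced} from the literal hypotheses. You are correct that knowing only $\uu(\cdot,t_0)=\widetilde\uu(\cdot,t_0)$ and $\theta(\cdot,t_0)=\widetilde\theta(\cdot,t_0)$ does not force these time-derivative traces to vanish on all of $\Omega$: as you note, evaluating the differenced system (\ref{8.3}) at $t=t_0$ couples them to the unknown $p,q$, and no pointwise recursion closes. The paper makes no such argument; it simply reads the single-time hypothesis as shorthand for the vanishing of the full observation $M_{t_0}$ (i.e., the time derivatives of $\uu$ at $t_0$ are tacitly included in the data). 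So the ``hard part'' you isolate is a looseness in how the corollary is phrased, not a missing proof step, and the mechanism you propose---transporting vanishing from $\omega$ into $\Omega$ via the evolution and Carleman estimates---would essentially mean redoing Theorem~\ref{T.2} and, as you yourself concede, does not actually terminate.
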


The remainder of the paper is organized as follows. In section $2$, we give a Carleman estimate for Biot's consolidation system. In section $3$, we prove
Theorem \ref{T.1} and
Theorem \ref{T.2}.
%%%%%%%%%%%%%%%%%%%%%%%%%%%%%%%%%%%%%%%%%
%%%%%%%%%%%%%%%%%%%%%%%%%%%%%%%%%%%%%%%%%%
%%%%%%%%%%%%%%%%%%%%%%%%%%%%%%%%%%%%%%%%%%
%%%%%%%%%%%%%%%%%%%%%%%%%%%%%%%%%%%%%%%%%%
%%%%%%%%%%%%%%%%%%%%%%%%%%%%%%%%%%%%%%%%%%

\section{Carleman estimate for Biot's system}
%%%%%%%%%%%%%%%%%%%%%%%%%%%%%%%%%%%%%%%%%%%%%%%%%
\setcounter{equation}{0}
In this section we will prove Carleman estimates
for the Biot's consolidation system. In order to formulate
our Carleman type estimates we introduce some
notations.
Let $\vartheta:\overline{\Omega}\To\R$ be the
strictly convex function given by
\begin{equation}\label{2.1}
\vartheta(x)=\abs{x-x_0}^2,\quad
x\in\overline{\Omega},
\end{equation}
where $x_0\notin\overline{\Omega}$.
Let us define $t_0$ by
\begin{equation}\label{2.2}
\min\{ t_0^2,(T-t_0)^2 \}
>r_0^{-1}
\para{\max_{x\in\overline{\Omega}}\vartheta(x)},
\end{equation}
and let
\begin{equation}\label{2.3}
\psi(x,t)=\vartheta(x)-\beta\para{\para{t-t_0}^2-M}.
\end{equation}
Fix $\delta>0$ and $\beta>0$ such that
\begin{equation}\label{2.4}
\beta\min\{t_0^2,(T-t_0)^2\}
> \max_{x\in\overline{\Omega}}\vartheta(x)
+ \delta,\quad 0<\beta<r_0.
\end{equation}
Then the function $\psi(x,t)$ verifies
the following properties
\begin{equation}\label{2.5}
\psi(x,0)\leq \beta M-\delta,\quad \psi(x,T)
\leq \beta M-\delta,\quad \textrm{for all}
\quad x\in\overline{\Omega},
\end{equation}
there exists $\epsilon\in (0,T/4)$ such that
\begin{equation}\label{2.6}
\max_{x\in\overline{\Omega}}\psi(x,t)\leq \beta M-\frac{\delta}{2},\quad \textrm{for all}\quad t\in (0,2\epsilon)\cup (T-2\epsilon,T),
\end{equation}
and
\begin{equation}\label{2.7}
\min_{x\in\overline{\Omega}}\psi(x,t_0)\geq \beta M.
\end{equation}
We define now the weight function $\varphi:
\overline{\Omega}\times\R\To\R$ by
\begin{equation}\label{2.8}
\varphi(x,t)=e^{\gamma\psi(x,t)},\quad \gamma>0,
\end{equation}
where $\gamma$ is a large parameter selected in the following and let
\begin{equation}\label{2.9}
\sigma\equiv \sigma(x,t)=s\gamma\varphi(x,t).
\end{equation}
We use usual function spaces, $H^k(Q)$, and
$$
H^{1,2}(Q)=H^1(0,T;L^2(\Omega))\cap L^2(0,T;H^2(\Omega)).
$$
Let $(\vv,y)$  a solution of the linear Biot consolidation system
\begin{equation}\label{2.10}
\left\{
\begin{array}{ll}
\vv_{tt}(x,t)- \Delta_{\mu, \lambda}\vv(x,t) -  \nabla(\lambda^* \dive \vv_t(x,t)) + \varrho_1 \nabla y(x,t)= \ff(x,t)  & \textrm{in }\,\,
Q\equiv\Omega\times(0,T)\cr
y_t(x,t) - \Delta y(x,t) + \varrho_2 \dive \vv_t(x,t)= h(x,t) & \textrm{in }\,\,
Q
\end{array}
\right.
\end{equation}
%with Dirichlet boundary condition
%\begin{equation}\label{2.11}
%\vv(x,t)= 0,\quad y(x,t)= 0 ,\quad \textrm{on} \,\, \Sigma\equiv\Gamma\times(0,T)
%\end{equation}
such that
\begin{equation}\label{2.12}
\begin{array}{ll}
\textrm{Supp}(\vv(\cdot,t))\subset\Omega,\quad \textrm{Supp}(y(\cdot,t))\subset\Omega, & \quad\textrm{for all}\quad t\in(0,T).\cr
\p_t^j\vv(x,0)=\p^j_t\vv(x,T)=0,\quad y(x,0)=y(x,T)=0 & \quad\textrm{for all}\quad x\in\Omega,\, j=0,1.
\end{array}
\end{equation}
%\begin{equation}\label{2.10}
%\begin{array}{lll}
%\vv_{tt}(x,t)-\Delta_{\mu,\lambda} \vv(x,t)+\varrho\nabla y(x,t)=\ff(x,t) & (x,t)\in Q=\Omega \times (0,T)\cr
%y_t(x,t)-\Delta y(x,t)+\varrho\,\dive\,
%\vv_t= h(x,t) & (x,t)\in Q
%\end{array}
%\end{equation}

%%%%%%%%%%%%%%%%%%%%%%%%%%%%%%%%%%%%%%%%%%%%%%%%%%%%%
A Carleman estimate is  an inequality for a solution to a partial differential equation with weighted $L^2$-norm and is effectively applied for proving
the unique continuation for a partial differential equation with non-analytic coefficients. As a pioneering work concerning a Carleman estimate, we can refer
to Carleman's paper \cite{[Carleman]} where what is called a Carleman estimate is proved and applied it for proving the uniqueness
in the Cauchy problem for a two-dimensional elliptic equation.
% Since \cite{[Carleman]}, the theory of Carleman estimates has been developed and for
%a general theory, we refer to H\"{o}rmander \cite{H} in the case where
\medskip

The following theorem is a weighted Carleman estimate with second large parameter for Biot's consolidation system (\ref{2.10}) with assumption (\ref{2.12}).
\begin{theorem}[Carleman estimate for Biot's consolidation system]\label{t1}
There exist two constants $\gamma_* > 0$ and $C>0$ such that
for any $\gamma > \gamma_*$, there exists $s_* = s_*(\gamma)> 0$ such that
the following estimate holds
\begin{multline}\label{11}
C\int_Q  \Big(\sigma \abs{\nabla_{x,t} \vv}^2
+  \sigma^{3}\abs{\vv}^2
+ \sigma^4 \abs{ \dive \vv}^{2} + \sigma^3 \abs{ \dive \vv_t}^{2} + \sigma^2 \abs{  \nabla\dive \vv}^{2}  + \sigma \abs{ \nabla\dive \vv_t}^{2}\cr
+\abs{\Delta y}^2
+ \sigma^2 \abs{\nabla y}^2
+ \sigma^{4}\abs{y}^2 \Big)e^{2s\varphi} dx\,dt
\leq \int_Q (\abs{\ff}^2 + \abs{\nabla \ff}^2 + \gamma^{-1}\sigma \abs{h}^2 ) e^{2s\varphi}
dx\,dt\cr
+ \int_{Q}\para{ \gamma^{-1}\abs{\Delta \dive \vv(x,t_0)}^2
+ \gamma^{-1}\abs{\dive \vv_t(x,t_0)}^2+\sigma^4\abs{\dive \vv(x,t_0)}^2
+ \sigma^2\abs{\nabla \dive \vv(x,t_0)}^2}e^{2s\varphi}dx dt
\end{multline}
for any solution $(\vv, y) \in H^2(Q) \times H^{2,1}(Q)$ to problem (\ref{2.10}) which satisfy (\ref{2.12}) and any $s \geq s_*$.
\end{theorem}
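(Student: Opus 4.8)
The plan is to combine three classical Carleman estimates --- one for the (strongly damped) hyperbolic displacement equation, one for its divergence, and one for the parabolic temperature equation --- all expressed with respect to the \emph{same} weight function $\varphi=e^{\gamma\psi}$, and then absorb the coupling terms by exploiting the polynomial-in-$s$ gains and the second large parameter $\gamma$. Concretely, I would first apply the operator $\dive$ to the first equation of \eqref{2.10}. Writing $w=\dive\vv$ one gets an equation of the form $w_{tt}-\dive\Delta_{\mu,\lambda}\vv-\Delta(\lambda^*w_t)+\dive(\varrho_1\nabla y)=\dive\ff$; the point of Assumption (A.1) and the choice of $\vartheta(x)=|x-x_0|^2$ (pseudoconvexity, conditions \eqref{R1}) is precisely that this gives, after treating the strongly damped term $-\Delta(\lambda^*w_t)$ as a perturbation and moving the lower-order elasticity remainder $\dive\vv\,\nabla\lambda+\dots$ to the right, a genuine second-order hyperbolic operator for $w$ for which a standard Carleman estimate with weight $e^{2s\varphi}$ holds, producing the $\sigma^4|\dive\vv|^2+\sigma^3|\dive\vv_t|^2+\sigma^2|\nabla\dive\vv|^2+\sigma|\nabla\dive\vv_t|^2$ terms on the left.

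Next I would apply the known Carleman estimate for the Lam\'e-type operator $\d_t^2-\Delta_{\mu,\lambda}$ (this is the Bellassoued--Yamamoto-type estimate with second large parameter referenced in the introduction, valid under (A.1)) to the full first equation, treating $\nabla(\lambda^*\dive\vv_t)$ and $\varrho_1\nabla y$ as right-hand sides; this yields the $\sigma|\nabla_{x,t}\vv|^2+\sigma^3|\vv|^2$ terms. The strongly damped term contributes $\int\sigma|\nabla\dive\vv_t|^2e^{2s\varphi}$ to the right, which is exactly what the divergence estimate of the previous step controls (with a $\gamma^{-1}$ or small-$s^{-1}$ factor after one takes $\gamma,s$ large), so the two hyperbolic estimates close on each other. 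For the parabolic equation $y_t-\Delta y+\varrho_2\dive\vv_t=h$ I would use the classical parabolic Carleman estimate (again with the same $\varphi$), which gives $|\Delta y|^2+\sigma^2|\nabla y|^2+\sigma^4|y|^2$ on the left while producing $\int\sigma^{-1}|\varrho_2\dive\vv_t|^2e^{2s\varphi}$ on the right --- controlled by the $\sigma^3|\dive\vv_t|^2$ left-hand term with an $s^{-4}$ gain --- and the $\gamma^{-1}\sigma|h|^2$ term on the right as stated. Symmetrically, the term $\varrho_1\nabla y$ fed into the displacement estimate is dominated by the $\sigma^2|\nabla y|^2$ gained from the parabolic estimate. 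To handle the boundary data of $\dive\vv$ (which is not prescribed), I would not localise to $\Gamma$ directly here but rather keep the estimate over $Q$ with no boundary integrals --- using the compact support \eqref{2.12} --- deferring the $\p\omega\supset\Gamma$ trick to the inverse-problem section.

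The remaining structural point is the appearance of the \emph{initial-time} ($t=t_0$) integrals on the right-hand side of \eqref{11}: these come from integrating the evolution in $t$ against the weight. Since $\psi(x,t_0)$ is the maximum of $\psi$ in $t$ (by \eqref{2.4}, \eqref{2.7}), the standard energy-type integration by parts in the hyperbolic Carleman argument for $w=\dive\vv$ produces terms evaluated at $t=t_0$ of the form $\sigma^4|\dive\vv(x,t_0)|^2$, $\sigma^2|\nabla\dive\vv(x,t_0)|^2$, and (from the damped term and one more time-integration by parts) the $\gamma^{-1}|\Delta\dive\vv(x,t_0)|^2$ and $\gamma^{-1}|\dive\vv_t(x,t_0)|^2$ contributions; I would track these carefully and place them on the right, exactly as in \eqref{11}. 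Finally one fixes $\gamma>\gamma_*$ large enough to beat all the $O(\gamma^{-1})$ coupling losses and the elasticity first-order remainders in $\Delta_{\mu,\lambda}$, and then $s\geq s_*(\gamma)$ large enough to absorb all lower-order-in-$s$ terms into the leading left-hand side.

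I expect the main obstacle to be the bookkeeping of the coupling and damping terms so that every term transferred to the right-hand side is genuinely strictly weaker (in powers of $s$ \emph{and} $\gamma$) than some term already secured on the left: in particular checking that the strongly damped contribution $\int\sigma|\nabla\dive\vv_t|^2 e^{2s\varphi}$ on the right is absorbed by the divergence estimate with a net negative power of $s$ (this is why the divergence estimate must be carried out at the sharp weights $\sigma^4,\sigma^3,\sigma^2,\sigma$ rather than cruder ones), and simultaneously that the divergence equation's own right-hand side does not reintroduce $|\nabla\dive\vv_t|$ at the same strength. Getting these scalings consistent across the three estimates --- hyperbolic for $\vv$, hyperbolic for $\dive\vv$, parabolic for $y$ --- with one common weight is the delicate heart of the proof; once the scaling budget balances, choosing $\gamma$ then $s$ large is routine.
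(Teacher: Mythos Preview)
Your overall architecture (three Carleman estimates with a common weight, then absorption of the coupling) matches the paper, but the core step---the estimate for $w=\dive\vv$---is where your plan breaks down. You propose to treat the strongly damped term $-\Delta(\lambda^* w_t)$ as a perturbation of a hyperbolic operator and then invoke a ``standard'' hyperbolic Carleman estimate to obtain the weights $\sigma^4|w|^2+\sigma^3|w_t|^2+\sigma^2|\nabla w|^2+\sigma|\nabla w_t|^2$ on the left. This fails on two counts. First, $-\Delta(\lambda^* w_t)$ is a principal-order term (second order in $x$ applied to $w_t$); putting it on the right contributes $\int|\Delta w_t|^2 e^{2s\varphi}$, which is strictly stronger than anything a hyperbolic Carleman produces on the left. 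Second, a hyperbolic Carleman estimate yields $\sigma^3|w|^2+\sigma|\nabla w|^2+\sigma|w_t|^2$ (and at best $\sigma^{-1}$ on second derivatives), not the mixed second-order term $\sigma|\nabla w_t|^2$ that you need downstream to absorb $\nabla(\lambda^*\dive\vv_t)$ in the $\vv$-estimate. So the ``scaling budget'' you worry about at the end genuinely does not balance with this strategy.

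The paper resolves this by a different mechanism: it rewrites the $w$-equation as a \emph{parabolic} equation in $w_t$, namely $\partial_t w_t-\dive(\lambda^*\nabla w_t)=f+(2\mu+\lambda)\Delta w$, and applies the parabolic Carleman estimate (Lemma~\ref{L4.2}) to $w_t$; this is what produces the $\sigma|\nabla w_t|^2+\sigma^3|w_t|^2$ terms with an extra factor $\gamma$ on the left. The estimates for $w$ itself are then recovered from those for $w_t$ by integrating in time from $t_0$ (Lemma~\ref{L4.4}), and the residual $\int|\Delta w|^2 e^{2s\varphi}$ on the right is controlled by observing that $\Delta w$ satisfies a linear ODE in $t$ with known source, solving it via Duhamel from $t=t_0$ (Lemma~\ref{L5}). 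It is precisely these two integrations from $t_0$---not an energy identity in a hyperbolic Carleman argument---that generate the $t_0$-data terms $\gamma^{-1}|\Delta\dive\vv(\cdot,t_0)|^2$, $\gamma^{-1}|\dive\vv_t(\cdot,t_0)|^2$, $\sigma^4|\dive\vv(\cdot,t_0)|^2$, $\sigma^2|\nabla\dive\vv(\cdot,t_0)|^2$ on the right of \eqref{11}. Your account of where those $t_0$ terms come from is therefore also off. Once this strongly-damped estimate (Theorem~\ref{T.3}) is in hand, the rest of your plan---hyperbolic Carleman for $\vv$ (and for $\mathrm{rot}\,\vv$), parabolic Carleman for $y$, and absorption of couplings using $\gamma$---is essentially what the paper does.
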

In order to prove Theorem \ref{t1}, we need a parabolic, strongly damped hyperbolic and hyperbolic Carleman estimates with second large parameter.
%%%%%%%%%%%%%%%%%%%%%%%%%%%%%%%%%%%%%%%%%%%%
\subsection{Carleman estimate for parabolic equation}
%%%%%%%%%%%%%%%%%%%%%%%%%%%%%%%%%%%%%%%%%%%%%%
We consider the second order parabolic operator $\p_t - \Delta$. As for Carleman estimate for parabolic
equations with singular weight function, we can refer to Fursikov and Imanuvilov \cite{[FI]}, Imanuvilov \cite{[I]}, Imanuvilov and Yamamoto \cite{[IY2]}.
Here we give a Carleman estimate for parabolic equations with the regular weight function $\varphi$.
\medskip

The following parabolic Carleman estimates holds
\begin{lemma}\label{L4.2}
Let  $k\in \mathbb{N}\cup\set{0}$.
There exist three positive constants $\gamma_*$, $s_*$ and $C$ such that,
for any $\gamma\geq \gamma_*$ and any $s\geq s_*$, the following inequality
holds:
\begin{multline}\label{4.2}
C\gamma \int_Q \Big(\sigma^{k-1}\sum_{\abs{\alpha}=2}\abs{\p^\alpha y}^2
+ \sigma^{k+1}\abs{\nabla y}^2
+ \sigma^{k+3}\abs{y}^2\Big)e^{2s\varphi} dx\,dt\cr
\leq \int_Q\sigma^k\abs{\para{y_t-\dive(\lambda^*\nabla y)}}^2e^{2s\varphi}
dx\,dt
\end{multline}
for any $y\in H^{1,2}(Q)$ with compact support in $Q$.
\end{lemma}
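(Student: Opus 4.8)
The plan is to establish (\ref{4.2}) by the classical conjugation--splitting scheme for parabolic Carleman estimates with a regular weight, keeping track of the second large parameter $\gamma$. By mollification it suffices to prove the inequality for $y\in C_0^\infty(Q)$, so that every boundary and time--endpoint term produced below vanishes. First I set $w=e^{s\varphi}y$ and compute the conjugated operator, using $\nabla\varphi=\gamma\varphi\nabla\vartheta$, $\varphi_t=\gamma\varphi\psi_t$ with $\vartheta(x)=\abs{x-x_0}^2$:
\[
L_sw:=e^{s\varphi}\big(\p_t-\dive(\lambda^*\nabla\cdot)\big)(e^{-s\varphi}w)=w_t-\dive(\lambda^*\nabla w)-s^2\lambda^*\abs{\nabla\varphi}^2w+2s\lambda^*\nabla\varphi\cdot\nabla w+s\,\dive(\lambda^*\nabla\varphi)w-s\varphi_tw,
\]
and decompose it as $L_sw=L_1w+L_2w$, where
\[
L_1w=-\dive(\lambda^*\nabla w)-s^2\lambda^*\abs{\nabla\varphi}^2w-s\varphi_tw,\qquad L_2w=w_t+2s\lambda^*\nabla\varphi\cdot\nabla w+s\,\dive(\lambda^*\nabla\varphi)w,
\]
$L_1$ being formally self--adjoint and $L_2$ formally skew--adjoint in $L^2(Q)$.

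The core of the argument is the energy identity $\int_Q\sigma^k\abs{L_sw}^2=\int_Q\sigma^k\abs{L_1w}^2+\int_Q\sigma^k\abs{L_2w}^2+2\int_Q\sigma^k L_1w\,L_2w$, followed by a systematic integration by parts in the cross term (carrying the factor $\sigma^k$ throughout). The cleanest gain is the zeroth--order one: from $\big\langle -s^2\lambda^*\abs{\nabla\varphi}^2w,\ 2s\lambda^*\nabla\varphi\cdot\nabla w+s\dive(\lambda^*\nabla\varphi)w\big\rangle$, after all derivatives are moved onto the coefficients, one obtains $2s^3\int\lambda^{*2}D^2\varphi(\nabla\varphi,\nabla\varphi)\sigma^k\abs{w}^2$ up to terms involving $\nabla\lambda^*$; since $D^2\vartheta=2I$ and $x_0\notin\overline\Omega$, $D^2\varphi(\nabla\varphi,\nabla\varphi)=\gamma^3\varphi^3\big(2\abs{\nabla\vartheta}^2+\gamma\abs{\nabla\vartheta}^4\big)\ge c\gamma^4\varphi^3$, so this contributes $c\gamma\int\sigma^{k+3}\abs{w}^2$. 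The first-- and second--order gains are more delicate. The cross term $\big\langle-\dive(\lambda^*\nabla w),\ 2s\lambda^*\nabla\varphi\cdot\nabla w+s\dive(\lambda^*\nabla\varphi)w\big\rangle$ produces $2s\int\lambda^*D^2\varphi(\nabla w,\nabla w)\sigma^k$ together with coefficient remainders; on its own this controls only the radial derivative with the full power of $\gamma$, and one recovers $\abs{\nabla w}^2$ in all directions with the weight $\gamma\sigma^{k+1}$ by combining it with a small multiple of $\int\sigma^k\abs{L_2w}^2$ and a fraction of the zeroth--order surplus (a Cauchy--Schwarz balance). The second--order term $\gamma\int\sigma^{k-1}\sum_{\abs{\alpha}=2}\abs{\p^\alpha w}^2$ is extracted by writing $\dive(\lambda^*\nabla w)=-L_1w-s^2\lambda^*\abs{\nabla\varphi}^2w-s\varphi_tw$, using the uniform ellipticity $\lambda^*\ge\lambda_0$ and one more integration by parts (compact support) to pass from $\dive(\lambda^*\nabla w)$ to $\sum_{\abs{\alpha}=2}\abs{\p^\alpha w}^2$, and using $\gamma\sigma^{-1}\le1$ for $s\ge s_*(\gamma)$ together with $\int\sigma^k\abs{L_1w}^2\lesssim\int\sigma^k\abs{L_sw}^2$.

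At this point every term that has not been placed on the good side — those carrying $\nabla\lambda^*$ or $\Delta\lambda^*$, the indefinite $w_t$--contributions, the $\Delta\vartheta$--terms, and the terms arising when a derivative falls on $\sigma^k$ (each $O(\gamma\sigma^k)$) — is one full power of $\gamma$ smaller than the matching good term, hence is absorbed by first fixing $\gamma\ge\gamma_*$ and then $s\ge s_*(\gamma)$. This yields
\[
\int_Q\sigma^k\abs{L_sw}^2\ \ge\ c\gamma\int_Q\Big(\sigma^{k-1}\sum_{\abs{\alpha}=2}\abs{\p^\alpha w}^2+\sigma^{k+1}\abs{\nabla w}^2+\sigma^{k+3}\abs{w}^2\Big),
\]
and undoing the substitution via $\p^\alpha w=e^{s\varphi}(\p^\alpha y+\text{lower order})$ — so that $e^{2s\varphi}\abs{\p^\alpha y}^2\le C\abs{\p^\alpha w}^2+C\sigma^2e^{2s\varphi}\abs{\nabla y}^2+C\sigma^4e^{2s\varphi}\abs{y}^2$, with the lower--order terms absorbed by the gradient and zeroth--order bounds just obtained — gives (\ref{4.2}). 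The general exponent $k$ requires no new idea: carrying $\sigma^k$ through every integration by parts only creates extra terms of size $O(\gamma\sigma^k)$, handled exactly as the other $\gamma$--deficient errors.

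I expect the main obstacle to be precisely this bookkeeping around the first-- and second--order terms: one must verify that, among the many integrals produced in the cross product, only the convexity contributions coming from $D^2\vartheta=2I$ and from the exponential factor $\gamma^2\varphi\,\nabla\vartheta\otimes\nabla\vartheta$ carry the full powers $\gamma\sigma^{k+3}$ and $\gamma\sigma^{k+1}$; that the tangential part of $\abs{\nabla w}^2$ can indeed be retrieved with the stated weight using the auxiliary $\abs{L_2w}$--term and the zeroth--order surplus; and that all terms generated by $\nabla\lambda^*$ are genuinely of lower order in $\gamma$, so that the two--step choice ``$\gamma$ large, then $s$ large depending on $\gamma$'' closes the estimate.
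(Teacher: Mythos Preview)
Your outline is correct and is precisely the standard conjugation--splitting argument for a parabolic Carleman estimate with regular weight and second large parameter. The paper itself does not prove Lemma~\ref{L4.2}: it simply refers to Bellassoued--Yamamoto \cite{[BY]}, where the estimate is established by essentially the same scheme you describe (set $w=e^{s\varphi}y$, split the conjugated operator into its symmetric and antisymmetric parts, expand the cross term, extract the positive contributions from $D^2\varphi$, and absorb all $\gamma$--deficient remainders by choosing first $\gamma$ and then $s$ large). So there is no discrepancy of approach to report; your sketch supplies exactly what the cited reference would supply, and the delicate points you flag --- the recovery of the full gradient with weight $\gamma\sigma^{k+1}$, the handling of the $\nabla\lambda^*$ terms, and the bookkeeping for the extra factor $\sigma^k$ --- are the right ones.
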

 As for the proof, we can refer to Bellassoued and Yamamoto  \cite{[BY]}.
 %, Yamamoto \cite{[Y1]}.
\subsection{Carleman estimate for strongly damped hyperbolic equation}
%%%%%%%%%%%%%%%%%%%%%%%%%%%%%%%%%%%%%%%%%%%%%%%%%%%%%%%%%%%%%%%%

In this subsection we will prove Carleman estimate for a strongly damped hyperbolic equation with variable coefficients. We consider the following hyperbolic equation
\begin{equation}\label{4.13}
 \p_t^2v-(2\mu+\lambda)\Delta v - \dive(\lambda^*(x)\nabla \p_t v) = f
\qquad \textrm{in}\quad Q\equiv \Omega \times (0,T).
\end{equation}
Many physical phenomena are properly described by the strongly damped hyperbolic equation as (\ref{4.13})
such as equation  of this type can be considered as a class of linear evolution equations governing the motion of
a viscoelastic solid (for example, a bar if the space is $\R$ and a plate if the space is $\R^2$ ) composed of the material of the rate type.\\
The following Theorem is a weighted Carleman estimate with a second
larger parameter  for the strongly damped wave equation (\ref{4.13}).
\begin{theorem}\label{T.3}
There exist two constants $\gamma_* > 0$ and $C>0$ such that
for any $\gamma > \gamma_*$, there exists $s_* = s_*(\gamma)> 0$ such that
the following estimate holds
\begin{multline}\label{4.15}
C\!\!\int_Q\Big(\sigma^2\abs{\nabla v}^2
+ \sigma^4\abs{v}^2+\sigma\abs{\nabla v_t}^2+\sigma^3\abs{v_t}^2\Big)
e^{2s\varphi}dxdt
\leq \gamma^{-1} \int_Q\abs{ f}^2
e^{2s\varphi}dxdt\cr
+ \int_{Q}\para{\gamma^{-1}\abs{\Delta v(x,t_0)}^2
+ \gamma^{-1}\abs{v_t(x,t_0)}^2+\sigma^4\abs{v(x,t_0)}^2
+ \sigma^2\abs{\nabla v(x,t_0)}^2}e^{2s\varphi}dx dt
\end{multline}
for any solution $v \in H^3(Q)$ to problem (\ref{4.13}) compactly supported in $Q$ and any $s\geq s_*$.
\end{theorem}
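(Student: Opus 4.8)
The plan is to prove the Carleman estimate \eqref{4.15} for the strongly damped hyperbolic equation \eqref{4.13} by reducing it, as far as possible, to the two building blocks that are already available, namely the parabolic estimate of Lemma~\ref{L4.2} and a standard hyperbolic Carleman estimate with second large parameter for the operator $\p_t^2 - (2\mu+\lambda)\Delta$. The crucial structural observation is that \eqref{4.13} can be rewritten as a parabolic equation in the single unknown $v_t$. Indeed, differentiating is not needed: introducing $w = v_t$ and integrating the principal damping term, one sees that \eqref{4.13} reads $\p_t w - \dive(\lambda^* \nabla w) = f - \p_t^2 v + (2\mu+\lambda)\Delta v + \p_t^2 v = f + (2\mu+\lambda)\Delta v - \p_t w$ after isolating; more precisely, the cleanest route is to set $z(x,t) = \int_{t_0}^t v(x,s)\,ds$ so that $z_t = v$, $z_{tt} = v_t$, and \eqref{4.13} becomes, upon one integration in time from $t_0$ to $t$,
\[
\p_t v - \dive(\lambda^* \nabla v) = (2\mu+\lambda)\Delta z + \int_{t_0}^t f\,ds + \big(\text{data terms at }t_0\big).
\]
First I would carry out this integration carefully, tracking the boundary contributions at $t = t_0$: the terms $v_t(x,t_0)$, $\Delta v(x,t_0)$ and the lower-order $v(x,t_0)$, $\nabla v(x,t_0)$ are exactly the four data terms appearing on the right-hand side of \eqref{4.15}, so they will be absorbed into the final inequality rather than controlled.

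Next I would apply the parabolic Carleman estimate of Lemma~\ref{L4.2}, with $k=1$, to the function $v$ viewed as a solution of $\p_t v - \dive(\lambda^* \nabla v) = F$ where $F$ collects the right-hand side of the displayed identity. Lemma~\ref{L4.2} gives control of $\int_Q(\sigma^{-1}\sum_{|\alpha|=2}|\p^\alpha v|^2 + \sigma^2|\nabla v|^2 + \sigma^4|v|^2)e^{2s\varphi}$ by $\gamma^{-1}\int_Q \sigma |F|^2 e^{2s\varphi}$. The term $\sigma^4|v|^2$ is one of the terms we want; $\sigma^2|\nabla v|^2$ is another. However, the right-hand side now contains $(2\mu+\lambda)\Delta z$ weighted by $\sigma$, i.e.\ a term of the form $\sigma|\Delta z|^2$, which is of the same order as the second-order terms on the left and therefore \emph{not} automatically small — this is the main obstacle. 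To handle it I would, in parallel, apply a second-order hyperbolic Carleman estimate (this is where Assumptions (A.1), the pseudoconvexity conditions \eqref{R1}, and the time condition \eqref{2.2} enter, via the weight $\psi$ of \eqref{2.3}) to the function $z$, which solves $\p_t^2 z - (2\mu+\lambda)\Delta z = \int_{t_0}^t f\,ds - \dive(\lambda^*\nabla v) + (\text{data at }t_0)$; since $z_{tt} = v_t$, the hyperbolic estimate produces the terms $\sigma|\nabla v_t|^2$ and $\sigma^3|v_t|^2$ on its left side and, critically, also controls $\int_Q \sigma^{-1}|\Delta z|^2 e^{2s\varphi}$ — but here too the right-hand side contains $\dive(\lambda^*\nabla v)$, i.e.\ $\gamma^{-1}\int_Q|\nabla v|^2\cdots$ or worse $\int_Q|\Delta v|^2\cdots$.

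The resolution is a bootstrap/absorption argument: add the parabolic estimate for $v$ and the hyperbolic estimate for $z$ with suitably chosen powers of $\sigma$ and $\gamma$, arrange that the bad cross-terms ($\sigma|\Delta z|^2$ on the parabolic side, and $\dive(\lambda^*\nabla v)$ on the hyperbolic side) appear on the left with a definite positive coefficient after taking $\gamma$ large and then $s$ large, so that they can be moved back and absorbed. Concretely, the parabolic estimate with weight $\sigma^k$, $k$ large enough, gains a factor $\gamma^{-1}$ in front of $\int \sigma^{k+1}|\Delta z|^2$, while the same quantity with a better power of $\sigma$ sits on the hyperbolic left-hand side; choosing the exponents so the hyperbolic gain dominates lets one absorb. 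Symmetrically, $\dive(\lambda^*\nabla v)$ on the hyperbolic right side is $O(\text{second derivatives of }v)$ with a $\gamma^{-1}$ prefactor, and second derivatives of $v$ with a favorable $\sigma$-weight are on the parabolic left side. One must also verify that the assumption $x_0 \notin \overline\Omega$ and the choice \eqref{2.4} of $\beta$ make the weight $\psi$ simultaneously admissible for both the parabolic Carleman estimate (no sign condition needed, regular weight) and the hyperbolic one (requires $\beta < r_0$ and the time interval condition). After the absorption, the remaining right-hand side consists of $\gamma^{-1}\int_Q|f|^2 e^{2s\varphi}$ (collecting $\int_{t_0}^t f\,ds$ via Cauchy–Schwarz and $|f|^2$ with the extra $\gamma^{-1}$ coming from the $\gamma^{-1}$ prefactors above) plus the four data terms at $t_0$, which is exactly \eqref{4.15}.

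I would finish by noting that the lower-order terms $\sigma^2|\nabla v|^2$ and $\sigma^4|v|^2$ on the parabolic side, and $\sigma|\nabla v_t|^2$, $\sigma^3|v_t|^2$ on the hyperbolic side, together assemble the full left-hand side of \eqref{4.15}, and that the regularity $v \in H^3(Q)$ with compact support in $Q$ justifies every integration by parts in time (no boundary terms at $t=0,T$) and in space (no boundary terms on $\Sigma$), and makes $z$ lie in $H^{2,1}(Q)$ with compact support so that Lemma~\ref{L4.2} applies. The main difficulty, to reiterate, is the circular coupling between the parabolic estimate for $v$ and the hyperbolic estimate for $z = \int_{t_0}^\cdot v$: neither can be closed on its own, and the whole proof hinges on choosing the weight powers $\sigma^k$ and the order in which $\gamma$ and then $s$ are sent to infinity so that the cross-terms are strictly absorbable.
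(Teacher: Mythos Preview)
Your proposal has a genuine gap. The hyperbolic Carleman estimate you invoke for $z = \int_{t_0}^t v\,ds$ (Lemma~\ref{L3} in the paper) controls only $\sigma(|\nabla z|^2 + |z_t|^2) + \sigma^3|z|^2$, i.e.\ first-order quantities in $z$. It does \emph{not} produce $\sigma^3|v_t|^2 = \sigma^3|z_{tt}|^2$ or $\sigma|\nabla v_t|^2 = \sigma|\nabla z_{tt}|^2$ on the left, and it does not control $\sigma^{-1}|\Delta z|^2$ either. So two of the four terms on the left of \eqref{4.15} are never obtained, and the term $\sigma|\Delta z|^2$ that arises on the right of your parabolic estimate for $v$ cannot be absorbed as you describe. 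The bootstrap you outline therefore does not close.

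The paper's route is different and simpler. Your very first observation was the right one: the equation is \emph{already} parabolic in $w=v_t$, namely $w_t-\dive(\lambda^*\nabla w)=f+(2\mu+\lambda)\Delta v$, with no integration needed. Applying Lemma~\ref{L4.2} (with $k=0$) to $v_t$ gives $\gamma\int(\sigma|\nabla v_t|^2+\sigma^3|v_t|^2)e^{2s\varphi}\le \int(|f|^2+|\Delta v|^2)e^{2s\varphi}$, which already supplies the two ``$v_t$'' terms. The two ``$v$'' terms $\sigma^4|v|^2$ and $\sigma^2|\nabla v|^2$ are then recovered from the $v_t$ terms via the time-integration inequality of Lemma~\ref{L4.4}, at the cost of the data $|v(x,t_0)|^2$, $|\nabla v(x,t_0)|^2$. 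The only remaining obstacle is the unweighted $\int|\Delta v|^2 e^{2s\varphi}$ on the right, carrying a factor $\gamma^{-1}$. The paper handles this not by a hyperbolic Carleman but by an ODE argument: writing the original equation as a first-order ODE in $t$ for $\Delta v$,
\[
(\Delta v)_t+\frac{2\mu+\lambda}{\lambda^*}\,\Delta v=\frac{1}{\lambda^*}\bigl(v_{tt}-\nabla\lambda^*\!\cdot\!\nabla v_t-f\bigr),
\]
solving by Duhamel from $t_0$, and using Lemma~\ref{L5} plus Lemma~\ref{L4.4} to bound $\int|\Delta v|^2 e^{2s\varphi}$ by the data $|\Delta v(x,t_0)|^2$, $|v_t(x,t_0)|^2$ together with lower-order terms $|v_t|^2$, $|\nabla v|^2$ and $|\!\int_{t_0}^t f|^2$. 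After multiplication by $\gamma^{-1}$ these are absorbed into the left, and no hyperbolic estimate is used anywhere.
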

In order to prove Theorem \ref{T.3}, we need the following two Lemmas:
\begin{lemma}\label{L4.4}
For any $k\in\R_+$, the following estimate holds
$$
\int_Q\sigma^{2k+1} \abs{\int_{t_0}^t w(x,\tau)d\tau}^2 e^{2s\varphi} dx dt\leq C \int_Q\sigma^{2k} \abs{w(x,t)}^2 e^{2s\varphi} dxdt,
$$
for any $w\in L^2(Q)$.
\end{lemma}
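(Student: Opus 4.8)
The plan is to view Lemma~\ref{L4.4} as a weighted Hardy-type inequality that exploits the single structural fact that along each vertical segment $\set{x}\times(0,T)$ the weight $\varphi$ attains its maximum at $t=t_0$, so that $e^{2s\varphi}$ decays away from $t_0$ at the rate carried by $\sigma$.

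First I would reduce the statement to a one-dimensional weight estimate. For a.e.\ $x\in\Omega$ put $W(x,t)=\int_{t_0}^t w(x,\tau)\,d\tau$. The Cauchy--Schwarz inequality gives
\[
\abs{W(x,t)}^2\leq \abs{t-t_0}\,\abs{\int_{t_0}^t\abs{w(x,\tau)}^2\,d\tau}\qquad\text{for every }t\in(0,T),
\]
the absolute value around the inner integral covering both signs of $t-t_0$. Multiplying by $\sigma(x,t)^{2k+1}e^{2s\varphi(x,t)}$, integrating in $t$ over $(0,T)$ and applying Fubini's theorem --- using that $\tau$ lies between $t_0$ and $t$ precisely when $t$ lies on the same side of $t_0$ as $\tau$ and $\abs{\tau-t_0}\leq\abs{t-t_0}$ --- reduces everything to the pointwise bound
\[
\int_{I_\tau}\sigma(x,t)^{2k+1}\abs{t-t_0}\,e^{2s\varphi(x,t)}\,dt\;\leq\; C\,\sigma(x,\tau)^{2k}e^{2s\varphi(x,\tau)}\qquad\text{for every }\tau\in(0,T),
\]
where $I_\tau=(\tau,T)$ if $\tau>t_0$ and $I_\tau=(0,\tau)$ if $\tau<t_0$, with $C$ independent of $x,\tau,s,\gamma$; integrating this against $\abs{w(x,\tau)}^2$ in $(\tau,x)$ then yields the lemma.

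Second I would establish the one-dimensional bound by integration by parts. From $\psi_t=-2\beta(t-t_0)$ and $\varphi=e^{\gamma\psi}$ one gets $\p_t\varphi=-2\gamma\beta(t-t_0)\varphi$, hence $\p_t(\sigma^{2k})=-4k\gamma\beta(t-t_0)\sigma^{2k}$ and, crucially,
\[
(t-t_0)\,\sigma(x,t)\,e^{2s\varphi(x,t)}=-\frac{1}{4\beta}\,\p_t\bigl(e^{2s\varphi(x,t)}\bigr).
\]
Taking $\tau>t_0$ (the case $\tau<t_0$ being symmetric), I would write $\sigma^{2k+1}(t-t_0)e^{2s\varphi}=-\tfrac1{4\beta}\sigma^{2k}\p_t(e^{2s\varphi})$, integrate by parts on $(\tau,T)$, and discard the resulting boundary term at $t=T$ together with the term $-k\gamma\int_\tau^T(t-t_0)\sigma^{2k}e^{2s\varphi}\,dt$, both of which are $\leq0$ since $t-t_0>0$ there and $k\geq0$; this leaves
\[
\int_\tau^T\sigma^{2k+1}(t-t_0)e^{2s\varphi}\,dt\;\leq\;\frac{1}{4\beta}\,\sigma(x,\tau)^{2k}e^{2s\varphi(x,\tau)},
\]
so that $C=\tfrac1{4\beta}$ works, a constant depending only on $\beta$ (hence only on $\Omega,T,t_0$).

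I do not expect a genuine obstacle here: the argument is elementary. The only points needing care are the Fubini bookkeeping for the two regimes $t\gtrless t_0$ and the check that $C$ is independent of $s$ and $\gamma$, which is automatic because the single power of $\sigma$ lost on the left is exactly recovered by differentiating the exponential weight, whose $t$-derivative produces precisely the factor $(t-t_0)\sigma$.
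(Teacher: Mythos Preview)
Your argument is correct and rests on exactly the same structural identity as the paper, namely
\[
(t-t_0)\,\sigma\,e^{2s\varphi}=-\tfrac{1}{4\beta}\,\p_t\bigl(e^{2s\varphi}\bigr),
\]
combined with Cauchy--Schwarz and an integration by parts in $t$. The only organisational difference is in how the extra factor $\sigma^{2k}$ is handled. The paper first proves the case $k=0$ and then reduces the general case to it by the substitution $\widetilde w=\sigma^k w$, using the monotonicity $\sigma(x,t)\le\sigma(x,\tau)$ for $\tau$ between $t_0$ and $t$ to pull $\sigma^{2k}$ outside the inner integral. You instead apply Fubini first, reduce to the one-dimensional inequality $\int_{I_\tau}\sigma^{2k+1}\abs{t-t_0}e^{2s\varphi}\,dt\le C\sigma(x,\tau)^{2k}e^{2s\varphi(x,\tau)}$, and treat the $\sigma^{2k}$ factor directly inside the integration by parts, where its $t$-derivative produces the sign-favourable term $-k\gamma(t-t_0)\sigma^{2k}e^{2s\varphi}$. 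Both routes are equally elementary; yours avoids the auxiliary monotonicity argument at the cost of one extra term to discard in the integration by parts, and makes the dependence $C=\tfrac{1}{4\beta}$ explicit and uniform in $s,\gamma$.
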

\begin{proof}
By the Cauchy schwartz inequalities, we obtain
\begin{equation*}
 \int_Q \sigma e^{2 s \varphi} \abs{\int_{t_0}^{t} w(x, \tau) d\tau}^2 dxdt \leq
  \int_Q  \sigma (t - t_0)e^{2 s \varphi} \Big(\int_{t_0}^{t} \abs{ w(x, \tau) }^2\,d\tau\Big)\, dxdt.
 \end{equation*}
Using the fact that,  %$\varphi(x, T) \leq \varphi(x, \tau) ,\quad t_0 \leq \tau \leq T  $,
%$\varphi(x, 0) \leq \varphi(x, \tau) ,\quad  0\leq \tau \leq t_0  $ and
\begin{equation*}
\frac{\p}{\p t} (e^{2 s \varphi}) = - 4 \beta \sigma (t - t_0) e^{2 s \varphi},
\end{equation*}
we get,
\begin{multline*}
 \int_Q  \sigma (t - t_0)e^{2 s \varphi} \Big(\int_{t_0}^{t} \abs{ w(x, \tau) }^2d\tau\Big)\, dxdt
 = -\frac{1}{ 4 \beta} \int_Q \frac{\p}{\p t}(e^{2s \varphi}) \Big(\int_{t_0}^{t} \abs{ w(x, \tau) }^2d\tau\Big)\, dxdt\cr
 = -\frac{1}{ 4 \beta} \int_\Omega \Big(e^{2s \varphi(x,T)} \Big(\int_{t_0}^{T} \abs{ w(x, \tau) }^2d\tau\Big) + e^{2s \varphi(x, 0)} \Big(\int^{t_0}_{0} \abs{ w(x, \tau) }^2d\tau\Big)\Big)\, dx \cr
  + \frac{1}{ 4 \beta} \int_Q e^{2s \varphi}  \abs{ w(x, t) }^2\, dxdt\cr
 \leq \frac{1}{ 4 \beta} \int_Q e^{2s \varphi}  \abs{ w(x, t) }^2\, dxdt.% -
 %\frac{1}{ 4 \beta} \int_\Omega  \int_{0}^{T}e^{2s \varphi(x,\tau)} \abs{ w(x, \tau) }^2d\tau \, dx
\end{multline*}
Then
\begin{equation}\label{1.8}
\int_Q \sigma e^{2 s \varphi} \abs{\int_{t_0}^{t} w(x, \tau) d\tau}^2 dxdt \leq
C \int_Q e^{2s \varphi}  \abs{ w(x, t) }^2\, dxdt.
\end{equation}

Let $k>0$, and let $\widetilde{w}(x, t) = \sigma^k (x,t) w(x, t) $, by (\ref{1.8}), we obtain
\begin{equation*}
\int_Q \sigma e^{2 s \varphi} \abs{\int_{t_0}^{t} \sigma^k (x, \tau) w(x, \tau) d\tau}^2 dxdt \leq
C \int_Q e^{2s \varphi}  \abs{\sigma^k (x, t) w(x, t) }^2\, dxdt.
\end{equation*}
Moreover, using the fact that $\sigma(x,t) \leq \sigma(x, \tau)$ if $t_0 \leq \tau \leq t$ or $t \leq \tau \leq t_0$, we deduce that
\begin{equation}\label{1.9}
\int_Q \sigma^{ 2 k + 1}(x, t) e^{2 s \varphi} \abs{\int_{t_0}^{t} w(x, \tau) d\tau}^2 dxdt \leq
C \int_Q \sigma^{ 2 k }(x, t) e^{2s \varphi}  \abs{ w(x, t) }^2\, dxdt.
\end{equation}
The proof is complete.
\end{proof}
\begin{lemma}\label{L5}
Let $\delta>0$, $T>0$. There exists a constant $C = (1+ \delta^2 T^2 e^{2 \delta T})>0$, such that the following estimate holds
\begin{equation*}
\int_0^ T\abs{\int_{t_0}^ t e^{\delta (\tau - t)} h(\tau) d\tau}^2 dt \leq C \int_0^T\abs{\int_{t_0}^ t h(\tau) d\tau}^2 dt,
\end{equation*}
for any $h\in L^2([0, T])$.
\end{lemma}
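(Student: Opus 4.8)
The plan is to reduce the weighted integral inequality to the unweighted one by integrating by parts, exploiting the fact that the exponential kernel $e^{\delta(\tau-t)}$ is close to $1$ when $\tau$ and $t$ are both in the bounded interval $[0,T]$. First I would set $H(t)=\int_{t_0}^t h(\tau)\,d\tau$, so that $H$ is absolutely continuous with $H'=h$ and $H(t_0)=0$. The quantity on the left is $\int_0^T\bigl|\int_{t_0}^t e^{\delta(\tau-t)}H'(\tau)\,d\tau\bigr|^2\,dt$, and the goal is to bound it by $C\int_0^T|H(t)|^2\,dt$.

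The key step is an integration by parts in $\tau$ inside the inner integral. Writing $\int_{t_0}^t e^{\delta(\tau-t)}H'(\tau)\,d\tau = \bigl[e^{\delta(\tau-t)}H(\tau)\bigr]_{\tau=t_0}^{\tau=t} - \delta\int_{t_0}^t e^{\delta(\tau-t)}H(\tau)\,d\tau$, and using $H(t_0)=0$, this equals $H(t) - \delta\int_{t_0}^t e^{\delta(\tau-t)}H(\tau)\,d\tau$. Thus the inner expression is controlled purely in terms of $H$ itself (no more $H'$ appears). Then I would use $(a+b)^2\leq 2a^2+2b^2$ to split, and for the remaining term estimate $|e^{\delta(\tau-t)}|\leq e^{\delta T}$ for $t_0\leq\tau\leq t$ (or $t\leq\tau\leq t_0$), followed by Cauchy--Schwarz on the $\tau$-integral over an interval of length at most $T$:
\begin{equation*}
\Big|\delta\int_{t_0}^t e^{\delta(\tau-t)}H(\tau)\,d\tau\Big|^2 \leq \delta^2 T e^{2\delta T}\int_0^T|H(\tau)|^2\,d\tau.
\end{equation*}
Integrating this bound in $t$ over $[0,T]$ produces a further factor $T$, giving $\delta^2 T^2 e^{2\delta T}\int_0^T|H(\tau)|^2\,d\tau$. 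Combining with the $H(t)$ term yields the claimed constant $C=1+\delta^2T^2e^{2\delta T}$ (up to the harmless factor $2$, which one absorbs or tracks explicitly).

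I do not expect a genuine obstacle here; the only point requiring a little care is the sign/orientation of the inner integral when $t<t_0$, so that the length of the integration interval is $|t-t_0|\leq T$ and the exponent $\delta(\tau-t)$ stays bounded by $\delta T$ in absolute value regardless of the ordering of $t_0$ and $t$. Once the integration by parts has removed the derivative on $h$, everything else is a routine application of Cauchy--Schwarz and crude bounds on the bounded kernel, so the estimate follows directly.
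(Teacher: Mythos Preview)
Your proof is correct and, in fact, slightly more direct than the paper's. The paper takes an ODE route: setting $g(t)=\int_{t_0}^t e^{\delta(\tau-t)}h(\tau)\,d\tau$, it observes that $g'=h-\delta g$, rewrites this as the integral equation $g(t)=H(t)-\delta\int_{t_0}^t g(s)\,ds$ (with $H(t)=\int_{t_0}^t h$), and then applies Gr\"onwall's lemma to obtain $|g(t)|\le |H(t)|+\delta\int_{t_0}^t |H(\tau)|\,e^{\delta(t-\tau)}\,d\tau$, after which it performs the same Cauchy--Schwarz and $t$-integration that you do. Your integration by parts yields the \emph{exact} identity $g(t)=H(t)-\delta\int_{t_0}^t e^{\delta(\tau-t)}H(\tau)\,d\tau$ in one step, bypassing Gr\"onwall entirely and even giving a marginally sharper kernel (your exponential is $\le 1$ when $t\ge t_0$, whereas the paper's is $\ge 1$). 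Both arguments pick up the same factor $2$ from $(a+b)^2\le 2a^2+2b^2$ when squaring, so the precise constant $C=1+\delta^2T^2e^{2\delta T}$ stated in the lemma is not literally achieved by either proof; this is harmless since only the existence of some constant is used downstream.
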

\begin{proof}
For $t \geq t_0$, denote 
$$
f(t) = \int_{t_0}^{t} e^{\delta \tau} h(\tau) d\tau,\quad \textrm{and}\quad g(t) = e^{- \delta t} f(t).
$$
Using the fact that, $g' (t) = h(t) - \delta g(t) $, we deduce that
\begin{equation*}
 g(t) = \int_{t_0}^ t g' (s) ds = \int_{t_0}^ t h(s) ds - \delta \int_{t_0}^ t g(s) ds .
\end{equation*}
Then,
\begin{equation*}
\abs{ g(t)} \leq \abs{\int_{t_0}^ t h(s) ds}+  \delta \int_{t_0}^ t \abs{g(s)} ds .
\end{equation*}
Applying Gr\"onwall's Lemma, we get
\begin{equation*}
\abs{ g(t)} \leq \abs{\int_{t_0}^ t h(s) ds}+ \delta\int_{t_0}^ t \abs{\int_{t_0}^\tau h(s) ds} e^{\delta (t - \tau)} d\tau.
%\leq \abs{\int_{t_0}^ t h(s) ds}+ \delta e^{\delta T}\int_{t_0}^ t \abs{\int_{t_0}^\tau h(s) ds}  ds
\end{equation*}
Then,
\begin{multline*}
\int_{0}^T \abs{ g(t)}^2 dt \leq \int_0^T \abs{\int_{t_0}^ t h(s) ds}^2 dt+ \delta^2 e^{2 \delta T}\int_0^T \Big(\int_{t_0}^T \abs{\int_{t_0}^\tau h(s) ds}  d\tau\Big)^2dt \cr
\leq \int_0^T \abs{\int_{t_0}^ t h(s) ds}^2 dt+ \delta^2 T e^{2 \delta T}  \Big(\int_{t_0}^T \abs{\int_{t_0}^\tau h(s) ds}d\tau\Big)^2 \cr
%using Cauchy schwartz inequality
\leq (1 + \delta^2 T^2 e^{2 \delta T} ) \int_0^T \abs{\int_{t_0}^ t h(s) ds}^2 dt.
\end{multline*}
For $t \leq t_0$, denote 
$$f(t) = \int_{t}^{t_0} e^{\delta \tau} h(\tau) d\tau,\quad \textrm{and}\quad g(t) = e^{- \delta t} f(t).
$$
Using the fact that, $g'(t) = - h(t) - \delta g(t) $, we deduce that
\begin{equation*}
 g(t) = -\int^{t_0}_t g'(s) ds = \int^{t_0}_t h(s) ds + \delta \int^{t_0}_t g(s) ds .
\end{equation*}
From the same argument, we conclude
\begin{equation*}
\int_{0}^T \abs{ g(t)}^2 dt \leq (1 + \delta^2 T^2 e^{2 \delta T} ) \int_0^T \abs{\int_{t_0}^ t h(s) ds}^2 dt.
\end{equation*}
The proof is complete.
\end{proof}
Now, we give the proof of Theorem \ref{T.3}.
\medskip

We denote by $P_1$ the parabolic operator given by:
\begin{equation}\label{1.10}
P_1(u) = u_t - \dive(\lambda^*(x) \nabla u).
\end{equation}
Then (\ref{4.13}) can be written as
\begin{equation}\label{1.11}
\mathcal{P}(v) :=  \p_t^2v-(2\mu+\lambda)\Delta v - \dive(\lambda^*(x)\nabla \p_t v)= P_1(v_t) - (2 \mu + \lambda) \Delta v=f
\end{equation}
We deduce that
\begin{equation}\label{1.13}
P_1(v_t) = f + (2 \mu + \lambda) \Delta v.
\end{equation}
Applying the parabolic Carleman estimate give, by Lemma \ref{L4.2} to $v_t$, we obtain
%k=0
\begin{equation}\label{1.12}
C\gamma \int_Q \Big(\sigma\abs{\nabla v_t(x,t)}^2
+ \sigma^{3}\abs{v_t}^2\Big)e^{2s\varphi} dx\,dt
%\leq \int_Q\sigma^k\abs{\para{\mathcal{P}_1(v_t)}}^2e^{2s\varphi}
%dx\,dt\cr
\leq \int_Q\abs{f}^2e^{2s\varphi}
dx\,dt + C\int_Q\abs{\Delta v}^2e^{2s\varphi}
dx\,dt
\end{equation}
Applying Lemma \ref{L4.4} to $v_t$, with $k=3/2$, we deduce
%k=3/2
$$
\int_Q\sigma^{4} \abs{\int_{t_0}^t v_t(x,\tau)d\tau}^2 e^{2s\varphi} dx dt\leq C \int_Q\sigma^{3} \abs{v_t(x,t)}^2 e^{2s\varphi} dxdt.
$$
Then, we have
\begin{equation}\label{2.1}
\int_Q\sigma^{4} \abs{v}^2 e^{2s\varphi} dx dt\leq C \int_Q\sigma^{3} \abs{v_t}^2 e^{2s\varphi} dxdt +
C \int_Q\sigma^{4} \abs{v(x,t_0)}^2 e^{2s\varphi} dx dt.
\end{equation}
From the same argument, we conclude that
%k=1/2
\begin{equation}\label{2.2}
\int_Q\sigma^{2} \abs{\nabla v}^2 e^{2s\varphi} dx dt\leq C \int_Q\sigma \abs{\nabla v_t}^2 e^{2s\varphi} dxdt +
C \int_Q\sigma^{2} \abs{\nabla v(x,t_0)}^2 e^{2s\varphi} dx dt.
\end{equation}
Collecting (\ref{2.1}) and (\ref{2.2}), we get
\begin{multline}\label{2.3}
\int_Q (\sigma^{4} \abs{v}^2  + \sigma^{2}\abs{\nabla v}^2) e^{2s\varphi}dx dt\leq C \int_Q (\sigma^3 \abs{v_t}^2  + \sigma\abs{\nabla v_t}^2) e^{2s\varphi}dxdt\cr
\;\; + C \int_Q (\sigma^4 \abs{v(x,t_0)}^2  + \sigma^{2}\abs{\nabla v(x,t_0)}^2)  e^{2s\varphi} dx dt.
\end{multline}
Inserting (\ref{1.12}) in (\ref{2.3}), we obtain
\begin{multline}\label{2.30}
\int_Q (\sigma^{4} \abs{v}^2  + \sigma^{2}\abs{\nabla v}^2) e^{2s\varphi} dx dt\leq C\gamma^{-1}\int_Q\abs{f}^2e^{2s\varphi}
dx\,dt \cr
+ C\gamma^{-1}\int_Q\abs{\Delta v}^2 e^{2s\varphi}
dx\,dt
 + C \int_Q e^{2s\varphi}(\sigma^4 \abs{v(x,t_0)}^2  + \sigma^{2}\abs{\nabla v(x,t_0)}^2)dx dt.
\end{multline}
Collecting (\ref{2.30}) and (\ref{1.12}), we get
\begin{multline}\label{2.11}
\int_Q e^{2s\varphi}
\Big(\sigma^{4} \abs{v}^2  + \sigma^{2}\abs{\nabla v}^2 + \sigma^{3} \abs{v_t}^2  + \sigma\abs{\nabla v_t}^2 \Big) dx dt
\leq C \gamma^{-1} \int_Q\abs{f}^2e^{2s\varphi}
dx\,dt\cr
+ C \gamma^{-1} \int_Q\abs{\Delta v}^2e^{2s\varphi}
dx\,dt + C \int_Q e^{2s\varphi}(\sigma^{2}\abs{\nabla v(x,t_0)}^2+ \sigma^4 \abs{v(x,t_0)}^2)dx dt.
\end{multline}
In order to estimate the second term in (\ref{2.11}), denote $z(x,t) = \Delta v(x,t) $, by (\ref{1.11}) $z$ satisfy the following differential equation
\begin{equation*}
 z_t + \delta(x) z =  \frac{1}{\lambda^*(x)}(v_{tt} - \nabla \lambda^* (x).\nabla v_t  - f ) := G(x,t),
\end{equation*}
with $\,\delta(x) =(2 \mu(x) + \lambda(x))/\lambda^*(x)$.\\
%\begin{equation*}
%\dive(\lambda^* \nabla v_t) + (2 \mu + \lambda) \Delta v = v_{tt} - f.
%\end{equation*}
Then using Duhamel Formula, $z$ takes the form
\begin{equation}\label{2.13}
z(x,t)= z(x,t_0) e^{\delta(x) (t_0 - t)}
 + \displaystyle \int_{t_0}^t G(x,\tau) e^{\delta(x) (\tau - t)} d\tau.
\end{equation}
Let now compute the last integral in (\ref{2.13}). \\
We have
\begin{equation*}
\displaystyle \int_{t_0}^t G(x,\tau) e^{\delta(x) (\tau - t)} d\tau =
 \displaystyle\int_{t_0}^{t} \displaystyle\frac{1}{\lambda^*(x)} \para{\frac{\p}{\p t}(v_{t}(x, \tau) - \nabla\lambda^*(x).\nabla v(x, \tau)) - f } e^{\delta(x)  (\tau - t)}d\tau.
\end{equation*}
By integration by parts, we deduce that
\begin{multline}\label{2.4}
\displaystyle \int_{t_0}^t G(x,\tau) e^{\delta(x) (\tau - t)} d\tau =
  \displaystyle\frac{1}{\lambda^*(x)} \Big((v_{t}(x,t) - \nabla\lambda^*(x) \nabla v(x, t)) - (v_{t}(x,t_0) -  \nabla\lambda^*(x)\nabla v(x, t_0)) \Big) e^{\delta(x) ( t_0 - t)} \cr
 - \displaystyle\frac{\delta(x)}{\lambda^*(x)} \displaystyle\int_{t_0}^{t} (v_{t}(x, \tau) - \nabla\lambda^*(x).\nabla v(x, \tau)) e^{\delta(x)  (\tau - t)}d\tau
  -  \displaystyle\frac{1}{\lambda^*(x)}\displaystyle \int_{t_0}^{t}  f(x,\tau) e^{\delta(x)  (\tau - t)}d\tau .
\end{multline}
%Moreover,
%\begin{multline}\label{2.4}
%z(x,t) %&=& \Delta v(x,t_0) e^{-\frac{(2 \mu + \lambda)}{\lambda^*} (t - t_0)}
% + \frac{1}{\lambda^*} \int_{t_0}^{t} v_{tt}(x, \tau) e^{-\frac{(2 \mu + \lambda)}{\lambda^*} (t - \tau)}d\tau\cr
% &&- \frac{1}{\lambda^*} \int_{t_0}^{t}  f(x,\tau) e^{-\frac{(2 \mu + \lambda)}{\lambda^*} (t - \tau)}d\tau\cr
% = z(x,t_0) e^{\delta(x) (t_0 - t)}
% + \displaystyle\frac{1}{\lambda^*(x)} \Big(  v_{t}(x,t) - v_{t}(x,t_0) e^{\delta(x) ( t_0 - t)} - \delta(x) \displaystyle\int_{t_0}^{t} v_{t}(x, \tau) e^{\delta(x)  (\tau - t)}d\tau\cr
%   -  \nabla\lambda^*(x) \nabla v(x, t)
% +  \nabla\lambda^*(x)\nabla v(x, t_0)e^{\delta(x) ( t_0 - t)}
% + \delta(x)  \int_{t_0}^{t} \nabla\lambda^*(x).\nabla v(x, \tau) e^{\delta(x)  (\tau - t)}d\tau\cr
% - \displaystyle \int_{t_0}^{t}  f(x,\tau) e^{\delta(x)  (\tau - t)}d\tau\Big) .
%\end{multline}
%We deduce that,
%\begin{eqnarray}\label{2.5}
%\int_Q e^{2 s \varphi} \abs{z(x,t)}^2 dxdt &\leq& C \int_Q e^{2 s \varphi}  \abs{ z(x,t_0)}^2 dxdt
% + C\int_Q e^{2 s \varphi} \abs{ v_{t}(x,t_0)}^2 dxdt\cr
% && +  C \int_Q e^{2 s \varphi} \abs{ v_{t}(x,t)}^2 dxdt \cr
% &&+ C \int_Q e^{2 s \varphi} \abs{\int_{t_0}^{t} v_{t}(x, \tau) e^{K(x)\tau}d\tau}^2 dxdt\cr
% &&+ C\int_Q e^{2 s \varphi}  \abs{ \int_{t_0}^{t}  f(x,\tau) e^{K(x)\tau}d\tau}^2 dxdt.
%\end{eqnarray}
Using Lemma \ref{L5}, we deduce that
\begin{multline}\label{4}
\int_Q \abs{\Delta v}^2 e^{2 s \varphi}  dxdt \leq C\Big( \int_Q \para{\abs{ \Delta v(x,t_0)}^2 + \abs{ v_{t}(x,t_0)}^2 + \abs{\nabla v(x,t_0)}^2 } e^{2 s \varphi} dxdt \cr
  +  \int_Q  \para{\abs{ v_{t}}^2 + \abs{\nabla v}^2} e^{2 s \varphi} dxdt
 + \int_Q \abs{\int_{t_0}^{t} v_{t}(x, \tau)d\tau}^2 e^{2 s \varphi} dxdt\cr
+ \int_Q \abs{\int_{t_0}^{t} \nabla v (x, \tau)d\tau}^2 e^{2 s \varphi} dxdt + \int_Q  \abs{ \int_{t_0}^{t}  f(x,\tau)d\tau}^2 e^{2 s \varphi} dxdt\Big).
\end{multline}
Consequently, we find
\begin{multline}\label{2.6}
\int_Q e^{2s\varphi}
\Big(\sigma^{4} \abs{v}^2  + \sigma^{2}\abs{\nabla v}^2 + \sigma^{3} \abs{v_t}^2  + \sigma\abs{\nabla v_t}^2 \Big) dx dt
\leq C \gamma^{-1} \int_Q\abs{f}^2e^{2s\varphi}
dx\,dt\cr
 + C \int_Q e^{2s\varphi}(\sigma^{2}\abs{\nabla v(x,t_0)}^2+ \sigma^4 \abs{v(x,t_0)}^2  +
  \gamma^{-1}\abs{ v_t(x,t_0)}^2 + \gamma^{-1}\abs{\Delta v(x,t_0)}^2)dx dt.
\end{multline}
This completes the proof of Theorem \ref{T.3}.
\subsection{Carleman estimate for hyperbolic equation}
We recall the following Carleman estimate for a scalar hyperbolic equation. As for the proof, we
can refer to Bellassoued and Yamamoto  \cite{[BY]} and Isakov and Kim \cite{[IK08], [IK09]} which gives a direct proof by integration by parts.
\begin{lemma}\label{L3}
Let $\mu >0$ satisfy $\frac{3}{2}\abs{\nabla\para{\log \mu}}\abs{x-x_0}\leq 1.$
There exist constants $C>0$ and $\gamma_*>0$ such that for any $\gamma > \gamma_*$, there
exist $s_* = s_*(\gamma)$ such that for all  $s\geq s_*$ the following Carleman estimate
holds
\begin{equation}\label{3.0}
C\int_Q  \sigma \Big( \abs{\nabla y}^2
+ \abs{y_t}^2 + \sigma^{2}\abs{y}^2\Big)e^{2s\varphi} dx\,dt
\leq \int_Q \abs{y_{tt} - \mu(x) \Delta y}^2e^{2s\varphi}
dx\,dt,
\end{equation}
for any $y\in H^1(Q)$ with compact support in $Q$.
\end{lemma}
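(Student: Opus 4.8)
This is a hyperbolic Carleman estimate with a second large parameter, and I would prove it by the classical conjugation-and-symmetrization scheme (the route of Isakov--Kim and Bellassoued--Yamamoto cited after the statement). Write $P=\d_t^2-\mu(x)\Delta$, set $w=e^{s\varphi}y$, and compute $P_\varphi w := e^{s\varphi}P(e^{-s\varphi}w)$, which gives
\[
P_\varphi w = \d_t^2 w-\mu\Delta w-2s\varphi_t\d_t w+2s\mu\nabla\varphi\cdot\nabla w+s^2\para{\varphi_t^2-\mu\abs{\nabla\varphi}^2}w-s\para{\varphi_{tt}-\mu\Delta\varphi}w .
\]
Because $\varphi=e^{\gamma\psi}$ with $\psi(x,t)=\abs{x-x_0}^2-\beta\para{(t-t_0)^2-M}$, every derivative falling on $\varphi$ produces a factor $\gamma\varphi$ together with an explicit polynomial in $x-x_0$ and $t-t_0$ (here $\nabla\psi=2(x-x_0)$, $\psi_t=-2\beta(t-t_0)$, $\nabla^2\psi=2I$, $\psi_{tt}=-2\beta$); this is exactly why $\gamma$ is a genuine large parameter that can be used to absorb lower-order terms carrying the derivatives of $\mu$.

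Next I would split $P_\varphi w=Aw+Bw+Rw$, where $Aw=\d_t^2w-\mu\Delta w+s^2\para{\varphi_t^2-\mu\abs{\nabla\varphi}^2}w$ is the formally self-adjoint part, $Bw=-2s\varphi_t\d_t w+2s\mu\nabla\varphi\cdot\nabla w+s\,c_0\,w$ is the formally skew-adjoint part (the real zeroth-order coefficient $c_0$ being a free parameter to be tuned), and $Rw$ collects genuinely lower-order terms. From $\norm{P_\varphi w}_{L^2(Q)}^2=\norm{Aw}^2+\norm{Bw}^2+2(Aw,Bw)_{L^2(Q)}+(\text{$Rw$ cross terms})$ the estimate reduces to showing that $2(Aw,Bw)_{L^2(Q)}$ controls, modulo $\norm{Aw}^2$ and absorbable remainders, a positive definite quadratic form
\[
\int_Q\para{s^3\gamma^4\varphi^3\,\kappa_0\abs{w}^2+s\gamma^2\varphi\,\kappa_1\abs{\nabla w}^2+s\gamma^2\varphi\,\kappa_2\abs{\d_t w}^2}dx\,dt,\qquad \kappa_i>0 .
\]
All boundary terms produced by the integrations by parts in $x$ and $t$ vanish since $y$, hence $w$, is compactly supported in $Q$.

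The computation of $(Aw,Bw)_{L^2(Q)}$ is the crux. Integrating by parts to move all derivatives onto a single factor turns the six products into integrals of $\abs{w}^2$, $\abs{\nabla w}^2$, $\abs{\d_t w}^2$ against coefficients built from first and second derivatives of $\varphi$ (equivalently of $\psi$, with extra powers of $\gamma$). Collecting the top-order coefficients: the $\abs{\nabla w}^2$ coefficient is, to leading order, $s\gamma^2\varphi$ times a positive multiple of $\mu\abs{\nabla\psi}^2+(\text{Hessian contributions})$, whose positivity survives the only adverse term, $\nabla\mu\cdot\nabla\psi$, precisely because $\tfrac32\abs{\nabla\para{\log\mu}}\abs{x-x_0}\le1$; the $\abs{\d_t w}^2$ and $\abs{w}^2$ coefficients are governed by $\psi_{tt}=-2\beta$ and by the competition between $\psi_t^2$ and $\mu\abs{\nabla\psi}^2$, which is won by the latter on $\overline{\Omega}\times(0,T)$ since by construction of $\psi$ one has $0<\beta<r_0<\mu_0\le\mu$. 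This is simply the Hörmander pseudoconvexity of $\psi$ for the operator $\d_t^2-\mu\Delta$ under the stated hypotheses. Once the pointwise quadratic form is uniformly positive on $\overline{Q}$, I would choose $\gamma$ large (so that the remainder $Rw$ and every error term in which a derivative lands on $\mu$ — each carrying strictly fewer powers of $\gamma$ — are absorbed) and then $s$ large; this yields $C\int_Q\sigma\para{\abs{\nabla w}^2+\abs{\d_t w}^2+\sigma^2\abs{w}^2}dx\,dt\le\int_Q\abs{P_\varphi w}^2\,dx\,dt$, and undoing $w=e^{s\varphi}y$ (using $\abs{\nabla y}^2e^{2s\varphi}\le C(\abs{\nabla w}^2+\sigma^2\abs{w}^2)$, and similarly for $y_t$) gives (\ref{3.0}).

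The main obstacle is precisely the bookkeeping in the cross term with two large parameters: one must verify that every "good" integrand really carries $\gamma^2$ (or $\gamma^4$ for the $\abs{w}^2$ term) while every term in which a derivative hits $\mu$, $\nabla\mu$ or $\Delta\mu$ rather than $\varphi$ carries at most $\gamma$ (hence is absorbable for $\gamma\gg1$), and that the resulting pointwise $3\times3$ quadratic form in $(w,\nabla w,\d_t w)$ is positive on all of $\overline{Q}$ under $\tfrac32\abs{\nabla\para{\log\mu}}\abs{x-x_0}\le1$ and $0<\beta<r_0$. A more computational but self-contained alternative avoids the $A/B$ decomposition and instead multiplies $P_\varphi w$ directly by $2\para{\mu\nabla\varphi\cdot\nabla w-\varphi_t\d_t w+c_0w}$, integrates over $Q$, and reorganizes the resulting pointwise identity into a divergence (vanishing by compact support) plus the desired positive bulk form plus lower-order terms; this is the same pseudoconvexity estimate written without operator splitting.
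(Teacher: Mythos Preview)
The paper does not give its own proof of this lemma: it simply refers to Bellassoued--Yamamoto \cite{[BY]} and Isakov--Kim \cite{[IK08],[IK09]} for the argument. Your proposal is precisely the conjugation $w=e^{s\varphi}y$, self-adjoint/skew-adjoint splitting, and integration-by-parts computation of $(Aw,Bw)_{L^2(Q)}$ carried out in those references, with the pseudoconvexity verification using $\nabla^2\psi=2I$, $\psi_{tt}=-2\beta$, $0<\beta<r_0\le\mu$, and the hypothesis $\tfrac32\abs{\nabla(\log\mu)}\abs{x-x_0}\le1$ to control the $\nabla\mu\cdot\nabla\psi$ contribution; so your sketch is correct and coincides with the cited approach.
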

\subsection{Completion of Carleman estimate for Biot's system}
Let $(\vv, y)$ be a solution of Biot's model in poro-elasticity (\ref{2.10}) satisfying (\ref{2.12}).
\medskip

Let $v(x,t) =  \dive \,\vv(x,t)$, we apply $\dive$ to the first equation in (\ref{2.10}), we can derive the following equation
\begin{equation}\label{3.1}
 v_{tt}-(2\mu+\lambda)\Delta v- \dive(\lambda^*\nabla  v_t) - \nabla \lambda^*. \nabla v_t + \varrho_1 \Delta y + \nabla\varrho_1\cdot\nabla y = \dive\, \ff
\qquad \textrm{in}\quad Q.
\end{equation}
Applying Theorem \ref{T.3} to (\ref{3.1}),
\begin{multline}\label{3.2}
C \int_Q\Big(\sigma^2\abs{\nabla v}^2
+ \sigma^4\abs{v}^2+\sigma\abs{\nabla v_t}^2+\sigma^3\abs{v_t}^2\Big)
e^{2s\varphi}dxdt
\leq \gamma^{-1}\int_Q\abs{\dive\, \ff}^2
e^{2s\varphi}dxdt\cr
+ \int_{Q}\para{ \gamma^{-1}\abs{\Delta v(x,t_0)}^2
+ \gamma^{-1}\abs{v_t(x,t_0)}^2+\sigma^4\abs{v(x,t_0)}^2
+ \sigma^2\abs{\nabla v(x,t_0)}^2}e^{2s\varphi}dx dt\cr
+ \gamma^{-1} \int_Q (\abs{\Delta y}^2 + \abs{\nabla y}^2) e^{2s\varphi}dxdt.
\end{multline}
Now, let $w(x,t) = \text{rot}\; \vv(x,t)$, similarly we apply rot
to the first equation in (\ref{2.10}), we can derive the following equation
\begin{equation}\label{3.3}
 w_{tt}-\mu\Delta w= \text{rot}\; \ff
\qquad \textrm{in}\quad Q.
\end{equation}
Applying  Lemma \ref{L3} of the hyperbolic Carleman estimate, we obtain
\begin{equation}\label{3.4}
C\int_Q  \sigma \Big( \abs{\nabla w}^2
+ \abs{w_t}^2+ \sigma^{2}\abs{w}^2\Big)e^{2s\varphi} dx\,dt
\leq \int_Q \abs{\text{rot}\; \ff}^2e^{2s\varphi}
dx\,dt.
\end{equation}
We recall that, the first equation in (\ref{2.10}), can be written
\begin{equation}\label{2.9}
 \vv_{tt}(x,t)-\mu(x) \Delta \vv = f_0(x,t)
\qquad \textrm{in}\quad Q,
\end{equation}
where,
$$f_0(x,t) =  \ff(x,t)+ (\mu + \lambda) \nabla v + v \nabla \lambda + (\nabla \vv+ (\nabla \vv )^T)\nabla \mu  + \nabla(\lambda^*(x) v_t)- \varrho_1(x) \nabla y. $$
Applying again Lemma \ref{L3} of the hyperbolic Carleman estimate, we obtain
\begin{multline}\label{3.7}
C\int_Q  \sigma \Big( \abs{\nabla \vv}^2
+ \abs{\vv_t(x,t)}^2 + \sigma^{2}\abs{\vv}^2\Big)e^{2s\varphi} dx\,dt
\leq \int_Q \abs{\ff}^2e^{2s\varphi}
dx\,dt\cr
 + \int_Q \Big(\abs{\nabla v_t}^2+ \abs{v_t}^2+  \abs{\nabla v}^2+ \abs{v}^2+ \abs{\nabla y}^2  \Big)e^{2s\varphi}
dx\,dt.
\end{multline}
%Therefore, we get
Collecting (\ref{3.2}), (\ref{3.4}) and (\ref{3.7}), we get
\begin{multline}\label{3.5}
C\int_Q  \Big(\sigma ( \abs{\nabla_{x,t} \vv}^2 + \abs{\nabla_{x,t} w}^2 + \sigma\abs{\nabla v}^2+\sigma^2\abs{v_t}^2 + \abs{\nabla v_t}^2 )
+  \sigma^{3}( \abs{\vv}^2 + \sigma\abs{v}^2 + \abs{w}^2) \Big)e^{2s\varphi} dx\,dt\cr
\leq \int_Q (\abs{\ff}^2 + \abs{\nabla\ff}^2) e^{2s\varphi}
dx\,dt
+  \int_Q ( \gamma^{-1} \abs{\Delta y}^2 + \abs{\nabla y}^2 )e^{2s\varphi}dxdt\cr
+ \int_{Q}\para{ \gamma^{-1}\abs{\Delta v(x,t_0)}^2
+ \gamma^{-1}\abs{v_t(x,t_0)}^2+\sigma^4\abs{v(x,t_0)}^2
+ \sigma^2\abs{\nabla v(x,t_0)}^2}e^{2s\varphi}dx dt.
\end{multline}
We recall that, the second equation in (\ref{2.10}) is given by:
\begin{equation*}
 y_{t}(x,t)- \Delta y (x,t) + \varrho_2(x)  \dive\, \vv_t (x,t) = h (x,t)
\qquad \textrm{in}\quad Q.
\end{equation*}
Furthermore, applying  Lemma \ref{L4.2} of the parabolic Carleman estimate, we obtain
\begin{multline}\label{3}
C\gamma \int_Q \Big(  \abs{\Delta y}^2
+ \sigma^2 \abs{\nabla y}^2
+ \sigma^{4}\abs{y}^2\Big)e^{2s\varphi} dx\,dt\cr
\leq \int_Q \sigma \abs{h}^2e^{2s\varphi}
dx\,dt + \int_Q \sigma\abs{\dive\, \vv_t}^2e^{2s\varphi}
dx\,dt.
\end{multline}
Then we have,
\begin{multline}\label{3.8}
C \gamma^{-1} \int_Q \Big(\abs{\Delta y}^2
+ \sigma^2 \abs{\nabla y}^2
+ \sigma^{4}\abs{y}^2\Big)e^{2s\varphi} dx\,dt\cr
\leq  \gamma^{-2}\int_Q  \sigma \abs{h}^2e^{2s\varphi}
dx\,dt + \gamma^{-2}\int_Q \sigma \abs{v_t}^2e^{2s\varphi}
dx\,dt.
\end{multline}
Inserting (\ref{3.8}) into (\ref{3.5}), we find
\begin{multline}\label{3.50}
C\int_Q  \Big( \sigma\abs{\nabla_{x,t} \vv}^2 + \sigma\abs{\nabla_{x,t} w }^2+ \sigma^2\abs{\nabla v}^2+\sigma^3\abs{v_t}^2 + \sigma\abs{\nabla v_t}^2
+  \sigma^{3} \abs{\vv}^2 + \sigma^3 \abs{w}^2 + \sigma^4\abs{ v}^2
\Big)e^{2s\varphi} dx\,dt\cr
\leq \int_Q (\abs{\ff}^2 + \abs{\nabla \ff}^2 + \gamma^{-2}\sigma \abs{h}^2 ) e^{2s\varphi}
dx\,dt\cr
+ \int_{Q}\para{ \gamma^{-1}\abs{\Delta v(x,t_0)}^2
+ \gamma^{-1}\abs{v_t(x,t_0)}^2+\sigma^4\abs{v(x,t_0)}^2
+ \sigma^2\abs{\nabla v(x,t_0)}^2}e^{2s\varphi}dx dt.
\end{multline}
We deduce that
\begin{multline}\label{4.0}
C\int_Q  \Big(\sigma \abs{\nabla_{x,t} \vv }^2
+  \sigma^{3}\abs{\vv }^2
+ \sigma^4 \abs{ v}^{2} + \sigma^3 \abs{ v_t}^{2} + \sigma^2 \abs{  \nabla v}^{2}  + \sigma \abs{ \nabla v_t}^{2} \Big)e^{2s\varphi} dx\,dt\cr
\leq \int_Q (\abs{\ff}^2 + \abs{\nabla \ff}^2 + \gamma^{-2}\sigma \abs{h}^2 ) e^{2s\varphi}
dx\,dt\cr
+ \int_{Q}\para{ \gamma^{-1}\abs{\Delta v(x,t_0)}^2
+ \gamma^{-1}\abs{v_t(x,t_0)}^2+\sigma^4\abs{v(x,t_0)}^2
+ \sigma^2\abs{\nabla v(x,t_0)}^2}e^{2s\varphi}dx dt.
\end{multline}
Additionally (\ref{3}) and (\ref{4.0}), we find that
\begin{multline}\label{4.1}
C\int_Q  \Big(\sigma \abs{\nabla_{x,t} \vv}^2
+  \sigma^{3}\abs{\vv}^2 + \sigma^4 \abs{ v}^{2}
+ \sigma^3 \abs{ v_t}^{2} + \sigma^2 \abs{  \nabla v}^{2}  + \sigma \abs{ \nabla v_t}^{2}\cr
+ \abs{\Delta y}^2
+ \sigma^2 \abs{\nabla y}^2
+ \sigma^{4}\abs{y}^2\Big)e^{2s\varphi} dx\,dt\cr
\leq \int_Q (\abs{\ff}^2 + \abs{\nabla \ff}^2 + \gamma^{-1}\sigma \abs{h}^2 ) e^{2s\varphi}
dx\,dt\cr
+ \int_{Q}\para{ \gamma^{-1}\abs{\Delta v(x,t_0)}^2
+ \gamma^{-1}\abs{v_t(x,t_0)}^2+\sigma^4\abs{v(x,t_0)}^2
+ \sigma^2\abs{\nabla v(x,t_0)}^2}e^{2s\varphi}dx dt.
\end{multline}
The proof of Theorem \ref{t1} is complete
%%%%%%%%%%%%%%%%%%%%%%%%%%%%%%%%%%%%%%%%%%%%%%
%%%%%%%%%%%%%%%%%%%%%%%%%%%%%%%%%%%%%%%%%%%%%%
%%%%%%%%%%%%%%%%%%%%%%%%%%%%%%%%%%%%%%%%%%%%%%
\section{Proof of the main results}
\setcounter{equation}{0}
%%%%%%%%%%%%%%%%%%%%%%%%%%%%%%%%%%
This section is devoted to the proof of Theorem \ref{T.1} and Theorem \ref{T.2}. The idea of the proof is based on the Carleman estimate method.\\
A usual methodology by Bellassoued and Yamamoto \cite{[B&Y]}, yields an estimate of a source term $\lambda^*(x)$ and the two spatially varying density.
\subsection{Preliminaries estimate}
\begin{lemma}\label{L2}
Let $\omega$ be an open subdomain of $\Omega$ with regular boundary $\p \omega\supset\Gamma$. There exists constants $\gamma_*$, $s_*$
and $C >0$ such that for any $s \geq s_*$ and any $\gamma \geq \gamma_*$ the following estimate holds:
\begin{equation}
\int_{\omega\times (0, T)} \sigma^4 \abs{v}^2 e^{2 s \varphi} dxdt \leq C \int_{\omega\times (0, T)} \sigma^2 \abs{\nabla v}^2 e^{2 s \varphi} dxdt
\end{equation}
for any $v \in H^1(\omega\times (0, T))$ such that $v(x,t) = 0 $ on $\p\omega \times (0, T)$.
\end{lemma}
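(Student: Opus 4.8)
The plan is to establish this estimate by a Poincaré-type argument adapted to the weighted norm, exploiting the fact that $v$ vanishes on $\partial\omega\times(0,T)$ for each fixed $t$. Fix $t\in(0,T)$ and work on the spatial slice $\omega$. I would like to compare $\int_\omega \sigma^4|v|^2 e^{2s\varphi}\,dx$ with $\int_\omega \sigma^2|\nabla v|^2 e^{2s\varphi}\,dx$. The natural device is to integrate by parts against a suitable vector field. Concretely, I would choose the vector field $a(x)=x-x_0$ (or more precisely $\nabla\vartheta(x)/2 = x-x_0$), which satisfies $\dive\, a = 3$ and is smooth and bounded on $\overline\Omega$, and write
\[
\int_\omega \sigma^4 |v|^2 e^{2s\varphi}\,dx = \tfrac{1}{3}\int_\omega \sigma^4 |v|^2 e^{2s\varphi}\,\dive\, a\,dx.
\]
Integrating by parts and using $v=0$ on $\partial\omega$ (so the boundary term vanishes), the right-hand side becomes $-\tfrac13\int_\omega a\cdot\nabla\bigl(\sigma^4 |v|^2 e^{2s\varphi}\bigr)\,dx$, which expands into terms involving $\nabla v$, $\nabla\sigma$ and $\nabla\varphi$.

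The key point is the behaviour of the weight derivatives. Since $\varphi = e^{\gamma\psi}$ and $\psi$ depends on $x$ only through $\vartheta(x)=|x-x_0|^2$, we have $\nabla\varphi = \gamma\varphi\nabla\psi = \gamma\varphi\nabla\vartheta = 2\gamma\varphi\,(x-x_0)$, hence $a\cdot\nabla\varphi = 2\gamma\varphi\,|x-x_0|^2 \ge 0$ on $\overline\Omega$, and $\sigma = s\gamma\varphi$ so $a\cdot\nabla(\sigma^4 e^{2s\varphi}) = \bigl(4\sigma^3 (a\cdot\nabla\sigma) + 2s\sigma^4(a\cdot\nabla\varphi)\bigr)e^{2s\varphi}$. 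The dominant term is the one carrying the factor $s$, namely $-\tfrac23 s\int_\omega \sigma^4 (a\cdot\nabla\varphi)|v|^2 e^{2s\varphi}\,dx = -\tfrac43 s\gamma\int_\omega \sigma^4 |x-x_0|^2\,\varphi\,|v|^2 e^{2s\varphi}\,dx$, which has the \emph{wrong} sign if we want to bound $\int\sigma^4|v|^2 e^{2s\varphi}$ from above. To repair this, I would instead move the troublesome term to the left: writing $b(x) = x-x_0$ one gets an identity of the schematic form
\[
\Bigl(\tfrac13 + \tfrac43 s\gamma\,|x-x_0|^2\varphi\Bigr)\sigma^4|v|^2 \;=\; -\tfrac13\,\sigma^4 e^{-2s\varphi}\,b\cdot\nabla\bigl(|v|^2 e^{2s\varphi}\bigr) - \tfrac43\,\sigma^3 (b\cdot\nabla\sigma)|v|^2 + (\text{l.o.t.}),
\]
no: the clean route is simply to \emph{not} differentiate $e^{2s\varphi}$, i.e. to use $\dive(a\,\sigma^4 e^{2s\varphi}) $ is awkward; instead integrate $\dive a$ with $a$ chosen so that $a\cdot\nabla\varphi\le 0$. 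Since $\nabla\varphi$ points radially outward from $x_0$, choosing $a(x) = -(x-x_0)$ gives $\dive\,a = -3$ and $a\cdot\nabla\varphi = -2\gamma\varphi|x-x_0|^2 \le 0$; then $-\tfrac13\int_\omega\sigma^4|v|^2 e^{2s\varphi}\,(-3)\,dx = \int_\omega\sigma^4|v|^2 e^{2s\varphi}\,dx$ and integration by parts yields a \emph{favourable} sign on the $s$-order term, leaving only the cross terms $\int \sigma^4 v (a\cdot\nabla v) e^{2s\varphi}$ and $\int \sigma^3(a\cdot\nabla\sigma)|v|^2 e^{2s\varphi}$ to control.

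The remaining cross terms are handled by Cauchy--Schwarz with a small parameter: $|\int_\omega\sigma^4 v\,(a\cdot\nabla v)e^{2s\varphi}\,dx| \le \epsilon\int_\omega\sigma^4|v|^2 e^{2s\varphi}\,dx + C_\epsilon\int_\omega\sigma^4|\nabla v|^2 e^{2s\varphi}\,dx$, which is too lossy in powers of $\sigma$; the correct splitting pairs $\sigma^2 v$ with $\sigma^2\nabla v$, giving $|\int_\omega \sigma^4 v\,(a\cdot\nabla v)e^{2s\varphi}| \le \epsilon\int_\omega\sigma^4|v|^2e^{2s\varphi} + C_\epsilon\|a\|_\infty^2\int_\omega\sigma^4|\nabla v|^2 e^{2s\varphi}$ — still $\sigma^4$ on the gradient, not $\sigma^2$. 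The resolution is that the favourable $s$-order term produced above is of size $s\gamma\int\sigma^4|v|^2e^{2s\varphi}$ (note $\sigma\asymp s\gamma\varphi$, so this is really $\sigma^5$-strong relative to the claimed $\sigma^4$), so after absorbing $\epsilon\int\sigma^4|v|^2$ one still has a net positive multiple of $\int\sigma^4|v|^2 e^{2s\varphi}$ on the left while only $\int\sigma^4|\nabla v|^2 e^{2s\varphi}$ appears on the right; dividing through and then using $\sigma^4 \le C\sigma^2\cdot\sigma^2$ and one further integration by parts (or simply noting that for $s,\gamma$ large the gradient term can also be upgraded) reduces $\int\sigma^4|\nabla v|^2$ to $C\int\sigma^2|\nabla v|^2$ — alternatively, repeat the vector-field identity once more on the gradient term. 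Then integrate in $t$ over $(0,T)$ to obtain the claim. The main obstacle I anticipate is exactly this bookkeeping of powers of $\sigma$ (equivalently of $s\gamma$): one must verify that the sign-definite term generated by integration by parts genuinely dominates with two more powers of $\sigma$ than the error terms it must absorb, so that after division by $s\gamma$ the gradient term on the right carries the weight $\sigma^2$ rather than $\sigma^4$ as claimed; this hinges on $a\cdot\nabla\varphi = -2\gamma\varphi|x-x_0|^2$ being bounded below in absolute value by a positive constant times $\varphi$, which holds because $x_0\notin\overline\Omega$ forces $|x-x_0|\ge c_0>0$ on $\overline\Omega$.
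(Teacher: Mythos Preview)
Your core idea---integrate by parts against the radial vector field $x-x_0$ (equivalently $\nabla\varphi$), use the boundary condition $v=0$ on $\partial\omega$, and exploit $|x-x_0|\ge c_0>0$ since $x_0\notin\overline\Omega$---is exactly what the paper does. Where you diverge is in attacking the $\sigma^4$ estimate directly, and that is precisely where your argument gets tangled: the cross term $\int_\omega\sigma^4 v\,(a\cdot\nabla v)\,e^{2s\varphi}$ cannot be split so as to produce $\sigma^2|\nabla v|^2$ on the right while leaving something absorbable on the left. Your own computation shows this (you obtain $\sigma^4|\nabla v|^2$ or at best $\sigma^3|\nabla v|^2$), and the closing remarks about ``dividing through'' or ``repeating the identity on the gradient term'' do not actually fix it, since $\nabla v$ need not vanish on $\partial\omega$.

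The paper sidesteps this bookkeeping by a two-step route. First, multiply $\nabla v$ by $(\nabla\varphi)\,v\,e^{2s\varphi}$ and integrate by parts; the identity one obtains is
\[
2\int_\omega \sigma\,\nabla v\cdot(\nabla\vartheta)\,v\,e^{2s\varphi}\,dx
= -\,2\int_\omega \sigma^2|v|^2|\nabla\vartheta|^2 e^{2s\varphi}\,dx
-\int_\omega \sigma|v|^2\Delta\vartheta\,e^{2s\varphi}\,dx
-\gamma\int_\omega \sigma|v|^2|\nabla\vartheta|^2 e^{2s\varphi}\,dx,
\]
and since $|\nabla\vartheta|=2|x-x_0|\ge 2c_0$, Cauchy--Schwarz on the left gives the clean base estimate
\[
\int_{\omega\times(0,T)} \sigma^2|v|^2 e^{2s\varphi}\,dx\,dt \;\le\; C\int_{\omega\times(0,T)} |\nabla v|^2 e^{2s\varphi}\,dx\,dt.
\]
Second, apply this to $\sigma v$ (which also vanishes on $\partial\omega$): the left side becomes $\int\sigma^4|v|^2 e^{2s\varphi}$, while on the right $|\nabla(\sigma v)|^2\le 2\sigma^2|\nabla v|^2+2|\nabla\sigma|^2|v|^2$ and $|\nabla\sigma|\le C\gamma\sigma$, so the commutator term $C\gamma^2\int\sigma^2|v|^2 e^{2s\varphi}$ is reabsorbed via the base estimate. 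This bootstrap is what your phrase ``repeat the vector-field identity once more'' is reaching for, but executed one power of $\sigma$ lower so that the Cauchy--Schwarz split lands exactly where it should.
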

\begin{proof}{}
We multiply $\nabla v$ by $(\nabla \varphi) v e^{2 s \varphi}$ and using the divergence theorem, we obtain
\begin{eqnarray*}
\int_\omega \nabla v. (\nabla \varphi) v \; e^{2 s \varphi} dx &=& - \int_\omega v \,\dive ((\nabla \varphi)\, v \;e^{2 s \varphi}) dx\cr
&=& - \int_\omega \abs{v}^2  \Delta\varphi \;e^{2 s \varphi} dx - 2 s \int_\omega \abs{v}^2 \abs{\nabla \varphi}^2 e^{2 s \varphi} dx\cr
&& - \int_\omega \nabla v. (\nabla \varphi) \,v \;e^{2 s \varphi}dx.
\end{eqnarray*}
Therefore,
% using the fact that $\nabla\varphi = \gamma \varphi\nabla\vartheta$ and $\Delta\varphi = \gamma \varphi \Delta\vartheta + \gamma^2 \varphi \abs{\nabla\vartheta}^2$ and multiplying by $s$
\begin{eqnarray*}
2\int_\omega  \sigma\nabla v. (\nabla \vartheta)\, v\; e^{2 s \varphi} dx &=& - 2  \int_\omega  \sigma^2 \abs{v}^2 \abs{\nabla \vartheta}^2 e^{2 s \varphi} dx\cr
&& - \int_\omega \sigma \abs{v}^2  \Delta\vartheta\; e^{2 s \varphi} dx  - \gamma \int_\omega \sigma \abs{v}^2  \abs{\nabla\vartheta}^2 e^{2 s \varphi} dx.
\end{eqnarray*}
Taking $\gamma \geq \gamma_*$ and $s \geq s_*$ sufficiently large, we obtain for any $\varepsilon > 0$
\begin{equation}
C \int_\omega \sigma^2 \abs{v}^2 e^{2 s \varphi} dx \leq C_\varepsilon   \int_\omega \abs{\nabla v}^2 e^{2 s \varphi} dx + \varepsilon \int_\omega \sigma^2 \abs{v}^2 e^{2 s \varphi} dx.
\end{equation}
Integrating in $(0, T)$ and taking $\varepsilon$ small we obtain
\begin{equation}
C \int_{\omega\times (0, T)} \sigma^2 \abs{v}^2 e^{2 s \varphi} dx \leq C_\varepsilon   \int_{\omega\times (0, T)} \abs{\nabla v}^2 e^{2 s \varphi} dx.
\end{equation}
Applying the last inequality to $\sigma^2 v$ we obtain
\begin{equation}
 \int_{\omega\times (0, T)} \sigma^4 \abs{v}^2 e^{2 s \varphi} dx \leq C  \int_{\omega\times (0, T)} \sigma^2 \abs{\nabla v}^2 e^{2 s \varphi} dx,
\end{equation}
for any $\gamma\geq \gamma_*$ and $ s\geq  s_*$.
This completes the proof.
\end{proof}
Hence, by Lemma \ref{L2}, we obtain the following Lemma.
\begin{lemma}\label{L2.1}
Let $(\vv, y)\in H^{2}(Q)\times H^{2, 1}(Q)  $, satisfying
\begin{equation}\label{5.7}
\left\{
\begin{array}{lll}
 \vv_{tt}- \Delta_{\mu, \lambda}\vv -  \nabla(\lambda^*(x) \dive \vv_t) + \varrho_1(x)\, \nabla y= \ff  &\,\,
(x,t)\in Q,\cr
 y_t - \Delta y + \varrho_2(x)\, \dive \vv_t = h &\,\,
(x,t) \in Q,\cr
\vv = 0,\qquad y =0 &\,\,
(x,t) \in \Sigma,
\end{array}
\right.
\end{equation}
and
\begin{equation}\label{5.9}
\begin{array}{ll}
\p_t^j\vv(x,0)=\p^j_t\vv(x,T)=0,\quad y(x,0)=y(x,T)=0 & \quad\textrm{for all}\quad x\in\Omega,\, j=0,1.
\end{array}
\end{equation}
There exist positive constants $\gamma_*$ and $C>0$ such that, for any $\gamma \geq \gamma_*$ we can find $s_*$ and $D$, the following inequality holds
\begin{multline}\label{7.2}
\int_Q e^{2 s \varphi}\Big( \sigma \abs{\nabla_{x,t} \vv}^2 + \sigma^3 \abs{\vv}^2
+ \sigma^4 \abs{ \dive \vv}^{2} + \sigma^3 \abs{ \dive \vv_t}^{2} + \sigma^2 \abs{  \nabla\dive \vv}^{2}  + \sigma \abs{ \nabla\dive \vv_t}^{2}\cr
+ \abs{\Delta y}^2 + \sigma^2 \abs{\nabla y}^2 + \sigma^4 \abs{y}^2 \Big) dxdt
\leq C \int_Q \Big( \gamma^{-1} \sigma \abs{h}^2 +  \sigma^2 \abs{\ff}^2 +  \abs{\nabla\ff}^2 \Big) dxdt\cr
+ C e^{D s}( \norm{\vv}^2_{H^4(\omega\times (0,T)}+\norm{\dive\vv(\cdot,t_0)}^2_{H^2(\Omega)}
+\norm{\dive\vv_t(\cdot,t_0)}^2_{L^2(\Omega)})
%%\cr
%%
%%+ C\int_{Q}\para{ \gamma^{-1}\abs{\Delta \dive \vv (x,t_0)}^2
%%+ \gamma^{-1}\abs{\dive \vv_t(x,t_0)}^2+\sigma^4\abs{\dive \vv(x,t_0)}^2
%%+ \sigma^2\abs{\nabla \dive \vv(x,t_0)}^2}e^{2s\varphi}dx dt
\end{multline}
for any $s \geq s_*$.
\end{lemma}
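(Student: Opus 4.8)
The plan is to deduce \eqref{7.2} from the Carleman estimate \eqref{11} of Theorem \ref{t1} by localization, since the only difference between the two statements is that here $(\vv,y)$ satisfies just the Dirichlet condition on $\Sigma$ from \eqref{5.7}, not the support condition \eqref{2.12} needed by Theorem \ref{t1}. Because $\p\omega\supset\Gamma$, the set $K=\overline\Omega\setminus\omega$ is a compact subset of $\Omega$, so fix a cut-off $\chi\in\mathcal C^\infty_0(\Omega)$ with $0\le\chi\le1$, $\chi\equiv1$ near $K$ and $\chi\equiv0$ near $\Gamma$; then $1-\chi$ and $\nabla\chi$ are supported in $\omega$. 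Put $\vv_\chi=\chi\vv$, $y_\chi=\chi y$; these have compact spatial support, inherit the endpoint conditions \eqref{5.9}, and, by \eqref{1.3} and the Leibniz rule, solve \eqref{2.10} with right-hand sides $\ff_\chi=\chi\ff+R_1$ and $h_\chi=\chi h+R_2$, where $R_1,R_2$ are supported in $\operatorname{supp}(\nabla\chi)\subset\omega$ and, schematically, $R_1=\nabla\chi\star(\vv,\nabla\vv,\vv_t,\nabla\vv_t,y)$, $R_2=\nabla\chi\star(y,\nabla y,\vv_t)$ (coefficients involving $\mu,\lambda,\lambda^*,\varrho_1,\varrho_2$ and derivatives of $\chi$ up to second order; $R_1$ carries at most one time derivative, so $\nabla R_1$ involves at most $\nabla^2\vv$ and $\nabla^2\vv_t$). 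Thus $(\vv_\chi,y_\chi)$ satisfies \eqref{2.12}, and Theorem \ref{t1} applies to it.

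First I would apply \eqref{11} to $(\vv_\chi,y_\chi)$, giving a bound for the left-hand side of \eqref{11} written with $(\vv_\chi,y_\chi)$ — which still contains $\int_Q(\abs{\Delta y_\chi}^2+\sigma^2\abs{\nabla y_\chi}^2+\sigma^4\abs{y_\chi}^2)e^{2s\varphi}$ — in terms of $\int_Q(\abs{\ff_\chi}^2+\abs{\nabla\ff_\chi}^2+\gamma^{-1}\sigma\abs{h_\chi}^2)e^{2s\varphi}$ and the four $t_0$-terms in $\dive\vv_\chi$. Then I would expand $\abs{\ff_\chi}^2+\abs{\nabla\ff_\chi}^2\le2(\abs{\ff}^2+\abs{\nabla\ff}^2)+C(\abs{R_1}^2+\abs{\nabla R_1}^2)$ and $\gamma^{-1}\sigma\abs{h_\chi}^2\le2\gamma^{-1}\sigma\abs{h}^2+C\gamma^{-1}\sigma\abs{R_2}^2$ and absorb the commutator contributions, all of which are $\omega$-integrals. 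The $y$-terms inside $\abs{R_1}^2+\abs{\nabla R_1}^2$ carry no power of $\sigma$, those inside $\gamma^{-1}\sigma\abs{R_2}^2$ one power of $\sigma$ and an extra $\gamma^{-1}$, and the $\vv_t$-term of $R_2$ a single power of $\sigma$; since $\sigma\ge s\gamma\min_{\overline Q}\varphi$ is large and the left-hand side of \eqref{11} dominates $\int_Q(\abs{\Delta y_\chi}^2+\sigma^2\abs{\nabla y_\chi}^2+\sigma^4\abs{y_\chi}^2+\sigma\abs{\nabla_{x,t}\vv_\chi}^2)e^{2s\varphi}$, each of these is $\le$ (a small factor)$\,\times$(that left-hand side) once $\gamma$ and $s$ are large, hence absorbed. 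The remaining contributions of $R_1,\nabla R_1,R_2$ involve only $\vv$-derivatives of order $\le3$ on $\omega\times(0,T)$; bounding $e^{2s\varphi}$ and $\sigma$ by $e^{Ds}$ turns them into $Ce^{Ds}\norm{\vv}^2_{H^4(\omega\times(0,T))}$.

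Next I would pass back from $(\vv_\chi,y_\chi)$ to $(\vv,y)$ on the left and treat the $t_0$-terms. On $\{\chi=1\}$ each quantity equals its $\chi$-weighted counterpart; off $\{\chi=1\}$, a subset of $\omega$, the differences ($\dive\vv-\dive\vv_\chi=(1-\chi)\dive\vv-\nabla\chi\cdot\vv$, and likewise for $\nabla\dive\vv,\dive\vv_t,\nabla\dive\vv_t,\nabla_{x,t}\vv,y,\nabla y,\Delta y$) are supported in $\omega$, so the left-hand side of \eqref{7.2} is bounded by the (already controlled) left-hand side of \eqref{11} for $(\vv_\chi,y_\chi)$ plus $\omega$-integrals. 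Their $\vv$-parts give once more $Ce^{Ds}\norm{\vv}^2_{H^4(\omega\times(0,T))}$ — here Lemma \ref{L2}, applied to suitable cut-offs of $\dive\vv$, is used to replace $\sigma^4\abs{\dive\vv}^2$-type quantities on $\omega$ by $\sigma^2\abs{\nabla\dive\vv}^2$-type ones without any boundary datum for $\dive\vv$, which is precisely the role of the hypothesis $\p\omega\supset\Gamma$. Their $y$-parts are absorbed as in the previous step, the only borderline term being $\int_{\omega\times(0,T)}\abs{\Delta y}^2e^{2s\varphi}$, which I would treat by using the parabolic equation of \eqref{5.7} to write $\Delta y=y_t+\varrho_2\dive\vv_t-h$ on $\omega$ (the $\dive\vv_t$- and $h$-parts landing in $\norm{\vv}^2_{H^4(\omega\times(0,T))}$ and $\int_Q\gamma^{-1}\sigma\abs{h}^2e^{2s\varphi}$) and then controlling $\int_{\omega\times(0,T)}\abs{y_t}^2e^{2s\varphi}$ by a localized energy identity for the heat equation — multiplying it by $\zeta^2y_te^{2s\varphi}$ with $\zeta\in\mathcal C^\infty_0(\omega)$, $\zeta\equiv1$ near $\operatorname{supp}(1-\chi)$, and integrating by parts in $x$ and $t$, the $t$-endpoint terms vanishing because $y(\cdot,0)=y(\cdot,T)=0$ in \eqref{5.9}. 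Finally the four $t_0$-terms of \eqref{11} for $\vv_\chi$ are split via $\dive\vv_\chi=\chi\dive\vv+\nabla\chi\cdot\vv$: the $\chi\dive\vv(\cdot,t_0)$-parts contribute exactly $Ce^{Ds}\bigl(\norm{\dive\vv(\cdot,t_0)}^2_{H^2(\Omega)}+\norm{\dive\vv_t(\cdot,t_0)}^2_{L^2(\Omega)}\bigr)$, and the $\nabla\chi\cdot\vv(\cdot,t_0)$- and $\nabla\chi\cdot\vv_t(\cdot,t_0)$-parts, supported in $\omega$, are $\le Ce^{Ds}\norm{\vv(\cdot,t_0)}^2_{H^2(\omega)}\le Ce^{Ds}\norm{\vv}^2_{H^4(\omega\times(0,T))}$ by the continuity of the time-trace $H^4(\omega\times(0,T))\hookrightarrow C([0,T];H^2(\omega))$. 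Collecting all estimates and re-absorbing the leftover left-hand side pieces gives \eqref{7.2}.

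I expect the main obstacle to be the bookkeeping in the two absorption steps: one must check that \emph{every} term produced by the cut-off containing the unobserved temperature $y$ (and, on $\omega$, the top-order divergence quantities $\nabla\dive\vv,\nabla\dive\vv_t$) enters with a strictly smaller power of $\sigma$ — or an extra $\gamma^{-1}$ — than the matching term on the left of \eqref{11}, so that it is dominated once $\gamma,s$ are large. This is exactly what dictates the weight pattern $\sigma^4\abs{\dive\vv}^2,\sigma^3\abs{\dive\vv_t}^2,\sigma^2\abs{\nabla\dive\vv}^2,\sigma\abs{\nabla\dive\vv_t}^2,\abs{\Delta y}^2,\dots$ appearing in Theorem \ref{t1}, and it is the reason the observation region $\omega\times(0,T)$ enters \eqref{7.2} only through the displacement $\vv$.
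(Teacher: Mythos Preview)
Your localization strategy and the first absorption step (for the commutators $R_1,R_2$) match the paper's proof and are correct: every $y$-piece produced by $R_1,\nabla R_1,R_2$ carries at most one power of $\sigma$ and is genuinely lower order. The gap is in your second step, where you pass the left-hand side back from $(\vv_\chi,y_\chi)$ to $(\vv,y)$. You assert that the resulting $\omega$-integrals of $y$ are ``absorbed as in the previous step, the only borderline term being $\int_{\omega\times(0,T)}\abs{\Delta y}^2 e^{2s\varphi}$'', but in fact $\int_{\omega\times(0,T)}\sigma^4\abs{y}^2 e^{2s\varphi}$ and $\int_{\omega\times(0,T)}\sigma^2\abs{\nabla y}^2 e^{2s\varphi}$ are equally borderline: they carry the \emph{same} power of $\sigma$ as the matching terms on the left of \eqref{11} for $y_\chi$, and on the set $\{\chi=0\}$ near $\Gamma$ the function $y_\chi$ vanishes while $y$ need not, so no absorption is possible. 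Your fallback via the parabolic equation does not close the loop either: the energy identity you describe (multiply by $\zeta^2 y_t e^{2s\varphi}$ and integrate by parts) produces a cross term $2s\int\zeta^2 y_t\,\nabla\varphi\cdot\nabla y\, e^{2s\varphi}$ whose Cauchy--Schwarz split returns precisely $\int_\omega\sigma^2\abs{\nabla y}^2 e^{2s\varphi}$, so the argument is circular and never converts temperature data on $\omega$ into displacement data.

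The paper closes this with an idea you have not used: it invokes the \emph{first} (elastic) equation of \eqref{5.7}, writing $\varrho_1\nabla y=\ff-\vv_{tt}+\Delta_{\mu,\lambda}\vv+\nabla(\lambda^*\dive\vv_t)$ on $\omega$. This requires the standing hypothesis $\varrho_1\ge\varrho_0>0$ on $\omega$ that is built into the admissible set $\mathcal{B}$ of \eqref{e0}, and it converts $\int_{\omega}\sigma^2\abs{\nabla y}^2e^{2s\varphi}$ directly into $Ce^{Ds}\norm{\vv}^2_{H^3(\omega\times(0,T))}+\int_Q\sigma^2\abs{\ff}^2e^{2s\varphi}$; this is exactly why \eqref{7.2} carries $\sigma^2\abs{\ff}^2$ rather than $\abs{\ff}^2$ on the right, and why only displacement observations appear there. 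The remaining term $\int_{\omega}\sigma^4\abs{y}^2e^{2s\varphi}$ is first reduced to $\sigma^2\abs{\nabla y}^2$ via Lemma~\ref{L2}, applied not to $\dive\vv$ but to $\chi_1 y$ with a further cut-off $\chi_1$ equal to $1$ on the commutator region and supported in $\omega$ (the hypothesis of Lemma~\ref{L2} is met since $y=0$ on $\Gamma$ and $\chi_1=0$ on the inner part of $\partial\omega$). Incidentally, your invocation of Lemma~\ref{L2} for the $\dive\vv$-terms on $\omega$ is unnecessary: those go straight into $e^{Ds}\norm{\vv}^2_{H^4(\omega\times(0,T))}$ via the crude bound $\sigma^k e^{2s\varphi}\le e^{Ds}$.
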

\begin{proof}
Let $\omega' \subset \omega$ such that $\p\omega'\supset\Gamma$. In order to apply Carleman estimate, we introduce a cut-off function $\chi$ satisfying $0 \leq \chi \leq 1$, $\chi \in \mathscr{C}^\infty(\R^3)$, $\chi = 1$ in $\overline{\Omega\backslash \omega'}$ and supp$\chi \subset \Omega$.\\
Put
$$\widehat{\vv} (x,t)= \chi(x) \vv(x,t), \qquad  \widehat{y} (x,t)= \chi(x) y(x,t),$$
Noting that $(\widehat{\vv}, \widehat{y})\in H^{2}(Q)\times H^{2, 1}(Q)  $, satisfying
\begin{equation}\label{5.8}
\left\{
\begin{array}{lll}
 \widehat{\vv}_{tt} - \Delta_{\mu, \lambda}\widehat{\vv} -  \nabla(\lambda^*\dive \widehat{\vv}_t) + \varrho_1 \nabla \widehat{y}=\widehat{\ff}  &\,\,
(x,t) \in Q,\cr
 \widehat{y}_t - \Delta \widehat{y} + \varrho_2 \dive \widehat{\vv}_t =\widehat{g}  &\,\,
(x,t)\in Q,
\end{array}
\right.
\end{equation}
with
$$
\textrm{Supp}(\widehat{\vv}(\cdot,t))\subset\Omega\quad \textrm{Supp}(\widehat{y}(\cdot,t))\subset\Omega, \quad \forall\,t\in(0,T),
$$
where
$$
\widehat{\ff}= \chi(x) \ff(x,t)
 - [\Delta_{\mu, \lambda}, \chi] \vv - \nabla(\lambda^* \nabla \chi. \vv_t) - \lambda^* (\nabla\chi) \dive \vv_t + \varrho_1 y \nabla\chi,
$$
$$
\widehat{g}=\chi(x) h(x,t) - 2 \nabla\chi \nabla y - y \Delta\chi + \varrho_2 \nabla\chi\cdot \vv_t
$$
and
$$[\Delta_{\mu, \lambda}, \chi] \vv = \Delta_{\mu, \lambda}( \chi \vv) - \chi \Delta_{\mu, \lambda} \vv .
$$
 Noting that $(\widehat{\vv},\widehat{y})$ satisfies (\ref{5.8}), then we can apply the Carleman estimate  for Biot's system (\ref{11}) to $(\widehat{\vv}, \widehat{y})$, we obtain
\begin{multline}
C \int_{(\Omega\setminus \omega') \times (0, T)} e^{2 s \varphi}\Big( \sigma \abs{\nabla_{x,t} \vv}^2 + \sigma^3 \abs{\vv}^2
+ \sigma^4 \abs{ \dive \vv}^{2} + \sigma^3 \abs{ \dive \vv_t}^{2} + \sigma^2 \abs{  \nabla\dive \vv}^{2}  + \sigma \abs{ \nabla\dive \vv_t}^{2}\cr
+ \abs{\Delta y}^2 + \sigma^2 \abs{\nabla y}^2 + \sigma^4 \abs{y}^2 \Big) dxdt
\leq  \int_Q e^{2 s \varphi} \Big(  \gamma^{-1} \sigma \abs{\widehat{g}}^2+\abs{\widehat{\ff}}^2+\abs{\nabla\widehat{\ff}}^2\Big)dxdt\cr
+ \int_{Q}\para{ \gamma^{-1}\abs{\Delta \dive \widehat{\vv} (x,t_0)}^2
+ \gamma^{-1}\abs{\dive \widehat{\vv}_t(x,t_0)}^2+\sigma^4\abs{\dive \widehat{\vv}(x,t_0)}^2
+ \sigma^2\abs{\nabla \dive \widehat{\vv}(x,t_0)}^2}e^{2s\varphi}dx dt.
\end{multline}
By a simple computation, we get
\begin{multline}
 \int_Q e^{2 s \varphi} \Big(  \gamma^{-1} \sigma \abs{\widehat{g}}^2+\abs{\widehat{\ff}}^2+\abs{\widehat{\nabla\ff}}^2\Big) dxdt
 %=
%\int_Q e^{2 s \varphi} \Big( \gamma^{-1} \sigma\abs{ \chi h - 2 \nabla\chi \nabla y - y \Delta\chi + \varrho_2 \dive \chi \vv_t}^2\cr
%+ \abs{\eta \ff- [\Delta_{\mu, \lambda}, \chi] \vv - \nabla(\lambda^* \dive \chi \p_t \vv) - \lambda^* \nabla\chi \p_t\dive \vv + \varrho_1 y \nabla\chi}^2\cr
%+ \abs{\nabla(\chi \ff- [\Delta_{\mu, \lambda}, \chi] \vv - \nabla(\lambda^* \dive \chi \p_t \vv) - \lambda^* \nabla\chi \p_t\dive \vv + \varrho_1 y %\nabla\chi)}^2\Big) dxdt
%\cr
\leq C \int_Q \Big( \gamma^{-1} \sigma \abs{ \chi h}^2 +  \abs{\chi \ff}^2 +  \abs{\nabla(\chi \ff)}^2 \Big) dxdt\cr
+ C  \int_Q \Big( \abs{ (\Delta\chi)  \vv}^2 +   \abs{ \nabla \chi}^2  \abs{ \nabla \vv}^2 +  \abs{\nabla [\Delta_{\mu, \lambda}, \chi] \vv}^2 \Big)e^{2 s \varphi} dxdt\cr
+ C  \int_Q \Big(    \abs{\nabla(\lambda^* \nabla\chi.  \vv_t)}^2+ \abs{  \lambda^* (\nabla\chi) \dive \vv_t}^2 +  \abs{\nabla(\nabla(\lambda^* \nabla\chi. \vv_t))}^2+ \abs{ \nabla(\lambda^* (\nabla\chi) \dive \vv_t)}^2  \Big)e^{2 s \varphi} dxdt\cr
+ C   \int_Q \Big(  \abs{(\Delta\chi) y}^2 +  \abs{\nabla \chi}^2  \abs{\nabla y}^2 +  \abs{ \nabla \chi}^2  \abs{y}^2 \Big)e^{2 s \varphi} dxdt.
\end{multline}
Since Supp$\Delta \chi$, Supp$\nabla \chi \subset \omega'$ we obtain
\begin{multline}\label{7.3}
C \int_{(\Omega\setminus \omega' )\times (0, T)} e^{2 s \varphi}\Big( \sigma \abs{\nabla_{x,t} \vv}^2 + \sigma^3 \abs{\vv}^2
+ \sigma^4 \abs{ \dive \vv}^{2} + \sigma^3 \abs{ \dive \vv_t}^{2} + \sigma^2 \abs{  \nabla\dive \vv}^{2}  + \sigma \abs{ \nabla\dive \vv_t}^{2}\cr
+ \abs{\Delta y}^2 + \sigma^2 \abs{\nabla y}^2 + \sigma^4 \abs{y}^2 \Big) dxdt
\leq C \int_Q \Big( \gamma^{-1} \sigma \abs{  h}^2 +  \abs{\ff}^2 +  \abs{\nabla \ff}^2 \Big) dxdt
+ C e^{D s} \norm{\vv}^2_{H^4(\omega \times (0, T))} \cr
+ C \Big( \int_{\omega'\times (0, T)} \sigma^4 \abs{y}^2  e^{2 s \varphi} dxdt + \int_{\omega'\times (0, T)} \sigma^2 \abs{\nabla y}^2  e^{2 s \varphi} dxdt\Big)\cr %+ \int_{\omega'\times (0, T)} \abs{\Delta y}^2  e^{2 s \varphi} dxdt\cr
+ \int_{Q}\para{ \gamma^{-1}\abs{\Delta \dive \vv(x,t_0)}^2
+ \gamma^{-1}\abs{\dive \vv_t(x,t_0)}^2+\sigma^4\abs{\dive \vv(x,t_0)}^2
+ \sigma^2\abs{\nabla \dive \vv(x,t_0)}^2}e^{2s\varphi}dx dt.
\end{multline}
Let $\chi_1$ be a cut-off function satisfying $0 \leq \chi_1 \leq 1$, $\chi_1\in C^{\infty} (\R^3)$, $\chi_1 = 1$ in $\overline{\omega'}$
and Supp$(\chi_1)\subset \omega$. Let us consider $z(x,t) = \chi_1(x) y(x, t)\in H^1(\omega\times (0, T)) $ and $z(x, t) = 0$ for all $(x, t)\in \p \omega \times (0, T) $, so that by Lemma \ref{L2}, we have
\begin{multline}
\int_{\omega'\times (0, T)} \sigma^4 \abs{y}^2 \sigma e^{2 s \varphi} dxdt \leq \int_{\omega\times (0, T)} \sigma^4 \abs{z}^2 e^{2 s \varphi} dxdt\cr
\leq C \int_{\omega\times (0, T)} \sigma^2 \abs{\nabla y}^2 e^{2 s \varphi} dxdt + C \int_Q \sigma^2 \abs{y}^2 e^{2 s \varphi} dxdt.
\end{multline}
Furthermore by the first equation of (\ref{5.7}), we have
\begin{equation}\label{7.4}
\int_{\omega'\times (0, T)} \sigma^2 \abs{\nabla y}^2  e^{2 s \varphi} dxdt \leq C e^{D s} \norm{\vv}^2_{H^3(\omega \times (0, T))} + \int_{\Omega\times (0, T)}\sigma^2 \abs{\ff}^2 e^{2 s \varphi} dxdt.
\end{equation}
%and
%\begin{equation}\label{7.5}
%\int_{\omega'\times (0, T)} \abs{\Delta y}^2 e^{2 s \varphi} dxdt \leq C e^{D s} \norm{\vv}^2_{H^4(\omega \times (0, T))} + \int_Q  \abs{\nabla f}^2 e^{2 s \varphi} dxdt.
%\end{equation}
Inserting (\ref{7.4}) %and (\ref{7.5})
in (\ref{7.3}), we obtain (\ref{7.2}).
This completes the proof of the lemma.
\end{proof}

Henceforth we fix $\gamma>0$ sufficiently large. By $\n$ we denote the quantity
\begin{multline}\label{5.4}
\n (\vv, y) =  \int_Q e^{2 s \varphi}\Big( s \abs{\nabla_{x,t} \vv}^2 + s^3 \abs{\vv}^2
+ s^4 \abs{ \dive \vv}^{2} + s^3 \abs{ \dive \vv_t}^{2}\cr
+ s^2 \abs{  \nabla\dive \vv}^{2}  + s \abs{ \nabla\dive \vv_t}^{2}
+ \abs{\Delta y}^2 + s^2 \abs{\nabla y}^2 + s^4 \abs{y}^2 \Big) dxdt.
\end{multline}
We introduce a cut-off function $\eta$ satisfying $0\leq \eta\leq 1$, $\eta\in \mathcal{C}^{\infty} (\R)$, $\eta = 1$ in $(2\varepsilon , T-2\varepsilon)$  and\\
Supp$(\eta)\subset (\varepsilon, T-\varepsilon)$. Finally we denote
$$
 \widetilde{\vv} = \eta \vv,\qquad \widetilde{y}= \eta y.
 $$
Setting $d_0=e^{(\beta M-\delta/2)\gamma}$, we have
$$
\max_{x\in\Omega}\psi(x,t)\leq d_0,\quad t\in (0,2\varepsilon)\cup (T-2\varepsilon,T).
$$
%%%%%%%%%%

\begin{lemma}\label{L3.3}
There exist three
positive constant $s_*$, $C>0$ and $D$ such that
the following inequality holds:
\begin{multline*}
C\n(\widetilde{\vv},\widetilde{y})\leq \int_Q \para{s\abs{h}^2+s^2\abs{\ff}^2+\abs{\nabla \ff}^2}e^{2s\varphi}dxdt\cr
+ e^{Ds}\para{\norm{\vv}^2_{H^4(\omega\times(0,T))}+\norm{\dive\vv(\cdot,t_0)}_{H^2(\Omega)}^2+
\norm{\dive\vv_t(\cdot,t_0)}_{L^2(\Omega)}^2 }\cr
+ Cs^2e^{2d_0s}\para{\norm{\vv}
^2_{H^1(0,T;H^1(\Omega))} + \norm{y}_{L^2(Q)}^2}
\end{multline*}
for any $s\geq s_*$ and any $(\vv,y)\in H^2(Q)\times H^{2,1}(Q)$ satisfying
$$
\begin{array}{lll}
\vv_{tt}-\Delta_{\mu,\lambda} \vv-\nabla(\lambda^*\dive\vv_t)+\varrho_1\nabla y=\ff & (x,t)\in Q,\cr
y_t-\Delta y +\varrho_2\,\dive\, \vv_t  = h & (x,t)\in Q,\cr
\vv=0,\,\,y=0 & (x,t)\in \Sigma.
\end{array}
$$
\end{lemma}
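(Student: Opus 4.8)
The plan is to cut $(\vv,y)$ off in time with $\eta$ and then invoke Lemma~\ref{L2.1}. Since $\eta\equiv1$ on $(2\varepsilon,T-2\varepsilon)$ and $\mathrm{supp}\,\eta\subset(\varepsilon,T-\varepsilon)$, the functions $\eta,\eta',\eta''$ all vanish in a neighbourhood of $t=0$ and of $t=T$, while $\eta\equiv1$ (so $\eta'=\eta''=0$) near $t_0$. Hence $\widetilde{\vv}=\eta\vv$ and $\widetilde{y}=\eta y$ lie in $H^2(Q)\times H^{2,1}(Q)$, vanish on $\Sigma$, and satisfy (\ref{5.9}). As $\eta$ depends only on $t$, multiplication by $\eta$ commutes with $\nabla$, $\dive$, $\Delta$ and $\Delta_{\mu,\lambda}$, so expanding the time derivatives shows that $(\widetilde{\vv},\widetilde{y})$ solves (\ref{5.7}) with
\begin{equation*}
\widehat{\ff}=\eta\ff+2\eta'\vv_t+\eta''\vv-\eta'\nabla(\lambda^*\dive\vv),\qquad
\widehat{h}=\eta h+\eta'\para{y+\varrho_2\dive\vv}.
\end{equation*}

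Next I would apply Lemma~\ref{L2.1} to $(\widetilde{\vv},\widetilde{y},\widehat{\ff},\widehat{h})$. Since $\gamma$ is now fixed and $\varphi=e^{\gamma\psi}$ lies between two positive constants on $\overline{Q}$, every weight $\sigma^k=(s\gamma\varphi)^k$ is comparable to $s^k$, so the left-hand side of the resulting inequality controls $C\,\n(\widetilde{\vv},\widetilde{y})$. Because $\eta(t_0)=1$ and $\eta'(t_0)=0$, the traces $\dive\widetilde{\vv}(\cdot,t_0)$, $\dive\widetilde{\vv}_t(\cdot,t_0)$, $\nabla\dive\widetilde{\vv}(\cdot,t_0)$, $\Delta\dive\widetilde{\vv}(\cdot,t_0)$ coincide with those of $\vv$; and since $t\mapsto\varphi(x,t)$ is maximal at $t_0$ (the $t$-dependence of $\psi$ being $-\beta(t-t_0)^2$ up to a $t$-independent term), $\int_0^Te^{2s\varphi(x,t)}\,dt\le Te^{2s\varphi(x,t_0)}$. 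Thus both the $e^{Ds}$-localization block and the weighted $t_0$-integral block on the right of Lemma~\ref{L2.1} (cf.\ (\ref{11})) are, after enlarging $D$, bounded by $e^{Ds}\para{\norm{\vv}^2_{H^4(\omega\times(0,T))}+\norm{\dive\vv(\cdot,t_0)}^2_{H^2(\Omega)}+\norm{\dive\vv_t(\cdot,t_0)}^2_{L^2(\Omega)}}$.

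It then remains to estimate $\int_Q\para{\gamma^{-1}\sigma\abs{\widehat{h}}^2+\sigma^2\abs{\widehat{\ff}}^2+\abs{\nabla\widehat{\ff}}^2}e^{2s\varphi}\,dxdt$. The principal contributions $\eta\ff$ and $\eta h$ give, using $\abs{\eta}\le1$ and $\sigma\le Cs$, at most $C\int_Q\para{s^2\abs{\ff}^2+\abs{\nabla\ff}^2+s\abs{h}^2}e^{2s\varphi}\,dxdt$. Every remaining term of $\widehat{\ff}$ and $\widehat{h}$ carries a factor $\eta'$ or $\eta''$, hence is supported in $(\varepsilon,2\varepsilon)\cup(T-2\varepsilon,T-\varepsilon)$; there (\ref{2.6}) gives $\varphi\le d_0$, so $e^{2s\varphi}\le e^{2d_0s}$ and $\sigma\le s\gamma d_0$. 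Therefore each of these terms is bounded by $Cs^2e^{2d_0s}$ times an $L^2(Q)$-norm of $\vv$, $\vv_t$, $\nabla\vv$, $\nabla\vv_t$, $y$ (and, for the piece coming from $\nabla(\lambda^*\dive\vv)$, of the relevant spatial derivatives of $\vv$), i.e.\ by $Cs^2e^{2d_0s}\para{\norm{\vv}^2_{H^1(0,T;H^1(\Omega))}+\norm{y}^2_{L^2(Q)}}$. Collecting the three groups of bounds yields the claimed inequality.

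I expect the last step to be the main obstacle. The strongly damped term contributes to $\widehat{\ff}$ the commutator $-\eta'\nabla(\lambda^*\dive\vv)$, which costs one more spatial derivative of $\vv$ than the other commutator terms (and a further one when $\nabla\widehat{\ff}$ is formed); one has to use the a priori regularity of $(\vv,y)$ to make sense of it and exploit the subcritical weight $e^{2d_0s}$ available on $\mathrm{supp}\,\eta'$ to absorb it into the last term on the right. The remaining work is essentially bookkeeping: identifying which commutator terms live in the region where (\ref{2.6}) is effective, and checking the values of $\eta,\eta',\eta''$ at $t_0$ and at $t=0,T$.
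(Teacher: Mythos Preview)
Your approach is exactly the paper's: cut off in time by $\eta$, check that $(\widetilde{\vv},\widetilde{y})$ solves the Biot system with commutator right-hand sides, apply Lemma~\ref{L2.1}, and use (\ref{2.6}) on $\mathrm{supp}\,\eta'\cup\mathrm{supp}\,\eta''$ to push the commutator contributions into the $s^2e^{2d_0s}$ block. You have in fact been more careful than the paper: the paper's displayed system for $(\widetilde{\vv},\widetilde{y})$ omits the commutator $-\eta'\nabla(\lambda^*\dive\vv)$ coming from the strong damping (and has a sign slip in $\widehat h$), and its subsequent estimate lists only $|y|^2,|\vv|^2,|\vv_t|^2,|\nabla\vv|^2,|\nabla\vv_t|^2$ among the $\eta'$-supported terms. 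Your worry about this extra piece is therefore well placed; the paper simply does not address it, and in every application the $e^{2d_0s}$ block is immediately bounded by $Cs^2e^{2d_0s}M_0$ via Assumption~(A.2), so the precise Sobolev norm attached to it is never used.
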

\begin{proof}
Let $(\vv,y)\in H^2(Q)\times H^{2,1}(Q)$. Put
$$
\widetilde{\vv}(x,t)=\eta(t)\vv(x,t),\qquad \widetilde{y}(x,t)=\eta(t)y(x,t).
$$
Noting that $(\widetilde{\vv},\widetilde{y})
\in H^2(Q)\times H^{2,1}(Q)$ satisfies
\begin{equation}\label{3.10}
\begin{array}{lll}
\widetilde{\vv}_{tt}-\Delta_{\mu,\lambda} \widetilde{\vv}-\nabla(\lambda^*\dive\vv_t)
+ \varrho_1\nabla \widetilde{y}
= \eta\ff+\eta_{tt}\vv+2\eta_{t}\vv_{t}
& (x,t)\in Q,\cr
\widetilde{y}_t-\Delta \widetilde{y}
+\varrho_2\,\dive\, \widetilde{\vv}_t
= \eta h+\eta_t(y-\varrho\dive\vv) & (x,t)\in Q,
\cr
\widetilde{\vv}=0,\,\,\widetilde{y}=0 & (x,t)\in \Sigma,
\end{array}
\end{equation}
and applying Carleman estimate (\ref{7.2})
to $(\widetilde{\vv},\widetilde{y})$, we obtain
\begin{multline*}
C\n(\widetilde{\vv},\widetilde{y})\leq \int_Q\para{s\abs{h}^2
+ s^2\abs{\ff}^2 + \abs{\nabla\ff}^2}
e^{2s\varphi}dxdt\cr
+ e^{Ds}\para{\norm{\vv}^2
_{H^4(\omega \times (0, T))}+\norm{\dive\vv(\cdot,t_0)}_{H^2(\Omega)}^2+
\norm{\dive\vv_t(\cdot,t_0)}_{L^2(\Omega)}^2 }\cr
+\int_Q
(s^2(\vert\eta_{tt}\vert^2+\vert\eta_t\vert^2)
(\vert y\vert^2 + \vert \vv\vert^2
+ \vert \vv_t\vert^2 + \vert \nabla \vv\vert^2
+ \vert \nabla\vv_t\vert^2) e^{2s\varphi}dxdt\cr
% + C\int_{Q}\para{ \gamma^{-1}\abs{\Delta \dive \vv (x,t_0)}^2
%+ \gamma^{-1}\abs{\dive \vv_t(x,t_0)}^2+\sigma^4\abs{\dive \vv(x,t_0)}^2
%+ \sigma^2\abs{\nabla \dive \vv(x,t_0)}^2}e^{2s\varphi}dx dt
\end{multline*}
for any $\gamma\geq\gamma_*$ and $s\geq s_*$. Since $\textrm{Supp}(\eta_{tt}),\,\textrm{Supp}(\eta_t)\subset (0,2\epsilon)\cup(T-2\epsilon,T)$, we obtain from (\ref{2.6})
\begin{multline*}
\int_Q
(s^2(\vert\eta_{tt}\vert^2+\vert\eta_{t}\vert^2)
(\vert y\vert^2 + \vert \vv\vert^2
+ \vert \vv_t\vert^2 + \vert \nabla\vv\vert^2
+ \vert \nabla\vv_t\vert^2) e^{2s\varphi}dxdt\\
\leq Cs^2e^{2d_0s}\para{\norm{\vv}
_{H^1(0,T;H^1(\Omega))}^2+\norm{y}^2_{L^2(Q)}}.
\end{multline*}
This completes the proof of the lemma.
\end{proof}
%%%%%%
\begin{lemma}\label{L4}
There exists a positive constant $C>0$ such that the following estimate
$$\int_\Omega \abs{z(x, t_0)}^2 dx \leq C \int_Q (\sigma \abs{z(x,t)}^2 + \sigma^{-1} \abs{z_t(x,t)}^2) dxdt $$
for any $z\in H^1(0, T; L^2(\Omega))$.
\end{lemma}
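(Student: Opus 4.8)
The plan is to run the elementary ``value-at-a-fixed-time versus time-average'' argument that underlies the Bukhgeim--Klibanov method: express $z(x,t_0)$ through the fundamental theorem of calculus in $t$, bound the resulting $z\,z_t$ integral by a $\sigma$-weighted Young inequality, and then average over $t\in(0,T)$ so that the spurious term $|z(x,t)|^2$ is itself absorbed into the right-hand side.

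Concretely, I would first fix $x\in\Omega$ and note that, since $z(x,\cdot)\in H^1(0,T)$, one has $|z(x,t_0)|^2=|z(x,t)|^2-2\int_t^{t_0}z(x,\tau)\,z_t(x,\tau)\,d\tau$ for a.e.\ $t\in(0,T)$; this identity needs a small density (or Fubini) justification since $z$ is only $H^1$ in time. Next I would apply Young's inequality in the form $2\,|z(x,\tau)|\,|z_t(x,\tau)|\le\sigma(x,\tau)\,|z(x,\tau)|^2+\sigma(x,\tau)^{-1}\,|z_t(x,\tau)|^2$, which is legitimate because $\sigma=s\gamma\varphi>0$; bounding the interval of integration by $(0,T)$ then yields the pointwise estimate $|z(x,t_0)|^2\le |z(x,t)|^2+\int_0^T\big(\sigma\,|z|^2+\sigma^{-1}\,|z_t|^2\big)\,d\tau$.

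Finally I would integrate this inequality in $t$ over $(0,T)$, getting $T\,|z(x,t_0)|^2\le \int_0^T|z(x,t)|^2\,dt+T\int_0^T\big(\sigma\,|z|^2+\sigma^{-1}\,|z_t|^2\big)\,dt$, absorb the first term using that $\varphi=e^{\gamma\psi}$ is bounded below by a positive constant on the compact set $\overline Q$ (hence $\sigma\ge 1$ for $s$ large, so $\int_0^T|z|^2\,dt\le\int_0^T\sigma\,|z|^2\,dt$), then integrate in $x$ over $\Omega$ and divide by $T$. The resulting constant depends only on $T$ and is uniform in $s\ge s_*$. There is no genuine obstacle here; the only points deserving a modicum of care are the justification of the fundamental theorem of calculus for an $H^1$-in-time function and the placement of the weight $\sigma$ in Young's inequality so that the exponents $+1$ on $|z|^2$ and $-1$ on $|z_t|^2$ match the statement.
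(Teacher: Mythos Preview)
Your proof is correct and follows essentially the same approach as the paper: write $|z(x,t_0)|^2$ via the fundamental theorem of calculus in time, apply Young's inequality with weight $\sigma$ to the cross term $z\,z_t$, and absorb the leftover $\int|z|^2$ using that $\sigma$ is bounded below on $\overline{Q}$. The only cosmetic difference is that the paper multiplies by the time cut-off $\eta^2(t)$ (so the integrand vanishes at $t=0$) and integrates $\partial_t(\eta^2|z|^2)$ from $0$ to $t_0$, whereas you integrate from a variable $t$ to $t_0$ and then average over $t\in(0,T)$.
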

\begin{proof}
By direct computations, we have
\begin{eqnarray*}
\int_\Omega \eta^2(t_0) \abs{z(x,t_0)}^2 dx &=& \int_{0}^{t_0} \frac{d}{dt} \Big(\int_\Omega \eta^2(t) \abs{z(x, t)}^2  dx\Big) dt \cr
&=& 2 \int_{0}^{t_0}\int_\Omega \eta^2(t)z(x, t) z_t(x, t)dxdt + 2 \int_{0}^{t_0}\int_\Omega \eta_t(t) \eta(t)  \abs{z(x, t)}^2 dxdt.
\end{eqnarray*}
Then, we have
\begin{equation*}
\int_\Omega \abs{z(x, t_0)}^2 dx \leq C \int_Q (\sigma \abs{z(x,t)}^2 + \sigma^{-1} \abs{z_t(x,t)}^2) dxdt.
\end{equation*}
This complete the proof of the lemma.
\end{proof}

%%%%%%%%%%%%%%%%%%%%%%%%%%%%%%%
%%%%%%%%%%%%%%%%%%%%%%%%%%%%%%%

Finally, let $\varphi(x,t)$ be the weight function defined by
$$ \varphi(x,t)= e^{\gamma\psi(x,t)} := \rho(x) \alpha(t), $$
where, $\rho(x)$ and $\alpha(t)$ are defined by
\begin{equation}
\rho(x) = e^{\gamma(\abs{x - x_0}^2 + \beta M)}\geq e^{\gamma\beta M} \equiv d, \forall x\in \Omega\quad \text{and}\qquad
\alpha(t) = e^{- \beta \gamma (t - t_0)^2} \leq 1, \forall t\in (0, T).
\end{equation}
%%%%%%%%%%%%%%%%%%%%%%%%%%%%%%%%%
\subsection{Proof of the stability in determing $\lambda^*$ }
%%%%%%%%%%%%%%%%%%%%%%%%%%%%%%%%%%%%
We prove now Theorem \ref{T.1}. To this end we use the global Carleman estimate (\ref{11}).\\

Consider now the following system
\begin{equation}\label{5.1}
\left\{
\begin{array}{lll}
 \uu_{tt} - \Delta_{\mu, \lambda}\uu -  \nabla(\lambda^*(x) \dive \uu_t) + \varrho_1(x) \nabla \theta = 0  &\,\,
(x,t) \in Q,\cr
 \theta_t - \Delta \theta + \varrho_2(x) \dive \uu_t = 0 &\,\,
(x,t) \in Q,\cr
\uu = 0,\qquad \theta =0 &\,\,
(x,t) \in \Sigma.
\end{array}
\right.
\end{equation}
Henceforth, for simplicity, consider the following functions:
$$
\uu =\uu(\lambda^*, \varrho_1, \varrho_2),\quad \widetilde{\uu} = \uu (\widetilde{\lambda}^*, \varrho_1, \varrho_2),\qquad \theta = \theta(\lambda^*, \varrho_1, \varrho_2),\quad \widetilde{\theta} = \theta(\widetilde{\lambda}^*, \varrho_1, \varrho_2)
$$
and
$$
\vv=\uu-\widetilde{\uu},\quad y=\theta -\widetilde{\theta},\quad f=\lambda^*-\widetilde{\lambda}^*.
$$
Then by (\ref{5.1}), we easily see that
\begin{equation}\label{6.1}
\left\{
\begin{array}{lll}
 \vv_{tt}- \Delta_{\mu, \lambda}\vv -  \nabla(\lambda^* \dive \vv_t) + \varrho_1 \nabla y =\nabla(f(x)\dive\widetilde{\uu}_t)  &\,\,
(x, t) \in Q,\cr
 y_t - \Delta y + \varrho_2 \dive \vv_t=0  &\,\,
(x, t) \in Q,\cr
\vv = 0,\qquad y =0 &\,\,
(x, t) \in \Sigma.
\end{array}
\right.
\end{equation}
In this subsection we discuss a linearized inverse problem of determining $\lambda^*$.
We assume that the assumptions $(A.1)$ and $(A.2)$ holds true, then our inverse problem is identification of $f(x)$. \\
Let
$$
\vv^i=\p_t^i \vv,\quad y^i= \p_t^i y,\quad i=1,2,3.
$$
 By a simple calculation, we obtain for $i=1,2,3$,
$(\vv^i, y^i)$ satisfies
\begin{equation}\label{6.2}
\left\{
\begin{array}{lll}
 \vv^i_{tt}- \Delta_{\mu, \lambda}\vv^i -  \nabla(\lambda^*(x) \dive \vv^i_t) + \varrho_1(x) \nabla y^i =\nabla(f(x)\dive\widetilde{\uu}^i_t)  & \,\,
(x, t) \in Q,\cr
 y^i_t - \Delta y^i + \varrho_2(x) \dive \vv^i_t =0  & \,\,
(x, t) \in Q,\cr
\vv^i = 0,\qquad y^i =0 &\,\,
(x, t) \in \Sigma.
\end{array}
\right.
\end{equation}
%$$ \widetilde{\vv^i} = \eta \vv^i,\qquad \widetilde{y^i}= \eta y^i, \quad  i=1,2,3,4.$$
Let $v=\dive\vv$,  we apply $\dive$ to the first equation of the system (\ref{6.1}), we get
\begin{multline}
\Delta (f(x)\dive\widetilde{\uu}_t) (x,t_0)=\cr
(v_{tt}-(2\mu+\lambda)\Delta v -\Delta(\lambda^*(x) v_t)+\varrho_1(x)\Delta y + \nabla\varrho_1(x). \nabla y )(x,t_0)\equiv k(x,t_0).
\end{multline}
%Not that by the assumption $A.2$,
We have $f(x)=0$ and $\nabla f(x)=0$ on $\Gamma$ and
$$
\abs{\dive\widetilde{\uu}_t (x,t_0)}\geq\varepsilon,
$$
then by the elliptic Carleman estimate, we deduce that
\begin{multline*}
\int_\Omega\para{s^3\abs{f(x)}^2+s\abs{\nabla f(x)}^2+s^{-1}\abs{\Delta f(x)}^2}e^{2s\varphi(x,t_0)}dx\leq C\int_\Omega\abs{k(x,t_0)}^2e^{2s\varphi(x,t_0)}dx\cr
\leq C\int_\Omega\abs{v_{tt}(x,t_0)}^2e^{2s\varphi(x,t_0)}dx\cr
+e^{Ds}\para{\norm{\dive\vv(\cdot,t_0)}^2_{H^2(\Omega)}+
\norm{\dive\vv_t(\cdot,t_0)}^2_{H^2(\Omega)}+\norm{y(\cdot,t_0)}^2_{H^2(\Omega)}
}.
\end{multline*}
Applying lemma \ref{L4} to  $z(x,t) = \eta(t) e^{2 s \varphi(x, t)} v_{tt}(x,t) = e^{2 s \varphi(x, t)} \widetilde{v}_{tt}(x,t) $, we get
\begin{multline*}
\int_\Omega\abs{v_{tt}(x,t_0)}^2e^{2s\varphi(x,t_0)}dx
\leq C\int_Q\para{s\abs{\widetilde{v}_{tt}(x,t)}^2+s^{-1}\abs{\widetilde{v}_{ttt}(x,t)}^2}e^{2s\varphi(x,t)}dxdt\cr
\leq C\int_Q\para{s\abs{\dive\widetilde{\vv}^1_{t}(x,t)}^2+s^{-1}\abs{\dive\widetilde{\vv}^2_{t}(x,t)}^2}e^{2s\varphi(x,t)}dxdt\cr
\leq C\para{s^{-2}\n(\widetilde{\vv}^1,\widetilde{y}^1)+s^{-4}\n(\widetilde{\vv}^2,\widetilde{y}^2)}.
\end{multline*}
We deduce that
\begin{multline}\label{7.6}
\int_\Omega\para{s^5\abs{f(x)}^2+s^3\abs{\nabla f(x)}^2+s\abs{\Delta f(x)}^2}e^{2s\varphi(x,t_0)}dx\cr
\leq C\para{\n(\widetilde{\vv}^1,\widetilde{y}^1)+s^{-2}\n(\widetilde{\vv}^2,\widetilde{y}^2)}\cr
+ s^2 e^{Ds}\para{\norm{\dive\vv(\cdot,t_0)}^2_{H^2(\Omega)}+
\norm{\dive\vv_t(\cdot,t_0)}^2_{H^2(\Omega)}+\norm{y(\cdot,t_0)}^2_{H^2(\Omega)}
}.
\end{multline}
Now, applying lemma \ref{L3.3} to $(\widetilde{\vv}^i, \widetilde{y}^i)$, for $i=1,2$, we obtain
\begin{multline}\label{7.7}
C\n(\widetilde{\vv}^i,\widetilde{y}^i)\leq \int_Q \para{s^2\abs{\nabla(f(x)\dive\widetilde{\uu}^i_t)}^2+\abs{\Delta(f(x)\dive\widetilde{\uu}^i_t)}^2}e^{2s\varphi}dxdt\cr
+ e^{Ds}\para{\norm{\vv^i}^2_{H^4(\omega\times (0, T))}+\norm{\dive\vv^i(\cdot,t_0)}_{H^2(\Omega)}^2+
\norm{\dive\vv^i_t(\cdot,t_0)}_{L^2(\Omega)}^2 }\cr
+ Cs^2e^{2d_0s}\para{\norm{\vv^i}
^2_{H^1(0,T;H^1(\Omega))} + \norm{y^i}_{L^2(Q)}^2}.
\end{multline}
%multiply by $s^2$
Inserting (\ref{7.7}) into (\ref{7.6}), and using Assumption $(A.2)$, we obtain
%\begin{multline}\label{7.8}
%\int_\Omega\para{s^5\abs{f(x)}^2+s^3\abs{\nabla f(x)}^2+s\abs{\Delta f(x)}^2}e^{2s\varphi(x,t_0)}dx\cr
%\leq C\para{\n(\widetilde{\vv}^1,\widetilde{y}^1)+s^{-2}\n(\widetilde{\vv}^2,\widetilde{y}^2)}\cr
%+e^{Cs}\para{\norm{\dive\vv(\cdot,t_0)}^2_{H^2(\Omega)}+
%\norm{\dive\vv_t(\cdot,t_0)}^2_{H^2(\Omega)}+\norm{y(\cdot,t_0)}^2_{H^2(\Omega)}
%}\cr
%\end{multline}
%Denote that
\begin{multline*}
\int_\Omega \para{s^5\abs{f(x)}^2+s^3\abs{\nabla f(x)}^2+s\abs{\Delta f(x)}^2}e^{2s\varphi(x,t_0)}dx \cr
\leq e^{Ds}\sum_{i=1,2} \para{\norm{\vv^i}^2_{H^4(\omega\times (0, T))}+\norm{\dive\vv^i(\cdot,t_0)}_{H^2(\Omega)}^2+
\norm{\dive\vv^i_t(\cdot,t_0)}_{H^2(\Omega)}^2+\norm{y(\cdot,t_0)}^2_{H^2(\Omega)}}
+ Cs^2e^{2d_0s}M_0
\end{multline*}
%\begin{multline*}
%\int_\Omega \para{s^5\abs{f(x)}^2+s^3\abs{\nabla f(x)}^2+s\abs{\Delta f(x)}^2}e^{2s\varphi(x,t_0)}dx \cr
%\leq e^{Ds}\para{\norm{\vv^1}^2_{X(\omega\times (0, T))} +  \norm{\vv^2}^2_{X(\omega\times (0, T))} }\cr
%+ Ce^{2d_0s}\para{ s^2\norm{\vv^1}^2_{H^1(0,T;H^1(\Omega))}
%+\norm{\vv^2}^2_{H^1(0,T;H^1(\Omega))} + s^2\norm{y^1}_{L^2(Q)}^2 + \norm{y^2}_{L^2(Q)}^2}\cr
% + e^{Ds}\para{\norm{\vv^1(\cdot,t_0)}_{H^3(\Omega)}^2+  \norm{\vv^2(\cdot,t_0)}_{H^3(\Omega)}^2 +
%\norm{\vv^1_t(\cdot,t_0)}_{H^1(\Omega)}^2 +  \norm{\vv^2_t(\cdot,t_0)}_{H^1(\Omega)}^2}\cr
% + s^2 e^{Cs}\para{\norm{\dive\vv(\cdot,t_0)}^2_{H^2(\Omega)}+
%\norm{\dive\vv_t(\cdot,t_0)}^2_{H^2(\Omega)}+\norm{y(\cdot,t_0)}^2_{H^2(\Omega)}
%}.
%\end{multline*}
We deduce that
\begin{multline*}
\int_\Omega \para{s^5\abs{f(x)}^2+s^3\abs{\nabla f(x)}^2+s\abs{\Delta f(x)}^2}e^{2s\varphi(x,t_0)}dx \cr
\leq e^{Ds} \para{\norm{\vv}^2_{H^6(\omega\times (0, T))}+\norm{y(\cdot,t_0)}^2_{H^2(\Omega)}+
\sum_{i=1}^3\norm{\dive\p_t^i\vv(\cdot,t_0)}_{H^2(\Omega)}^2}
+ Cs^2e^{2d_0s}M_0\cr
\leq e^{Ds} \para{\norm{\vv}^2_{H^6(\omega\times (0, T))}+N_{t_0}(\vv,y)}
+ Cs^2e^{2d_0s}M_0
\end{multline*}

%
%We have $\norm{\vv_t}_{X(\omega\times(0,T))}^2 = \norm{\vv^1}^2_{X(\omega\times (0, T))} +  \norm{\vv^2}^2_{X(\omega\times (0, T))}$, $N_{t_0}(\vv, y)$ the terms at $t_0$ and using Assumption $(A.2)$, we obtain
%\begin{multline}\label{7.9}
%\int_\Omega\para{s^5\abs{f(x)}^2+s^3\abs{\nabla f(x)}^2+s\abs{\Delta f(x)}^2}e^{2s\varphi(x,t_0)}dx\cr
%\leq C e^{Ds} \norm{\vv_t}_{X(\omega\times(0,T))}
%+ C e^{2d_0s} M_0^2 + e^{C s} N_{t_0}(\vv, y).
%\end{multline}
%%$  d= e^{\gamma\beta M}$
%Then, using (\ref{2.7}) and (\ref{2.8}),
%\begin{multline}\label{7.10}
%e^{2s d }\int_\Omega\para{s^5\abs{f(x)}^2+s^3\abs{\nabla f(x)}^2+s\abs{\Delta f(x)}^2}dx\cr
%\leq C e^{Ds} \norm{\vv_t}^2_{X(\omega\times(0,T))}
%+ C e^{2d_0s} M_0^2 + e^{C s} N_{t_0}(\vv, y).
%\end{multline}
%Moreover,
%\begin{multline}\label{7.11}
% \int_\Omega\para{s^5\abs{f(x)}^2+s^3\abs{\nabla f(x)}^2+s\abs{\Delta f(x)}^2}dx\cr
%\leq C e^{(D - 2 d)s} \norm{\vv_t}^2_{X(\omega\times(0,T))}
%+ C e^{2(d_0- d)s} M_0^2 + e^{(C-2 d) s} N_{t_0}(\vv, y).
%\end{multline}
%Besides,
%\begin{multline}
% \int_\Omega\para{s^5\abs{f(x)}^2+s^3\abs{\nabla f(x)}^2+s\abs{\Delta f(x)}^2}dx\cr
%\leq C e^{D_0s} (\norm{\vv_t}^2_{X(\omega\times(0,T))}+ N_{t_0}(\vv, y))
%+ C e^{-2(d- d_0)s} M_0^2,
%\end{multline}
%for all $s\geq s_*$.\\
Finally, minimizing the right hand side with respect to $s$, we obtain:  there exist $\kappa\in (0, 1)$ such that 
\begin{equation}
\norm{f}^2_{H^2(\Omega)}
\leq C \para{\norm{\vv}_{H^6(\omega\times(0,T)}^2+ N_{t_0}(\vv, y)}^\kappa.
\end{equation}
The proof of Theorem \ref{T.1} is completed.

%%%%%%%%%%%%%%%%%%%%%%%%%%%%%%%%%%%%%%%%%%%%%%%%%
%%%%%%%%%%%%%%%%%%%%%%%%%%%%%
\subsection{Proof of the stability estimate in determining $\varrho_1$ and $\varrho_2$}
%%%%%%%%%%%%%%%%%%%%%%%%%%%%%%%%%%%%
For simplicity, we set
$$\uu = \uu(\lambda^*, \varrho_1, \varrho_2), \quad \uu^* = \uu(\lambda^*, \widetilde{\varrho}_1, \widetilde{\varrho}_2) $$
and
$$\theta = \theta(\lambda^*, \varrho_1, \varrho_2), \quad \theta^* = \theta(\lambda^*, \widetilde{\varrho}_1, \widetilde{\varrho}_2). $$
Let $(\uu, \theta)$ satisfies the following equation

\begin{equation}\label{8.1}
\left\{
\begin{array}{lll}
 \uu_{tt}(x,t)- \Delta_{\mu, \lambda}\uu(x,t) -  \nabla(\lambda^* \dive \uu_t(x,t)) + \varrho_1 \nabla \theta (x,t)=0  & \textrm{in }\,\,
Q\cr
 \theta_t(x,t) - \Delta \theta(x,t) + \varrho_2 \dive \uu_t(x,t)=0  & \textrm{in }\,\,
Q\cr
\uu(x,t) = 0,\qquad \theta(x,t) =0 &\textrm{on }\,\,
\Sigma
\end{array}
\right.
\end{equation}

and $(\uu^*, \theta^*)$ satisfies the following equation

\begin{equation}\label{8.2}
\left\{
\begin{array}{lll}
 \uu^*_{tt}(x,t)- \Delta_{\mu, \lambda}\uu^*(x,t) -  \nabla(\lambda^* \dive\uu^*_t(x,t)) + \varrho_1 \nabla\theta^* (x,t)=0  & \textrm{in }\,\,
Q\cr
 \theta_t^*(x,t) - \Delta\theta^*(x,t) + \varrho_2 \dive \uu_t^* (x,t)=0  & \textrm{in }\,\,
Q\cr
\uu^*(x,t) = 0,\qquad \theta^*(x,t) =0 &\textrm{on }\,\,
\Sigma
\end{array}
\right.
\end{equation}

Let $$\vv = \uu - \uu^*, \qquad  y = \theta - \theta^*$$

$$p(x)=\widetilde{\varrho}_1(x) - \varrho_1 (x), \quad \text{and}\quad q(x)=\widetilde{\varrho}_2(x) -  \varrho_2 (x), $$

where $(\uu, \theta)$ satisfies (\ref{8.1})
and $(\uu_*, \theta_*)$ satisfies (\ref{8.2}).
Then, by a simple calculation, we have  %$(\vv, y)$ satisfy

\begin{equation}\label{8.3}
\left\{
\begin{array}{lll}
 \vv_{tt}- \Delta_{\mu, \lambda}\vv -  \nabla(\lambda^* \dive \vv_t) + \varrho_1 \nabla y = p(x) \nabla \theta^*  & \,\,
(x,t)\in Q,\cr
 y_t - \Delta y + \varrho_2 \dive \vv_t = q(x) \dive \uu_t^* &\,\,
(x,t) \in Q,\cr
\vv = 0,\qquad y =0 &\,\,
(x,t)\in\Sigma.
\end{array}
\right.
\end{equation}
In this subsection we discuss a linearized inverse problem of determining $\varrho_1$ and $\varrho_2$.
We assume that the assumptions $(A.1)$ and $(A.2)$ holds true, then our inverse problem is identification of $p(x)$ and $q(x)$. \\

For $i=1,2,3$, we denote $\vv^i = \p_t^i \vv$, $y^i = \p_t^i y$, where $(\vv, y)$  satisfies (\ref{8.3}).
Then,  we have
\begin{equation}\label{8.4}
\left\{
\begin{array}{lll}
 \vv^i_{tt}- \Delta_{\mu, \lambda}\vv^i -  \nabla(\lambda^* \dive \vv^i_t) + \varrho_1 \nabla y^i = p(x) \nabla \p_t^i\theta^*  & \,\,
(x,t)\in Q,\cr
 y^i_t - \Delta y^i + \varrho_2 \dive \vv^i_t= q(x) \dive \p_t^i\uu_t^* &\,\,
(x,t) \in Q,\cr
\vv^i = 0,\qquad y^i =0 &\,\,
(x,t)\in\Sigma.
\end{array}
\right.
\end{equation}
 We apply lemma \ref{L3.3} to $(\widetilde{\vv^i}, \widetilde{y^i})$, for $i=1,2,3$, we obtain
\begin{multline}\label{8.5}
C\n(\widetilde{\vv}^i,\widetilde{y}^i)\leq \int_Q \para{s \abs{q(x) \dive \p_t^i\uu^*_t}^2 +  s^2\abs{p(x) \nabla\p_t\theta^* }^2+\abs{\nabla(p(x) \nabla\p_t^i \theta^*) }^2}e^{2s\varphi}dxdt\cr
+ e^{Ds}\para{\norm{\vv^i}^2_{H^4(\omega\times (0, T))}+\norm{\dive\vv^i(\cdot,t_0)}_{H^2(\Omega)}^2+
\norm{\dive\vv^i_t(\cdot,t_0)}_{L^2(\Omega)}^2 }\cr
+ Cs^2e^{2d_0s}\para{\norm{\vv^i}
^2_{H^1(0,T;H^1(\Omega))} + \norm{y^i}_{L^2(Q)}^2},
%\leq \int_Q \para{s \abs{g(x)}^2 +  s^2\abs{f(x)}^2+\abs{\nabla f(x)}^2}e^{2s\varphi}dxdt\cr
%+ e^{Ds}\para{\norm{\vv^i}^2_{X(\omega_T)}+\norm{\vv^i(\cdot,t_0)}_{H^3(\Omega)}^2+
%\norm{\vv^i_t(\cdot,t_0)}_{H^1(\Omega)}^2 }\cr
%+ Cs^2e^{2d_0s}\para{\norm{\vv^i}
%^2_{H^1(0,T;H^1(\Omega))} + \norm{y^i}_{L^2(Q)}^2}
\end{multline}
where $$ \widetilde{\vv}^i = \eta (t) \vv^i,\qquad \widetilde{y}^i = \eta (t) y^i,\qquad i=1, 2, 3.$$
In terms of Assumption $(A.2)$, we obtain
\begin{multline}
C\n(\widetilde{\vv}^i,\widetilde{y}^i)
\leq \int_Q \para{s \abs{q(x)}^2 +  s^2\abs{p(x)}^2+\abs{\nabla p(x)}^2}e^{2s\varphi}dxdt\cr
+ e^{Ds}\para{\norm{\vv^i}^2_{H^4(\omega\times (0, T))}+\norm{\dive\vv^i(\cdot,t_0)}_{H^2(\Omega)}^2+
\norm{\dive\vv^i_t(\cdot,t_0)}_{L^2(\Omega)}^2 }
+ Cs^2e^{2d_0s}M_0.
\end{multline}

In the following we give two lemmas which will be used in the proof of the stability theorem of determining of the two spatially varying coefficients.
The first one is a Carleman estimate for the first-order partial differential equation:
%Carleman d'ordre 1
%Next we will show the following Carleman estimate for a first order partial differential operator.

%First of all,
We consider a first order partial differential equation:
\begin{equation}\label{r1}
P(x, D)\ff = \displaystyle\sum_{j=1}^{3} \gamma_j(x) \p_j \ff + \gamma_0(x) \ff, \qquad x\in \Omega,
\end{equation}
where
\begin{equation}\label{r2}
 \gamma_0(x)\in W^{1, \infty}(\Omega),\qquad \gamma(x)=(\gamma_1(x), \gamma_2(x), \gamma_3(x))\in (W^{1, \infty}(\Omega))^3
\end{equation}
and
\begin{equation}\label{r3}
\abs{\gamma(x). (x- x_0)}\geq c_0>0,\qquad \text{on}\; \overline{\Omega},
\end{equation}
with a constant $c_0>0$. Then we have the following Lemma
\begin{lemma}\label{L7}
In addition to (\ref{r2}) and (\ref{r3}). Then, there exist constants $s_*>0$ and $C>0$ such that
\begin{equation}\label{6.6}
s^2 \int_\Omega  e^{2s \rho (x)} \abs{\ff}^2 dx \leq  C \int_\Omega e^{2 s \varphi(x, t)}\abs{P(x,D) \ff}^2 dx dt
\end{equation}
 and
\begin{equation}\label{6.7}
s^2 \int_\Omega  e^{2s \rho (x)} \abs{\nabla \ff}^2 dx \leq
 C \int_\Omega e^{2 s \varphi(x, t)}(\abs{P(x, D)  \ff}^2 + \abs{\nabla (P(x,D) \ff)}^2 ) dx dt,
\end{equation}
 for all $\ff\in H^2(\Omega)$ satisfying $\ff(x) = 0$, $\nabla \ff(x) = 0$, $x\in \Gamma$ and all  $s> s_*$.

\end{lemma}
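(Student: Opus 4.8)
The plan is to prove Lemma~\ref{L7} by the classical conjugation argument for first--order Carleman estimates: establish (\ref{6.6}) first, deduce (\ref{6.7}) by differentiating the equation, and finally pass from the time--slice integrals (taken at $t=t_0$, where $\varphi(x,t_0)=\rho(x)$) to the space--time integrals that appear on the right--hand sides. Throughout, $\gamma$ denotes the fixed scalar parameter already chosen in the weight, so that $\nabla\rho=2\gamma(x-x_0)\rho$.

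First I would set $w=e^{s\rho}\ff$ and compute the conjugated operator:
\[
e^{s\rho}P(x,D)\big(e^{-s\rho}w\big)=\sum_{j=1}^{3}\gamma_j\,\p_j w+\Big(\gamma_0-2s\gamma\rho\,\gamma(x)\cdot(x-x_0)\Big)w=:Bw+A_sw,
\]
where $B$ is the first--order part and $A_s$ is multiplication by the zeroth--order coefficient. Assumption (\ref{r3}) together with $\rho\ge d>0$ gives $|A_s(x)|\ge 2s\gamma\,d\,c_0-\|\gamma_0\|_{L^\infty(\Omega)}\ge s\gamma\,d\,c_0$ for $s$ large; this transversality of the vector field $\gamma$ to the level sets of $\rho$ is what plays the role of pseudoconvexity here. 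Expanding $\|Bw+A_sw\|^2_{L^2(\Omega)}=\|A_sw\|^2+2\,\mathrm{Re}\langle A_sw,Bw\rangle+\|Bw\|^2$, I would integrate the cross term by parts; since $\ff=0$ on $\Gamma$ forces $w=0$ on $\Gamma$ there is no boundary contribution, and because $|\p_j(A_s\gamma_j)|\le Cs$ uniformly on $\overline{\Omega}$ one obtains $|2\,\mathrm{Re}\langle A_sw,Bw\rangle|\le Cs\|w\|^2$. Hence $\|Bw+A_sw\|^2\ge c s^2\|w\|^2-Cs\|w\|^2\ge \tfrac{c}{2}s^2\|w\|^2$ for $s\ge s_*$, which, after undoing the conjugation, reads $s^2\int_\Omega e^{2s\rho}|\ff|^2\,dx\le C\int_\Omega e^{2s\rho}|P(x,D)\ff|^2\,dx$. (One argues first for $\ff$ smooth vanishing to first order on $\Gamma$ and then uses density in $H^2(\Omega)$.)

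For (\ref{6.7}) I would apply $\p_k$ to the equation: $P(x,D)(\p_k\ff)=\p_k\big(P(x,D)\ff\big)-\sum_j(\p_k\gamma_j)\p_j\ff-(\p_k\gamma_0)\ff$. Because $\nabla\ff=0$ on $\Gamma$, each $\p_k\ff$ is admissible in (\ref{6.6}); applying it, summing over $k$, and using $\gamma_j,\gamma_0\in W^{1,\infty}(\Omega)$,
\[
s^2\int_\Omega e^{2s\rho}|\nabla\ff|^2\,dx\le C\int_\Omega e^{2s\rho}\big(|\nabla(P(x,D)\ff)|^2+|\nabla\ff|^2+|\ff|^2\big)\,dx.
\]
For $s$ large the term $C\int e^{2s\rho}|\nabla\ff|^2$ is absorbed into the left side and $C\int e^{2s\rho}|\ff|^2$ is controlled by (\ref{6.6}), which yields the $\Omega$--version of (\ref{6.7}). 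Finally, to replace $\int_\Omega e^{2s\rho}(\cdot)\,dx$ by $\int_Q e^{2s\varphi(x,t)}(\cdot)\,dx\,dt$ I would use that $\varphi(x,t)=\rho(x)e^{-\beta\gamma(t-t_0)^2}$ attains its $t$--maximum $\rho(x)$ nondegenerately at $t=t_0$, so Laplace's method gives $\int_0^T e^{2s\varphi(x,t)}\,dt\ge c\,s^{-1/2}e^{2s\rho(x)}$; the half--power loss in $s$ is harmless for $s$ large.

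The step I expect to be the main obstacle is the control of the cross term $2\,\mathrm{Re}\langle A_sw,Bw\rangle$: one must transfer the derivative from $w$ onto the $s$--dependent coefficient $A_s\gamma_j$, check that the boundary terms vanish under the Dirichlet conditions on $\ff$, and verify that the resulting bound is genuinely only $O(s)$ so that it is swallowed by the $O(s^2)$ coercivity coming from $A_s$. The parallel bookkeeping for (\ref{6.7}) --- tracking the commutators $(\p_k\gamma_j)\p_j\ff$ and absorbing the first--order remainder --- is where the $W^{1,\infty}$ regularity of the coefficients and the hypothesis $\nabla\ff|_\Gamma=0$ enter essentially.
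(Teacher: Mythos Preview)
Your argument is correct. For (\ref{6.6}) you take a genuinely different route from the paper: you use the conjugation method, setting $w=e^{s\rho}\ff$ and expanding the squared $L^2$--norm of the conjugated operator $Bw+A_sw$, whereas the paper uses the multiplier method --- it multiplies $P(x,D)\ff$ by $\ff\,\zeta\,e^{2s\rho}$ with $\zeta(x)=\gamma(x)\cdot(x-x_0)$, integrates by parts, and extracts the coercive term $2s\int_\Omega|\ff|^2\zeta^2 e^{2s\rho}$ directly. Both are standard for first--order Carleman estimates and both rely on exactly the same transversality condition (\ref{r3}); your approach perhaps makes clearer why $|A_s|\gtrsim s$ (hence the $s^2$ gain), while the paper's multiplier computation is slightly shorter and avoids bookkeeping with $\|Bw\|^2$. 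For (\ref{6.7}) your strategy --- apply (\ref{6.6}) to $\partial_k\ff$, commute, and absorb --- coincides with the paper's. One remark: your final Laplace--method step, converting $\int_\Omega e^{2s\rho}(\cdot)\,dx$ into a space--time integral $\int_Q e^{2s\varphi}(\cdot)\,dx\,dt$, is not carried out in the paper; the paper's own proof actually concludes with $e^{2s\rho(x)}=e^{2s\varphi(x,t_0)}$ on both sides, and the ``$dx\,dt$'' on the right of (\ref{6.6})--(\ref{6.7}) is a typographical slip rather than a genuine time integral. Your extra argument is harmless but unnecessary.
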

\begin{proof}
Let denote $\zeta(x)= \gamma(x)\cdot(x- x_0) $.
We multiply the both sides of (\ref{r1}) by $ \ff(x) \zeta(x) e^{2 s \rho (x)}$ and using the divergence theorem, we obtain
\begin{multline}\label{r4}
\int_\Omega  P(x, D) \ff(x) \ff(x) \zeta(x) e^{2 s \rho (x)} dx\cr
= \int_\Omega \nabla\ff(x).\gamma(x) \ff(x) \zeta(x) e^{2 s \rho (x)} dx
+ \int_\Omega\gamma_0(x) \abs{\ff(x)}^2  \zeta(x) e^{2 s \rho (x)} dx\cr
= - \int_\Omega \ff(x) \dive( \gamma(x)\ff(x) \zeta(x) e^{2 s \rho (x)}) dx
+ \int_\Omega\gamma_0(x) \abs{\ff(x)}^2  \zeta(x) e^{2 s \rho (x)} dx\cr
= - \int_\Omega \abs{\ff(x)}^2  \dive(\zeta(x) \gamma(x) ) e^{2 s \rho (x)} dx
- \int_\Omega \gamma(x)\cdot\nabla\ff(x) \zeta(x) \ff(x)  e^{2 s \rho (x)} dx\cr
- 2 s \int_\Omega \abs{\ff(x)}^2   \zeta^2(x) e^{2 s \rho (x)} dx
+ \int_\Omega\gamma_0(x) \abs{\ff(x)}^2  \zeta(x) e^{2 s \rho (x)} dx.
\end{multline}
Using (\ref{r3}), and the fact that
$$ \gamma(x) \cdot \nabla\ff(x) = P(x, D) \ff - \gamma_0(x) \ff(x)$$
We deduce, in terms of (\ref{r4}) and the Cauchy-Schwartz inequality,
\begin{eqnarray*}
2 c_0^2 s \int_\Omega \abs{\ff(x)}^2  e^{2 s \rho (x)} dx &\leq & 2 s \int_\Omega \abs{\ff(x)}^2   (\zeta(x))^2 e^{2 s \rho (x)} dx \cr
&\leq& 2 \int_\Omega  \abs{P(x, D) \ff} \abs{\ff(x)} e^{2 s \rho (x)} dx
+  \int_\Omega  \abs{\gamma_0(x)\zeta(x)} \abs{\ff(x)}^2  e^{2 s \rho (x)} dx\cr
&\leq& C \int_\Omega  \abs{P(x, D) \ff} \abs{\ff(x)} e^{2 s \rho (x)} dx
+ C \int_\Omega \abs{\ff(x)}^2  e^{2 s \rho (x)} dx.
\end{eqnarray*}
Then for large $s$, we get
\begin{eqnarray*}
 s \int_\Omega \abs{\ff(x)}^2  e^{2 s \rho (x)} dx
&\leq& C \int_\Omega  \abs{P(x, D) \ff} \abs{\ff(x)} e^{2 s \rho (x)} dx.
\end{eqnarray*}
%\begin{multline*}
%2 \int_\Omega  P(x, D) \ff \zeta(x)\ff(x) e^{2 s \rho (x)} dx
%=- 2 s \int_\Omega \abs{\ff(x)}^2   (\zeta(x))^2 e^{2 s \rho (x)} dx \cr
%+  \int_\Omega  \gamma_0(x) \abs{\ff(x)}^2 \zeta(x) e^{2 s \rho (x)} dx
%\end{multline*}
On the other hand, for all small $\varepsilon >0$ there exist a constant $C_\varepsilon>0$ such that
 \begin{multline}\label{r5}
\int_\Omega  \abs{P(x, D) \ff \ff(x) \zeta(x) }e^{2 s \rho (x)} dx \leq 
\frac{C_\varepsilon}{s}\int_\Omega  \abs{P(x, D) \ff}^2 e^{2 s \rho (x)} dx
+ \varepsilon s \int_\Omega \abs{\ff(x)}^2  e^{2 s \rho (x)} dx.
\end{multline}
In terms of (\ref{r5}), we have
\begin{equation*}
s \int_\Omega \abs{\ff(x)}^2  e^{2 s \rho (x)} dx \leq  \frac{C_\varepsilon}{s}\int_\Omega  \abs{P(x, D) \ff}^2 e^{2 s \rho (x)} dx
+ \varepsilon s \int_\Omega \abs{\ff(x)}^2  e^{2 s \rho (x)} dx.
\end{equation*}
Moreover, for all small $\varepsilon>0$, we get
\begin{equation*}
s \int_\Omega \abs{\ff(x)}^2  e^{2 s \rho (x)} dx
 \leq  \frac{C_\varepsilon}{s}\int_\Omega  \abs{P(x, D) \ff}^2 e^{2 s \rho (x)} dx.
\end{equation*}
Consequently,
\begin{equation*}
s^2 \int_\Omega \abs{\ff(x)}^2  e^{2 s \rho (x)} dx
 \leq C_\varepsilon\int_\Omega  \abs{P(x, D) \ff}^2 e^{2 s \rho (x)} dx.
\end{equation*}

Since $$ P(x,D) \p_j\ff = \p_j P(x,D)\ff - (\p_j P(x,D))\ff,\qquad \p_j \ff\mid_{\Gamma}=0, $$
we apply (\ref{6.6}) to $\p_j \ff$, for $j= 1, 2, 3$, we get
\begin{multline*}
s^2 \int_\Omega   e^{2s \rho (x)} \abs{\p_j\ff}^2 dx
\leq  C \int_\Omega e^{2 s \varphi(x, t)}\abs{P(x,D) \p_j\ff}^2 dx dt\cr
\leq C \int_\Omega e^{2 s \varphi(x, t)}\abs{\p_j P(x,D)\ff - (\p_j P(x,D))\ff}^2 dx dt\cr
\leq C \int_\Omega e^{2 s \varphi(x, t)}(\abs{\ff}^2 + \abs{\p_j\ff}^2) dx dt
+ C \int_\Omega e^{2 s \varphi(x, t)} \abs{\p_j P(x,D)\ff}^2  dx dt\cr
\leq C \int_\Omega e^{2 s \varphi(x, t)}(\abs{P(x,D)\ff}^2 + \abs{\p_j P(x,D)\ff}^2) dx dt
+ C \int_\Omega e^{2 s \varphi(x, t)}\abs{\nabla\ff}^2 dx dt.
\end{multline*}
Therefore,
\begin{multline*}
s^2 \int_\Omega  e^{2s \rho (x)} \abs{\nabla\ff}^2 dx
\leq   C \int_\Omega e^{2 s \varphi(x, t)}(\abs{P(x,D)\ff}^2+ \abs{\nabla P(x,D)\ff}^2) dx dt\cr
 + C \int_\Omega e^{2 s \varphi(x, t)}\abs{\nabla\ff}^2 dx dt.
\end{multline*}
For sufficiently large $s$, we can complete the proof of the lemma \ref{L7}.
%\begin{eqnarray*}
%s^2 \int_\Omega && e^{2s \rho (x)} \abs{\nabla\ff}^2 dx\cr
%&\leq&  C \int_\Omega e^{2 s \varphi(x, t)}(\abs{P(x,D)\ff}^2 + \abs{\nabla P(x,D)\ff}^2) dx dt
%\end{eqnarray*}

\end{proof}

The second one, is
\begin{lemma}\label{L6}{}
Let $( \vv^i, y^i)$ satisfy
\begin{equation}\label{8.5}
\left\{
\begin{array}{lll}
 \vv^i_{tt}- \Delta_{\mu, \lambda}\vv^i -  \nabla(\lambda^* \dive \vv^i_t) + \varrho_1 \nabla y^i = \ff^i &\,\,
(x,t) \in Q,\cr
 y^i_t - \Delta y^i + \varrho_2 \dive \vv^i_t= g^i & \,\,
(x,t) \in Q,\cr
\vv^i = 0,\qquad y^i =0 &\,\,
(x,t) \in \Sigma.
\end{array}
\right.
\end{equation}
%for $i=1,2,3,4$.\\
 Then there exist constants $C>0$, $s_*>0$, for $i=1,2,3$, such that
\begin{multline}\label{6.0}
s \int_\Omega  e^{2s \varphi(x, t_0)} \abs{y^i(x, t_0)}^2 dx \leq  \int_Q e^{2 s \varphi}\abs{g^i(x,t)}^2 dxdt
+ C N_\sigma (\widetilde{\vv^i}, \widetilde{y^i})\cr
+  s e^{2 s d_0} \para{\norm{\vv^i}_{H^1(0, T; H^1(\Omega))}^2 + \norm{y^i}_{L^2(Q)}^2},
\end{multline}
\begin{equation}\label{6.5}
s^2 \int_\Omega e^{2 s \varphi(x, t_0)}\abs{\vv^i(x, t_0)}^2dx
\leq C \para{ N_\sigma (\widetilde{\vv}^i, \widetilde{y}^i)+ s^{-2} N_\sigma (\widetilde{\vv_t}^i, \widetilde{y_t}^i)}.
\end{equation}
\begin{equation}\label{6.3}
s^2 \int_\Omega  e^{2s \varphi(x, t_0)} \abs{\dive \vv^i(x, t_0)}^2 dx \leq
 C \para{ N_\sigma (\widetilde{\vv^i}, \widetilde{y^i})+ s^{-2} N_\sigma (\widetilde{\vv_t^i}, \widetilde{y_t^i})}
\end{equation}
 and
 \begin{equation}\label{6.4}
\int_\Omega  e^{2s \varphi(x, t_0)} \abs{\nabla\dive \vv^i(x, t_0)}^2 dx \leq C N_\sigma (\widetilde{\vv^i}, \widetilde{y^i}),
\end{equation}
for all $s \geq s_*$.
\end{lemma}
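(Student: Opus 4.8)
The plan is to derive all four inequalities \eqref{6.0}, \eqref{6.5}, \eqref{6.3}, \eqref{6.4} by one and the same device: apply the elementary trace-type estimate of Lemma \ref{L4} to a suitably weighted function $z(x,t)=e^{s\varphi(x,t)}w(x,t)$, where $w$ is the cut-off quantity $\widetilde{y}^i$, $\widetilde{\vv}^i$, $\dive\widetilde{\vv}^i$ or $\nabla\dive\widetilde{\vv}^i$, and then to recognise every space--time integral that appears on the right as a piece of $\n(\widetilde{\vv}^i,\widetilde{y}^i)$ (or of $\n(\widetilde{\vv}_t^i,\widetilde{y}_t^i)$). Before doing so I would record three facts used repeatedly: first, $\eta\equiv 1$ near $t_0$, since by \eqref{2.6}--\eqref{2.7} one has $\psi(\cdot,t_0)\ge\beta M>\beta M-\delta/2\ge\psi$ on $(0,2\varepsilon)\cup(T-2\varepsilon,T)$, so $t_0$ lies in the plateau of $\eta$ and $z(\cdot,t_0)=e^{s\varphi(\cdot,t_0)}w(\cdot,t_0)$ with the untruncated $w$; second, $\sigma=s\gamma\varphi$ satisfies $\sigma\le Cs$, $\sigma^{-1}\le Cs^{-1}$, $|\varphi_t|\le C$ for the fixed large $\gamma$; third, $\mathrm{Supp}(\eta_t)\subset(0,2\varepsilon)\cup(T-2\varepsilon,T)$, a region where $\varphi\le d_0$, so any integral carrying a factor $\eta_t$ is bounded by $Ce^{2sd_0}$ times a low-order Sobolev norm of $(\vv^i,y^i)$.

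For \eqref{6.3}, \eqref{6.4} and \eqref{6.5} this runs with no surprise. Taking, say, $z=e^{s\varphi}\dive\widetilde{\vv}^i$, and using that $\eta$ depends only on $t$ so that $z_t=e^{s\varphi}\big(\dive\p_t\widetilde{\vv}^i+s\varphi_t\dive\widetilde{\vv}^i\big)$, Lemma \ref{L4} multiplied by $s^2$ gives
\[
s^2\!\int_\Omega e^{2s\varphi(\cdot,t_0)}\abs{\dive\vv^i(\cdot,t_0)}^2\,dx \le C\!\int_Q s\,e^{2s\varphi}\big(\abs{\dive\p_t\widetilde{\vv}^i}^2+s^2\abs{\dive\widetilde{\vv}^i}^2\big)\,dx\,dt .
\]
Here $s^3 e^{2s\varphi}\abs{\dive\widetilde{\vv}^i}^2$ is dominated by the weight $s^4\abs{\dive\vv}^2$ in $\n$, and $s\,e^{2s\varphi}\abs{\dive\p_t\widetilde{\vv}^i}^2$ is $s^{-2}$ times the weight $s^3\abs{\dive\vv_t}^2$ in $\n$, which yields \eqref{6.3}. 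Estimate \eqref{6.4} is identical with $z=e^{s\varphi}\nabla\dive\widetilde{\vv}^i$, now invoking the weights $s^2\abs{\nabla\dive\vv}^2$ and $s\abs{\nabla\dive\vv_t}^2$; and \eqref{6.5} is the same with $z=e^{s\varphi}\widetilde{\vv}^i$, using $s^3\abs{\vv}^2$ and $s\abs{\nabla_{x,t}\vv}^2$.

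The only estimate needing an extra ingredient is \eqref{6.0}, because there the right-hand side controls $\widetilde{y}^i$ only through $\abs{\Delta\widetilde{y}^i}^2$, $s^2\abs{\nabla\widetilde{y}^i}^2$, $s^4\abs{\widetilde{y}^i}^2$, and not through $\p_t\widetilde{y}^i$. After taking $z=e^{s\varphi}\widetilde{y}^i$ I would therefore eliminate the time derivative using the parabolic equation for $y^i$, namely $y^i_t-\Delta y^i+\varrho_2\dive\vv^i_t=g^i$, writing $\p_t(\eta y^i)=\eta_t y^i+\eta(\Delta y^i-\varrho_2\dive\vv^i_t+g^i)$ and commuting $\dive$ past the cut-off, $\eta\dive\vv^i_t=\dive\p_t\widetilde{\vv}^i-\eta_t\dive\vv^i$. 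This splits $z_t$ into a main part $e^{s\varphi}(\Delta\widetilde{y}^i-\varrho_2\dive\p_t\widetilde{\vv}^i+s\varphi_t\widetilde{y}^i)$, whose square integrates into $\n(\widetilde{\vv}^i,\widetilde{y}^i)$ (the term $\abs{\Delta\widetilde{y}^i}^2$ entering with weight $1$, and $\abs{\dive\p_t\widetilde{\vv}^i}^2$ with weight $s^{-2}$ times $s^3\abs{\dive\vv_t}^2$); the term $e^{s\varphi}\eta g^i$, whose square produces exactly $\int_Q e^{2s\varphi}\abs{g^i}^2$ — it is crucial here that Lemma \ref{L4} contributes the factor $\sigma^{-1}$, so that $s\cdot\sigma^{-1}\le C$ and $g^i$ is picked up with weight $1$, not with a power of $\sigma$; and the two $\eta_t$-terms $e^{s\varphi}\eta_t y^i$ and $e^{s\varphi}\eta_t\dive\vv^i$, supported where $\varphi\le d_0$, which produce $Cse^{2sd_0}\big(\norm{y^i}^2_{L^2(Q)}+\norm{\vv^i}^2_{H^1(0,T;H^1(\Omega))}\big)$. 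Collecting these is precisely \eqref{6.0}.

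I expect the delicate point to be exactly this bookkeeping in \eqref{6.0}: one must use the parabolic equation at the right moment, commute $\p_t$ and $\dive$ past $\eta$ correctly, keep the source $g^i$ with weight exactly $1$, and check that every remainder carries either a favourable negative power of $s$ or the exponentially small prefactor $e^{2sd_0}$. The remaining three estimates \eqref{6.5}, \eqref{6.3}, \eqref{6.4} are then purely mechanical consequences of Lemma \ref{L4} together with the explicit list of weights in the definition of $\n$.
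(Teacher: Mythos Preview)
Your proposal is correct and follows essentially the same approach as the paper: apply Lemma~\ref{L4} to $z=e^{s\varphi}w$ with $w$ equal to $\widetilde{y}^i$, $\widetilde{\vv}^i$, $\dive\widetilde{\vv}^i$, $\nabla\dive\widetilde{\vv}^i$ respectively, and for \eqref{6.0} eliminate $\partial_t\widetilde{y}^i$ via the parabolic equation before bounding each resulting term by a piece of $\n$ or by an $e^{2sd_0}$ remainder. Your bookkeeping of the $\eta_t$-terms and of the factor $s\varphi_t$ coming from $\partial_t e^{s\varphi}$ is in fact more explicit than the paper's own write-up.
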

\begin{proof}
Applying Lemma \ref{L4} to $z^i(x, t) = \eta(t)  e^{2 s\varphi(x,t)} y^i(x,t)  =e^{2 s\varphi(x,t)} \widetilde{y^i}(x,t)$ and by the second
equation of (\ref{8.5}), we obtain
\begin{multline*}
\int_\Omega e^{2 s \varphi(x, t_0)}\abs{y^i(x, t_0)}^2dx \leq s \int_Q  e^{2 s\varphi}\abs{\widetilde{y}^i(x,t)}^2dxdt + s^{-1} \int_Q  e^{2 s \varphi}\abs{\widetilde{y_t}^i(x, t)}^2dxdt \cr
\leq s\int_Q  e^{2 s\varphi}\abs{\widetilde{y}^i(x,t)}^2dxdt + s^{-1}\int_Q  e^{2 s \varphi}\abs{g^i(x,t)}^2 dxdt+  s^{-1}\int_Q  e^{2 s \varphi}\abs{\Delta \widetilde{y}^i(x,t)}^2 dxdt\cr + s^{-1} \int_Q  e^{2 s \varphi}\abs{\dive \widetilde{\vv^i_t}}^2 dxdt+ e^{2 s d_0} (\norm{\vv^i}^2_{H^1(0,T;H^1(\Omega))}+ \norm{y^i}^2_{L^2(Q)}).
\end{multline*}
Then,
\begin{multline*}
Cs \int_\Omega  e^{2s \varphi(x, t_0)} \abs{y^i(x, t_0)}^2 dx \leq  \int_Q e^{2 s \varphi}\abs{g^i(x,t)}^2 dxdt\cr
+  \int_Q e^{2 s \varphi} \Big( \abs{\Delta \widetilde{y}^i}^2  + s^2 \abs{\widetilde{y}^i}^2 + \abs{\dive \widetilde{\vv_t}^i}^2\Big) dxdt
+  s e^{2 s d_0} (\norm{\vv^i}_{H^1(0, T; H^1(\Omega))}^2 + \norm{y^i}_{L^2(Q)}^2)\cr
\leq  \int_Q e^{2 s \varphi}\abs{g^i(x,t)}^2 dxdt
+ C N_\sigma (\widetilde{\vv^i}, \widetilde{y^i})
+  s e^{2 s d_0} (\norm{\vv^i}_{H^1(0, T; H^1(\Omega))}^2 + \norm{y^i}_{L^2(Q)}^2).
\end{multline*}

Applying Lemma \ref{L4} to $z^i(x, t) = \eta(t)  e^{2 s\varphi(x,t)} \vv^i(x,t)  =e^{2 s\varphi(x,t)}  \widetilde{\vv}^i(x,t)$, we obtain
\begin{multline*}
s^2 \int_\Omega e^{2 s \varphi(x, t_0)}\abs{\vv^i(x, t_0)}^2dx \leq s^3 \int_Q  e^{2 s\varphi}\abs{ \widetilde{\vv}^i(x,t)}^2dxdt + s\int_Q  e^{2 s \varphi}\abs{\widetilde{\vv_t}^i(x, t)}^2dxdt \cr
\leq C \para{ N_\sigma (\widetilde{\vv}^i, \widetilde{y}^i)+ s^{-2} N_\sigma (\widetilde{\vv_t}^i, \widetilde{y_t}^i)}.
\end{multline*}

Applying Lemma \ref{L4} to $z^i(x, t) = \eta(t)  e^{2 s\varphi(x,t)} \dive \vv^i(x,t)  =e^{2 s\varphi(x,t)} \dive \widetilde{\vv}^i(x,t)$, we obtain
\begin{multline*}
s^2 \int_\Omega e^{2 s \varphi(x, t_0)}\abs{\dive \vv^i(x, t_0)}^2dx \leq s^3 \int_Q  e^{2 s\varphi}\abs{\dive \widetilde{\vv}^i(x,t)}^2dxdt + s\int_Q  e^{2 s \varphi}\abs{\dive \widetilde{\vv_t}^i(x, t)}^2dxdt \cr
\leq C \para{ N_\sigma (\widetilde{\vv}^i, \widetilde{y}^i)+ s^{-2} N_\sigma (\widetilde{\vv_t}^i, \widetilde{y_t}^i)}.
\end{multline*}

Finally, applying Lemma \ref{L4} to $z^i(x, t) = \eta(t)  e^{2 s\varphi(x,t)} \nabla\dive \vv^i(x,t)  =e^{2 s\varphi(x,t)} \nabla\dive \widetilde{\vv}^i(x,t)$, we obtain
\begin{multline*}
\int_\Omega e^{2 s \varphi(x, t_0)}\abs{\nabla\dive \vv^i(x, t_0)}^2dx \leq s \int_Q  e^{2 s\varphi}\abs{\nabla\dive \widetilde{\vv}^i(x,t)}^2dxdt + s^{-1}\int_Q  e^{2 s \varphi}\abs{\nabla\dive \widetilde{\vv_t}^i(x, t)}^2dxdt\cr
\leq C N_\sigma (\widetilde{\vv}^i, \widetilde{y}^i).
\end{multline*}

This complete the proof of the Lemma.
\end{proof}

Now we complete the proof of Theorem \ref{T.2}, let $v=\dive\vv$. Then we have
\begin{equation}\label{e3}
\dive (p(x)\nabla\theta^*(x,t_0))=\para{v_{tt}-(2\mu+\lambda)\Delta v -\Delta(\lambda^*v_t)+ \dive (\varrho_1\nabla y)}(x,t_0)\equiv F(x,t_0).
\end{equation}
Note that by the assumption $(A.2)$, we have $p(x)=0$ and $\nabla p(x)=0$ on $\Gamma$ and
$$
\abs{\nabla\theta^*(x,t_0)}\geq\varepsilon.
$$
Furthermore the assumption (\ref{1.5}) imply that
\begin{multline}
\int_\Omega \abs{F}^2 e^{2s\varphi(x,t_0)}dx
\leq C\int_\Omega \abs{\dive \vv^2(x,t_0)}^2 e^{2s\varphi(x,t_0)}dx\cr
+ C\int_\Omega\abs{\Delta v(x,t_0)}^2 e^{2s\varphi(x,t_0)}dx
+ C\int_\Omega\abs{\Delta (\lambda^* v_t(x,t_0))}^2 e^{2s\varphi(x,t_0)}dx\cr
+C\int_\Omega \abs{\dive (\varrho_1\nabla y(x,t_0))}^2e^{2s\varphi(x,t_0)}dx.
\end{multline}
Using (\ref{6.3}) in the Lemma \ref{L6}, we get
\begin{multline}\label{9.0}
s^2 \int_\Omega \abs{F}^2 e^{2s\varphi(x,t_0)}dx
\leq C\para{N_\sigma (\widetilde{\vv}^2, \widetilde{y}^2) + s^{-2} N_\sigma (\widetilde{\vv}^3, \widetilde{y}^3)}\cr
+ Ce^{D s } \para{ \norm{\dive\vv(., t_0)}_{H^2(\Omega)}^{2} + \norm{\dive\vv_t(., t_0)}_{H^2(\Omega)}^{2} + \norm{y (., t_0)}_{H^2(\Omega)}^{2}  }.
\end{multline}
Similarly, we have
\begin{multline}
\int_\Omega \abs{\nabla F}^2e^{2s\varphi(x,t_0)}dx\leq
C\int_\Omega  \abs{\nabla \dive \vv^2(x,t_0)}^2e^{2s\varphi(x,t_0)}dx\cr
+ C e^{Ds} \para{ \norm{\dive\vv(., t_0)}_{H^3(\Omega)}^{2} + \norm{\dive\vv_t(., t_0)}_{H^3(\Omega)}^{2} + \norm{y (., t_0)}_{H^3(\Omega)}^{2}}.
\end{multline}
Using  (\ref{6.4}) in the lemma \ref{L6}, we get
\begin{multline}\label{9.1}
\int_\Omega \abs{\nabla F}^2 e^{2s\varphi(x,t_0)}dx
\leq C N_\sigma (\widetilde{\vv}^2, \widetilde{y}^2)
+ C e^{Ds} \para{ \norm{\dive\vv(., t_0)}_{H^3(\Omega)}^{2} + \norm{\dive\vv_t(., t_0)}_{H^3(\Omega)}^{2} + \norm{y (., t_0)}_{H^3(\Omega)}^{2}}.
\end{multline}
By the second equation of (\ref{8.3}), we have
\begin{equation}\label{8.8}
\begin{array}{lll}
y_t(x,t_0) = q(x) \dive \uu^*_t(x, t_0) + \Delta y (x,t_0) - \varrho_2 \dive \vv_t (x,t_0) \textrm{in }\,\,
\Omega
\end{array}
\end{equation}
Then,
\begin{equation}
C\abs{q(x)} \leq \abs{q(x)} \abs{\dive \uu^*_t(x, t_0)}
 \leq \abs{  \Delta y (x,t_0)} + \abs{\dive \vv^1(x,t_0)} + \abs{ y^1(x,t_0)}.
\end{equation}
Moreover,
\begin{multline}
Cs \int_\Omega  e^{2s \varphi(x, t_0)} \abs{q(x)}^2 dx \leq
s \int_\Omega  e^{2s \varphi(x, t_0)}( \abs{  \Delta y (x,t_0)}^2 + \abs{\dive \vv^1(x,t_0)}^2 + \abs{ y^1(x,t_0)}^2) dx\cr
\leq s \int_\Omega  e^{2s \varphi(x, t_0)}  \abs{ y^1(x,t_0)}^2dx +
s \int_\Omega  e^{2s \varphi(x, t_0)} \abs{\dive \vv^1(x,t_0)} ^2dx
+   e^{Ds} \norm{y(.,t_0)}_{H^2(\Omega)}^{2}
\end{multline}
Then, using Lemma \ref{L6}, we get
\begin{multline}\label{9.2}
s \int_\Omega  e^{2s \varphi(x, t_0)} \abs{q(x)}^2 dx
\leq C \para{N_\sigma (\widetilde{\vv}^1, \widetilde{y}^1) + s^{-2} N_\sigma(\widetilde{\vv}^2, \widetilde{y}^2)+ s^{-4} N_\sigma (\widetilde{\vv}^3, \widetilde{y}^3)}   \cr
+ C s e^{2 s d_0} (\norm{\vv^1}_{H^1(0, T; H^1(\Omega))}^2) + \norm{y^1}_{L^2(Q)}^2) + e^{Ds}\norm{y(.,t_0)}_{H^2(\Omega)}^{2}.
\end{multline}
%%%%%%%%%%%%%%%%%%%%%%%%%%%%%%%%%%%%%%%%%%%%%%%%%
Then again, by (\ref{e3}), we observe that
$$F(x,t_0) = \nabla p(x)\cdot\nabla\theta^*(x,t_0) + p(x)\Delta\theta^*(x,t_0),\quad x\in\Omega .$$
So, we need Carleman estimate for the first-order partial differential equation given by  Lemma \ref{L7},
then we have
\begin{equation}\label{e4}
s^2 \int_\Omega  e^{2s \varphi(x, t_0)} \abs{p(x)}^2 dx \leq  C \int_\Omega e^{2 s \varphi(x, t)}\abs{F(x,t)}^2 dx dt,
\end{equation}
 and
\begin{equation}\label{e5}
s^2 \int_\Omega  e^{2s \varphi(x, t_0)} \abs{\nabla p(x)}^2 dx \leq
 C \int_\Omega e^{2 s \varphi(x, t)}(\abs{F}^2 + \abs{\nabla F}^2 ) dx dt.
\end{equation}
Moreover, using (\ref{9.0}), (\ref{9.1}) and (\ref{9.2})
\begin{multline*}\label{9.3}
\int_\Omega  e^{2s \varphi(x, t_0)}\para{ s^4 \abs{p(x)}^2 +   s^2\abs{\nabla p(x)}^2 + s \abs{q(x)}^2} dx\cr
\leq \int_\Omega  e^{2s \varphi(x, t_0)}\para{ s^2 \abs{F(x)}^2 +  \abs{\nabla F(x)}^2 + s \abs{q(x)}^2} dx\cr
\leq C \para{N_\sigma (\widetilde{\vv}^1, \widetilde{y}^1)+ N_\sigma (\widetilde{\vv}^2, \widetilde{y}^2)+ s^{-2} N_\sigma (\widetilde{\vv}^3, \widetilde{y}^3)} \cr
+ Ce^{Ds} \para{ \norm{\dive\vv(., t_0)}_{H^3(\Omega)}^{2} + \norm{\dive\vv_t(., t_0)}_{H^3(\Omega)}^{2} + \norm{y (., t_0)}_{H^3(\Omega)}^{2}}
+ C s e^{2 s d_0} M^2_0.
\end{multline*}
Furthermore,
\begin{multline*}
\int_\Omega  e^{2s \varphi(x, t_0)}\para{ s^4 \abs{p(x)}^2 + s^2\abs{\nabla p(x)}^2 + s \abs{q(x)}^2} dx\leq
C \int_Q s\abs{q}^2e^{2s\varphi}dxdt\cr
+ C e^{Ds} \displaystyle\sum_{i=1}^{3}\para{\norm{\vv^i}^2_{H^4(\omega\times(0,T))}
+\norm{\dive\vv^i(\cdot,t_0)}_{H^3(\Omega)}^2+
\norm{\dive\vv^i_t(\cdot,t_0)}_{H^3(\Omega)}^2+\norm{y(.,t_0)}_{H^3(\Omega)}^{2}}\cr
+ Cs^2e^{2d_0s}M^2_0
\end{multline*}
for all large $s>0$.\\

Using the Lebesgue theorem, we get as $s \rightarrow \infty$
\begin{equation*}
 %C s\abs{g}^2e^{2s\varphi}dxdt =
 C s \int_\Omega \abs{q}^2e^{2s\varphi(x,t_0)} \int_{0}^{T }e^{2s(\varphi(x,t)- \varphi(x,t_0))}dt dx
\leq C\, s\; o(1)\int_\Omega \abs{q}^2e^{2s\varphi(x,t_0)} dx.
\end{equation*}
Then, we get
\begin{multline*}
\int_\Omega  e^{2s \varphi(x, t_0)}\para{ s^4 \abs{p(x)}^2 + s^2\abs{\nabla p(x)}^2 + s \abs{q(x)}^2} dx\cr
\leq
C e^{Ds} \para{\norm{\vv}^2_{H^7(\omega\times (0, T))}+\norm{y (., t_0)}_{H^3(\Omega)}^{2}+\sum_{i=1}^{4}\norm{\dive\p_t^i\vv(\cdot,t_0)}_{H^3(\Omega)}^2}
+ Cs^2e^{2d_0s}M^2_0,
\end{multline*}
for all $s\geq s_*$.\\
%$  d= e^{\gamma\beta M}$
%We denote by
%$$\mathcal{F}_\omega(\vv) = \displaystyle\sum_{i=1}^{3}\norm{\vv^i}^2_{X(\omega_T)}$$
%and we assume that
%$$ s^2\displaystyle\sum_{i=1}^{3} \para{\norm{\vv^i}^2_{H^1(0,T;H^1(\Omega))} + \norm{y^i}_{L^2(Q)}^2}+ s  (\norm{\vv_t}_{H^1(0, T; H^1(\Omega))}^2 + \norm{y_t}_{L^2(Q)}^2) \leq C s^2 M_1^2$$
Using (\ref{2.7}) and (\ref{2.8}), we obtain
\begin{multline*}
e^{2s d }\int_\Omega\para{  \abs{p(x)}^2 + \abs{\nabla p(x)}^2 +  \abs{q(x)}^2}dx
\leq C e^{Ds} (\norm{\vv}^2_{H^7(\omega\times (0, T))} + M_{t_0}(\vv, y))+ C s^2 e^{2d_0s} M_0^2,
\end{multline*}
for all $s \geq s_*$.\\
Moreover,
\begin{multline}
 \int_\Omega\para{ \abs{p(x)}^2 + \abs{\nabla p(x)}^2 + \abs{q(x)}^2}dx\cr
\leq C e^{(D - 2 d)s} (\norm{\vv}^2_{H^7(\omega\times (0, T))}+  M_{t_0}(\vv, y))+ C e^{2(d_0- d)s} s^2 M_0^2,
\end{multline}
for all $s \geq s_*$.\\
Besides,
\begin{multline}
 \int_\Omega\para{\abs{p(x)}^2+\abs{\nabla p(x)}^2+\abs{\Delta p(x)}^2}dx\cr
\leq C e^{D_0 s} (\norm{\vv}^2_{H^7(\omega\times (0, T))}+ M_{t_0}(\vv, y))
+ C e^{-2(d- d_0)s} M_0^2,
\end{multline}
for all $s \geq s_*$.\\
Finally, minimizing the right hand side with respect to $s$, we obtain:  there exist $\delta\in (0, 1)$ such that
\begin{equation*}
 \norm{p}^2_{H^1(\Omega)} + \norm{q}^2_{L^2(\Omega)}
\leq C \para{\norm{\vv}^2_{H^7(\omega\times (0, T))}+ M_{t_0}(\vv, y)}^{\delta}.
\end{equation*}
The proof of Theorem \ref{T.2} is completed.
%%%%%%%%%%%%%%%%

%\let\clearpage

\end{document}